\newtheorem{theorem}{Theorem}[section]
\newtheorem{lemma}[theorem]{Lemma}
\newtheorem{definition}[theorem]{Definition}
\newtheorem{proposition}[theorem]{Proposition}
\newtheorem{remark}[theorem]{Remark}
\newtheorem{example}[theorem]{Example}
\newcommand{\tab}{\hspace*{2em}}
\numberwithin{equation}{section}
\DeclareMathOperator{\sinc}{sinc}
\DeclareMathOperator{\Arg}{Arg}
\def\P{{\cal P}}
\def\barP{\bar{{\cal P}}_d}
\def\F{{\cal F}}
\def\EO{E_{\epsilon,\Omega}(F)}
\def\EOR{E_{R,\Omega}(F)}
\def\EG{E_{\epsilon,(\lambda)}(F)} 
\def\EGR{E_{R,(\lambda)}(F)} 
\def\R{\mathbb{R}}
\def\C{\mathbb{C}}
\def\Z{\mathbb{Z}}
\def\NN{\mathbb{N}}
\def\FMG{FM_{\lambda}}
\def\gs{\lambda^{*}}
\def\EGRS{E_{R,(\lambda^*)}(F)}
\def\SRF{\textrm{SRF}}
\renewcommand{\vec}[1]{\boldsymbol{\mathrm{#1}}}
\newcommand{\xv}{\vec{x}}
\newcommand{\av}{\vec{a}}
\newcommand{\lv}{\vec{\ell}}
\newcommand{\mm}{{\cal E}} % minimax
\newcommand{\est}{\frak{f}} % estimator
\newcommand{\ecl}{\frak{F}} % estimator class
\DeclareRobustCommand*\cal{\@fontswitch\relax\mathcal}
\begin{document}
\title[Super-resolution of near-colliding point
sources]{Super-resolution of near-colliding point sources}

\author[D.Batenkov]{Dmitry Batenkov} \address{Department of Applied
  Mathematics, School of Mathematical Sciences, Tel-Aviv University,
  P.O. Box 39040, Tel-Aviv 6997801, Israel}
\email{dbatenkov@tauex.tau.ac.il} \thanks{}

\author[G. Goldman]{Gil Goldman} \address{Department of Mathematics,
  The Weizmann Institute of Science, Rehovot 76100, Israel}
\email{gil.goldman@weizmann.ac.il} \thanks{}

\author[Y. Yomdin]{Yosef Yomdin} \address{Department of Mathematics,
  The Weizmann Institute of Science, Rehovot 76100, Israel}
\email{yosef.yomdin@weizmann.ac.il}

\subjclass[2010]{Primary 65H10, 94A12, 65J22.}  \keywords{Signal
  reconstruction, spike-trains, Fourier transform, Prony systems,
  sparsity, super-resolution.}  \date{}

\begin{abstract}
  We consider the problem of stable recovery of sparse signals of the
  form
  $$F(x)=\sum_{j=1}^d a_j\delta(x-x_j),\quad x_j\in\mathbb{R},\;a_j\in\mathbb{C}, $$
  from their spectral measurements, known in a bandwidth $\Omega$ with
  absolute error not exceeding $\epsilon>0$.  We consider the case
  when at most $p\le d$ nodes $\{x_j\}$ of $F$ form a cluster whose
  extent is smaller than the Rayleigh limit ${1\over\Omega}$, while
  the rest of the nodes are well separated. Provided that
  $\epsilon \lessapprox \SRF^{-2p+1}$, where $\SRF=(\Omega\Delta)^{-1}$
  and $\Delta$ is the minimal separation between the nodes, we show
  that the minimax error rate for reconstruction of the cluster nodes
  is of order ${1\over\Omega}\SRF^{2p-1}\epsilon$, while for recovering
  the corresponding amplitudes $\{a_j\}$ the rate is of the order
  $\SRF^{2p-1}\epsilon$. Moreover, the corresponding minimax rates for
  the recovery of the non-clustered nodes and amplitudes are
  ${\epsilon\over\Omega}$ and $\epsilon$, respectively. These results
  suggest that stable super-resolution is possible in much more
  general situations than previously thought. Our numerical
  experiments show that the well-known Matrix Pencil method achieves
  the above accuracy bounds.
\end{abstract}

\maketitle

\section{Introduction}\label{sec.intro}
\subsection{Super-resolution of sparse signals}
The problem of mathematical super-resolution (SR) is to extract the
fine details of a signal from band-limited and noisy measurements of
its Fourier transform \cite{lindberg_mathematical_2012}. It is an
inverse problem of great theoretical and practical interest.

The specifics of SR highly depend on the type of prior information
assumed about the signal structure. Many theoretical and practical
studies assume \emph{signals of compact support}, in which case the SR
problem is equivalent to analytic continuation (equivalently,
extrapolation) of the Fourier transform. However, it can be shown that
the spectrum of a compactly supported function can be extrapolated
from samples of accuracy $\epsilon$ by a factor which scales at most
logarithmically with the signal-to-noise ratio ${1\over\epsilon}$, see
e.g. \cite{goodman_introduction_2005,lindberg_mathematical_2012,batenkov2019,bertero1998}
and references therein. On the other hand, in recent years
considerable progress has been made in studying SR for \emph{sparse
  signals}, which are frequently modelled as idealized {\it
  spike-trains}
\begin{equation}\label{eq.spike.train.signal}
  F(x)=\sum_{j=1}^d a_j\delta(x-x_j), \;x_j\in\R,
\end{equation}
where $\delta$ is the ubiquitous Dirac's $\delta$-distribution. This
particular type of signals is widely used in the literature, as it is
believed to capture the essential difficulty of SR with sparse priors,
see e.g. \cite{donoho1992superresolution,candes2014towards}.

Let ${\cal{F}}\left(F\right)$ denote the Fourier transform of $F$:
\begin{equation}\label{eq.fourier.tr}
{\cal F}(F)(s) = \int_{-\infty}^\infty F(x)e^{-2 \pi i s x}dx.
\end{equation}
Further suppose that the spectral data is given as a function $\Phi$
satisfying, for some $\epsilon>0$ and $\Omega>0$,
\begin{equation}\label{eq:noise-condition}
  \left| \Phi(s) - {\cal F}(F)(s) \right| \le \epsilon,\quad s\in[-\Omega,\Omega].
\end{equation}
The sparse SR problem reads as follows: given $\Phi$ as above,
estimate the unknown parameters of $F$, namely, the {\it amplitudes}
$\{a_j\}$ and the {\it nodes} $\{x_j\}$.

If $\epsilon=0$, the problem can be solved exactly by a variety of
parametric methods (Prony's method etc., see
e.g. \cite{prony1795essai,stoica_spectral_2005} and Subsection
\ref{sub:clustering-intro} below). For $\epsilon>0$, if $\est$ is any
reconstruction algorithm receiving $\Phi$ as an input, and producing
an estimate $F'=\est\left(\Phi\right)$ of the signal which satisfies
\eqref{eq:noise-condition}, then, under an appropriate definition of
the distance $\|F-F'\|$, it is of great interest to have a good
estimate of the noise amplification factor (or the \emph{problem
  condition number}) ${\cal K}$ such that
\begin{equation}\label{eq:noise.amplification}
  \|F-F'\| \approx {\cal K} \epsilon.
\end{equation}

\subsection{Rayleigh limit and minimal separation}
\label{sub:clustering-intro}

It has been well-established that the difficulty of sparse SR is
directly related to the \emph{minimal separation}
$\Delta=\min_{1\leq i<j\leq d} |x_i-x_j|$, or, more precisely, to the
relationship between $\Delta$ and $\Omega$.

Without any a-priori information, the best attainable resolution from
spectral data of bandwidth $\Omega$ is of the order
$\frac{1}{\Omega}$, which is also known as the \emph{Rayleigh
  limit}. Both classical methods of non-parametric spectral estimation
\cite{stoica_spectral_2005}, as well as modern convex optimization
based methods solve the problem under some sort of a separation
condition of the form $\Delta \geq {c\over\Omega}$
\cite{candes2014towards,candes2013super,fernandez2016super,heckel2016super,fernandez2013support,benedetto_super-resolution_2016,azais_spike_2015,bhaskar_atomic_2013,schiebinger_superresolution_2015,tang_near_2015},
and moreover these methods are generally considered to be stable.
  
\begin{figure}
  \centering
  \includegraphics[width=0.9\linewidth]{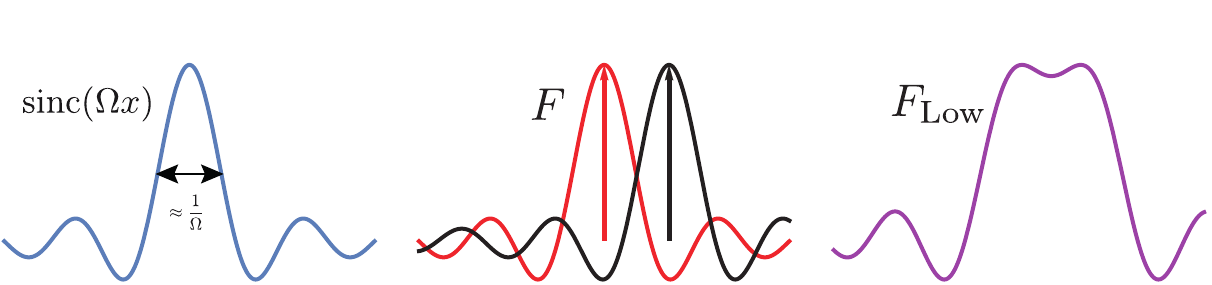}
  \captionsetup{singlelinecheck=off,font=small,width=\linewidth}
          \caption{\small The Rayleigh limit. For a signal
            $F(x) = \sum_{j} a_j \delta(x-x_j)$, its low resolution
            version is given by
            $$F_{\text{Low}}(x) = {\cal F}^{-1}\left({\cal
                  F}\left(F\right) \cdot
                \chi_{\left[-\Omega,\Omega\right]}\right) \asymp
              \sum_j a_j \sinc (\Omega (x-x_j)).$$ $F_{\text{Low}}(x)$
            will have peaks of width $\approx {1\over\Omega}$, and
            therefore it will be increasingly difficult to recover
            signals for which the minimal separation between the
            $\{x_j\}$'s is much smaller than ${1\over\Omega}$.}
  \label{fig:rayleigh}
\end{figure}
On the other hand, the case $\Delta \ll {1\over\Omega}$ (and arbitrary
signed/complex amplitudes $\{a_j\}$) is much more difficult (see
Figure \ref{fig:rayleigh}).  

The sparse SR problem has appeared
already in the work by R. Prony \cite{prony1795essai}, where he
devised an algebraic scheme to recover the parameters $\{x_j,a_j\}$
from $2d$ equispaced measurements of ${\cal F}(F)$, assuming $F$ is
given by \eqref{eq.spike.train.signal}, and for arbitrary $\Delta>0$
and $|a_j|>0$ (see Proposition \ref{prop.prony.rec.rel} below). Since
then, Prony's method and its various extensions and generalizations
have been used extensively in applied and pure mathematics and
engineering
(\cite{auton1981investigation,stoica_spectral_2005,pereyra_exponential_2010,peter2013generalized,plonka2014prony,vetterli2002sampling}
and references therein). While these methods provide exact recovery
for $\epsilon=0$, the question of their stability (the magnitude of
${\cal K}$ in \eqref{eq:noise.amplification}) becomes of essential
interest. For instance, if it so happens that an estimate
$F'=\sum_{j=1}^d a_j' \delta(x-x_j')$ satisfies
$\min_{1\leq j\leq d}|x_j'-x_j|\gtrapprox \Delta$, then such $F'$ may
be of little practical use in many applications (because the inner
structure of the sparse signal will be determined incorrectly).

\smallskip

The first work which examined the stability of SR in the sub-Rayleigh
regime was by D.Donoho \cite{donoho1992superresolution}. The signal
$F$ was assumed to have an infinite number of spikes $\{x_j\}$,
constrained to a grid of step size $\Delta$, with less than one spike
per unit interval on average, but whose local complexity was
constrained to have no more than $d$ spikes per any interval of length
$d$ (such $d$ is called the {\it Rayleigh index}). It was shown that the
worst-case $\ell_2$ error of such $F$ (i.e. the $\ell_2$ norm of the
coefficient sequence of the difference) from continuous measurements
with a band-limit $\Omega$ and perturbation of size $\epsilon$ (in
$L_2$ sense) scales like $\SRF^{\alpha}\epsilon$, where
$\SRF=\frac{1}{\Omega\Delta}>1$ is the so-called
\emph{super-resolution factor}, and $\alpha$ satisfies
$2d-1 \leq \alpha \leq 2d+1$. In \cite{demanet2015recoverability} the
authors considered the case of $d$-sparse signals supported on a grid,
and showed that the correct exponent should be $\alpha=2d-1$ in this
case. In another recent work \cite{li_stable_2017} the same scaling
was shown to hold in the case of $d$-sparse signals and discrete
Fourier measurements.

In the papers mentioned above, the error rate $\SRF^{2d-1}\epsilon$ is
\emph{minimax}, meaning that on one hand, it is attained by a certain
algorithm for all signals of interest, and on the other hand, there
exist worst-case examples for which no algorithm can achieve an
essentially smaller error. It turns out that these worst-case signals
all have the structure of a cluster, where all the $d$ nodes
$\{x_j\}$ appear consecutively, i.e.
$x_j = x_1 + (j-1)\Delta,\;j=1,\dots,d$. A natural question which
arises is: \emph{if it is a-priori known that only a subset of the $d$
  spikes can become clustered, can we have better reconstruction
  accuracy?} In this paper we shall provide a positive answer to this
question.

\subsection{Main contributions}
\label{sub:contributions}

In this paper we consider the case where the nodes $\{x_j\}$ can take
arbitrary real values (the so-called {\it off-grid} setting), while
the amplitudes $\{a_j\}$ can be arbitrary complex scalars. We further
assume that exactly $p$ nodes, $x_{\kappa},\ldots,x_{\kappa+p-1}$,
form a small cluster of extent $h\ll \frac{1}{\Omega}$ and are
approximately uniformly distributed inside the cluster, while the rest
of the nodes are well-separated from the cluster and from each other
(see Definition \ref{def.uniform.cluster} below). The approximate
uniformity is expressed by the assumption that the minimal separation
between any two cluster nodes is bounded from below by $\Delta=\tau h$
for some fixed $0<\tau\leq 1$. Under these {\it $p$-clustered}
assumptions, we show in Theorem \ref{thm.minmax.bounds}, that 
for small enough $\epsilon$ -- and, in particular, for
$\epsilon \lessapprox \left(\Omega \Delta\right)^{2p-1}$, the 
worst case error rates of a minimax reconstruction algorithm (see Definition \ref{def.minmax} below), 
receiving $\Phi$ satisfying \eqref{eq:noise-condition} as an input, and  
returning an estimate $x_j'=x_j'(\Phi)$, $a_j'=a_j'(\Phi)$, satisfy \footnote{We use the symbol $\asymp$ to
  denote order equivalence, up to constants: $A(t)\asymp B(t)$, if and
  only if there exist positive constants $c_1,c_2$ (depending on the
  specified parameters) such that $c_1B(t)\leq A(t) \leq c_2B(t)$ for
  all specified values of $t$.}
\begin{enumerate}
  \item Non-cluster nodes:
	\begin{align*}
		 \max_{j \notin \{\kappa,\dots,\kappa+p-1\}}  |x_j-x_j'| &\asymp \frac{\epsilon}{\Omega},\\
		  \max_{j \notin \{\kappa,\dots,\kappa+p-1\}}  |a_j-a_j'| &\asymp \epsilon.
	\end{align*}
  \item Cluster nodes:
  	\begin{align*}
		 \max_{j \in \{\kappa,\dots,\kappa+p-1\}}  |x_j-x_j'| &\asymp \frac{\epsilon}{\Omega} \left(\Omega \Delta\right)^{-2p+2},\\
		  \max_{j \in \{\kappa,\dots,\kappa+p-1\}}  |a_j-a_j'| &\asymp \epsilon \left(\Omega \Delta\right)^{-2p+1}.
	\end{align*}
\end{enumerate}

The constants appearing in our bounds depend on $p,d$, a-priori bounds
on the magnitudes $|a_j|$, and additional geometric parameters, but
neither on $\Delta$ nor on $\Omega$.

Our results indicate, in particular, that the non-clustered nodes
$\{x_j\}_{j\notin\left[\kappa,\dots,\kappa+p-1\right]}$ can be
recovered with much better accuracy than the cluster nodes. Let the
super-resolution factor be defined, as before, by
$\SRF=\left(\Omega \Delta\right)^{-1}$, then the condition number of the
cluster nodes scales like $\SRF^{2p-1}$ in the super-resolution regime
$\SRF\gg 1$, while the condition number of the non-cluster nodes does
not depend on the $\SRF$ at all.

Our approach is to reduce the continuous measurements problem to a
certain ``Prony-type'' system of $2d$ nonlinear equations, given by
equispaced measurements of $\Phi(s)$ with a carefully chosen spacing
$\lambda \approx \Omega$, and analyze the sensitivity of
this system to perturbations. The proofs involve techniques from
quantitative singularity theory and numerical analysis. Some of the
tools, in particular the ``decimation-and-blowup'' technique, were
previously developed in
\cite{akinshin2017error,batenkov2017accurate,paramertrization2017,batenkov2016stability,akinshin2015accuracy,batenkov2013geometry,batenkov2013accuracy,batenkov2018}. The
single-cluster case $p=d$ has been first analyzed in
\cite{batenkov2016stability}, while the lower bound (in a slightly
less general formulation) has been essentially shown in
\cite{akinshin2015accuracy}. One of the main technical results, Lemma
\ref{lemma.uniform.blowup}, has been first proven in
\cite{batenkov2018}.

Our numerical experiments in Section~\ref{sec:numerics} show that the
above bounds are attained by Matrix Pencil (MP), a well-known
high-resolution algorithm \cite{hua_svd_1991,hua_matrix_1990}.

\subsection{Related work and discussion}\label{sub:related-work}

Our main results generalize several previously available bounds for
both on-grid and off-grid SR
\cite{demanet2015recoverability,li_stable_2017,batenkov2016stability},
replacing the overall sparsity $d$ with the ``local'' sparsity
$p$\footnote{Our clustering model is distinct from Donoho's model of
  {\it sparse clumps} on a grid \cite{donoho1992superresolution}, and
  so the two results cannot be compared directly.}.  Compared with
previous works, we also have an explicit control of the perturbation
$\epsilon$ for which the stability bounds hold:
$\epsilon \leq C\cdot\left(\Omega \Delta\right)^{2p-1}$. So, given $F$
satisfying the clustering assumptions and $\Omega$, we can choose
$\epsilon=c\left(\Omega \Delta\right)^{2p-1}$ such that $F$ can be
accurately resolved, and $c$ does not depend on $\Omega,\Delta$. But
this also means that given $\epsilon>0$, we can choose $\Delta_0$ and
$\Omega_0$ such that
$\left(\Omega_0 \Delta_0\right)^{2p-1} \ge {\epsilon\over c}$, and for
any $F$ satisfying the clustering assumptions with $\Delta = \Delta_0$
and $\Omega = \Omega_0$, the SR problem can be accurately solved.
Therefore, fixing $\epsilon$, our results show that accurate recovery
is possible for all $\SRF$ values up to
$\left({1\over\epsilon}\right)^{1\over{2p-1}}$ (but possibly also for
higher values of $\SRF$). On the other hand, a similar argument using
the lower bounds for the minimax error shows that with perturbation
of magnitude $\epsilon$, no algorithm can resolve signals having a
cluster of size $p$ and separation
$\Delta_{\epsilon} \lessapprox {1\over\Omega}\epsilon^{1\over{2p-1}}$,
giving an upper bound for the attainable $\SRF$ values exactly
matching the lower bound above. To summarize, we obtain {\it the best possible 
scaling of the attainable resolution with clustered sparsity $p$ and
absolute perturbation $\epsilon$:}
\begin{equation}
  \label{eq:srf-bounds-ultimate}
  \SRF \asymp \sqrt[2p-1]{1\over\epsilon}.
\end{equation}

This H\"{o}lder-type scaling is much more favorable compared to
 SR by analytic continuation under the
prior of compact signal support, where the bandwidth extrapolation
factor scales only as a fractional power of $\log{1\over\epsilon}$,
see e.g. \cite{batenkov2019} and references therein. Also note that
the sparse SR problem enjoys linear  stability in $\epsilon$ 
\eqref{eq:noise.amplification}, whereas analytic continuation exhibits
stability of the form $Error \approx \epsilon^{\gamma}$, where
$\gamma < 1$ \cite{bertero1998,batenkov2019}.

Stable SR in the on-grid setting of
\cite{demanet2015recoverability,donoho1992superresolution,li_stable_2017}
is closely related to the smallest singular value of a certain class
of Fourier-type matrices. Using the decimation technique (see also
\cite{cuyt2018a,cuyt2018}), in a recent paper \cite{batenkov2018} we
have derived novel estimates\footnote{Estimates for the smallest
  singular value were independently obtained in \cite{li_stable_2017}
  giving same asymptotic order but better absolute constants. In
  \cite{batenkov2019spectral} we have obtained optimal scalings of all
the singular values by different techniques.} for
this quantity under the partial clustering setting (compare with
\cite{aubel_vandermonde_2017,moitra_super-resolution_2015,bazan_conditioning_2000,ferreira_superresolution_1999,kunis2018}),
and using these results, we have shown in the same paper that the
asymptotic scaling of the condition number for on-grid SR in this
regime is $\SRF^{2p-1}$, matching the off-grid setting of the present
paper.

The question of providing rigorous performance guarantees for
high-resolution algorithms such as MP, MUSIC, ESPRIT and others, in
the super-resolution regime $\SRF>1$, is of current interest.  In two
very recent works, \cite{li2019,li_stable_2017}, the authors derive
stability estimates for MUSIC and ESPRIT algorithms under similar
clustering assumptions, finite sampling and white Gaussian
perturbation model. Their results suggest that the corresponding
noise amplification factors ${\cal K}$ for the nodes are of the order
$\SRF^{2p-2}$ with high probability. During the review of the present
paper, the authors of \cite{li2019} established near-optimality
of ESPRIT in the bounded noise model. In particular, they showed that
ESPRIT is optimal up to a factor of $1/\Omega$, i.e.
$|x_j-\tilde{x}_j| \lessapprox (\Omega \Delta)^{-2p-2} \varepsilon$ with
discrete Fourier measurements, however, requiring
$\varepsilon \lessapprox (\Omega \Delta)^{4p-3}/\Omega$. We also mention
\cite{beckermann2007,golub1999}, where the connection between
perturbation of (square) matrix pencil eigenvalues and the a-priori
distribution of these eigenvalues was established via potential
theory. It will be interesting to investigate the possibility to
applying these methods to the analysis of MP in the clustered setting.

Turning to other techniques, the special case of a single cluster can
be solved with optimal accuracy by polynomial homotopy methods, as
described in \cite{batenkov2017accurate}, however in order to
generalize this algorithm to configurations with non-cluster nodes, we
need to know the optimal decimation parameter $\lambda$. Nonlinear
least-squares and related methods (e.g., Variable Projections
\cite{golub1973,oleary2013}) apparently provide an optimal recovery
rate, however they generally require very accurate initialization. We
hope that our methods may help in analyzing these techniques as well,
and plan to pursue this line of research in the future. For the case
of positive point sources, stability rate $\SRF^{2p}$ has been
established for convex optimization techniques in
\cite{morgenshtern2016super}, see also a related preprint
\cite{denoyelle_support_2015}.

\subsection{Organization of the paper}
In Section \ref{sec:main-results} we provide the necessary definitions
and formulate the main results. In Section \ref{sec:numerics} we
present several numerical experiments confirming the optimality of the
Matrix Pencil algorithm. The proof of Theorem
\ref{thm.accuracy.bounds.upper} (upper bound) is presented in Section
\ref{sec.upper.bound}.  The proof of Theorem
\ref{thm.accuracy.bounds.lower} (lower bound) is given in Section
\ref{sec:lower-bound}.

\subsection{Acknowledgements}
\label{sub:acks}
The research of GG and YY is supported in part by the Minerva
Foundation. DB is supported in part by AFOSR grant FA9550-17-1-0316,
NSF grant DMS-1255203, and a grant from the MIT-Skolkovo initiative.

\section{Minimax bounds for clustered super-resolution}
\label{sec:main-results}
\subsection{Notation and preliminaries}\label{subsection.notation}

We shall denote by $\P_d$ the parameter space of
signals $F$ with complex amplitudes and real, pairwise distinct and
ordered nodes,
$$ {\cal P}_d = \left\{(\av,\xv) : \av=(a_1,\ldots,a_d)\in {\mathbb
    C}^{d},\; \xv = (x_1,\ldots,x_d)\in \R^d , \ x_1<x_2<\ldots<x_d \right\},
$$
and identify signals $F$ with their parameters
$(\av,\xv)\in \P_d.$ In particular, this induces a structure of a
linear space on ${\cal P}_d$. Throughout this text we will always use
the maximum norm $\|\cdot\|=\|\cdot\|_{\infty}$ on $\C^d,\;\R^d$ and  ${\cal P}_d$, where for $F=(\av,\xv) \in \P_d$
$$
	\|F\| =\max \big(\|\av\|_{\infty},\|\xv\|_{\infty}\big).
$$

We shall denote the orthogonal coordinate projections of a signal $F$
to the $j$-th node and $j$-th amplitude, respectively, by
$P_{\xv,j}: \P_d \to \mathbb{R}$ and $P_{\av,j}:\P_d \to
\mathbb{C}$. We shall also denote the $j$-th component of a vector
$\vec{v}$ by $\vec{v}_j$.

Let $L_{\infty}[-\Omega,\Omega]$ denote the space of bounded
complex-valued functions defined on $[-\Omega,\Omega]$ with the norm
$\|e\|=\max_{|s|\leq\Omega}\left|e(s)\right|$.

\begin{definition}\label{def.reconstruction.algo}
  Given $\Omega>0$ and $U\subseteq \P_d$, we denote by
  $\ecl(\Omega,U)$ the class of all admissible
  reconstruction algorithms, i.e.
  $$
  \ecl(\Omega,U)=\biggl\{\est:L_{\infty}\left[-\Omega,\Omega\right] \to
    U\biggr\}.
  $$
\end{definition}

\begin{definition}\label{def.minmax}
Let $U\subset \P_d$. We consider the minimax error rate in estimating
a signal $F \in U$ \footnote{To ensure the minimax error rate is
  finite, depending on the noise level, we impose constraints on
  $U \subset \P_d$, namely lower and upper bounds on the magnitude of
  the amplitudes and the separation of the nodes. We will specify
  these constraints exactly in the statements of the accuracy bounds.}
from $\Omega$-bandlimited data as in \eqref{eq:noise-condition},
with measurement error $\epsilon>0$:
$$\mm(\epsilon, U,\Omega) = \inf_{\est\in\ecl(\Omega,U)} \ \sup_{F \in U} \ \sup_{\|e\|\leq\epsilon} \|F-\est\left({\cal F}(F)+e\right)\|.
$$

Similarly the minimax errors of estimating the individual nodes,
respectively, the amplitudes of $F\in U$ are defined by
\begin{align*}
  \mm^{\xv,j}(\epsilon, U, \Omega) &= \inf_{\est\in\ecl(\Omega,U)} \ \sup_{F\in U} \ \sup_{\|e\|\le \epsilon} \left|P_{\xv,j}(F)-P_{\xv,j}\left(\est\left({\cal F}(F)+e\right)\right)\right|,\\
  \mm^{\av,j}(\epsilon, U,\Omega) &= \inf_{\est\in\ecl(\Omega,U)} \ \sup_{F \in U} \ \sup_{\|e\|\le \epsilon} \left|P_{\av,j}(F)-P_{\av,j}\left(\est({\cal F}(F)+e)\right)\right|.
\end{align*}
\end{definition}

Let a signal $F \in \P_d$  be fixed. We define the
$\epsilon$-error set $\EO$ as the following pre-image.

\begin{definition}\label{def.error.set} The error set
  $E_{\epsilon,\Omega}(F) \subset \P_d$ is the set
  consisting of all the signals $F'\in \P_d$ with
  \begin{align*} \left|\F(F')(s)-\F(F)(s) \right|\le \epsilon, & &
                                                                   s\in[-\Omega,\Omega].
  \end{align*}
\end{definition}

We will denote by $E^{\xv,j}_{\epsilon}(F)=E^{\xv,j}_{\epsilon,\Omega}(F)$
and $E^{\av,j}_{\epsilon}(F)=E^{\av,j}_{\epsilon,\Omega}(F)$ the
projections of the error set onto the individual nodes and the amplitudes
components, respectively:
\begin{align}\label{eq:individ.err.set.def}
  \begin{split}
  E^{\xv,j}_{\epsilon,\Omega}(F)&= \left\{ \xv_j'\in{\mathbb R}: \left(\av',\xv'\right)\in E_{\epsilon,\Omega}(F) \right\} \equiv P_{\xv,j} E_{\epsilon,\Omega}(F), \\
  E^{\av,j}_{\epsilon,\Omega}(F)&= \left\{ \av_j'\in{\mathbb C}: \left(\av',\xv'\right)\in E_{\epsilon,\Omega}(F) \right\}\equiv P_{\av,j} E_{\epsilon,\Omega}(F).
\end{split}
\end{align}

\smallskip

For any subset $V$ of a normed vector space with norm $\|\cdot\|$, the diameter of $V$ is
$$
	diam(V) = \sup_{\vec{v}',\vec{v}'' \in V} \|\vec{v}'-\vec{v}''\|.
$$
The minimax errors are directly linked to the diameter of the
corresponding projections of the error set by the following easy
computation, which is standard in the theory of optimal recovery
\cite{micchelli_lectures_1985,micchelli_survey_1977,micchelli_optimal_1976}
(see also
\cite{donoho1992superresolution,demanet2015recoverability,li_stable_2017}).

\begin{proposition}\label{prop.minimax} For $U \subset \P_d$,
  $\Omega>0$, $1\leq j \leq d$ and $\epsilon > 0$ we have
\begin{align}
    \label{eq:minimax-equiv-error-set}
    \frac{1}{2} \sup_{F:\ E_{\frac{1}{2}\epsilon,\Omega}(F) \subseteq U}diam\big(E_{\frac{1}{2}\epsilon,\Omega}(F)\big) 
    &\le \mm(\epsilon, U,\Omega) \le \sup_{F\in U }
                                     diam\big(E_{2\epsilon,\Omega}(F))\\
    \frac{1}{2} \sup_{F:\ E_{\frac{1}{2}\epsilon,\Omega}(F) \subseteq U}diam\big(E^{\xv,j}_{\frac{1}{2}\epsilon,\Omega}(F)\big) 
    &\le \mm^{\xv,j}(\epsilon, U,\Omega) \le \sup_{F\in U } diam\big(E^{\xv,j}_{2\epsilon,\Omega}(F)) \\
    \frac{1}{2} \sup_{F:\ E_{\frac{1}{2}\epsilon,\Omega}(F) \subseteq U}diam\big(E^{\av,j}_{\frac{1}{2}\epsilon,\Omega}(F)\big) 
    &\le \mm^{\av,j}(\epsilon, U,\Omega) \le \sup_{F\in U } diam\big(E^{\av,j}_{2\epsilon,\Omega}(F))
  \end{align}
\end{proposition}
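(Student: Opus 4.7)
The proposition is a routine instance of the classical optimal recovery dichotomy (two-point lower bound versus canonical-algorithm upper bound), so my plan is simply to execute both sides carefully, first for the full signal norm and then to observe that the projected statements follow by the same argument after replacing $\|\cdot\|$ by the $1$-Lipschitz coordinate maps $P_{\xv,j}$ and $P_{\av,j}$.

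For the upper bound, I would construct a specific admissible reconstruction algorithm $\est \in \ecl(\Omega,U)$ by selecting, for each input $\Phi \in L_{\infty}[-\Omega,\Omega]$, some $F' \in U$ satisfying $\|\F(F')-\Phi\|_{\infty} \leq \epsilon$ whenever such an $F'$ exists (and an arbitrary element of $U$ otherwise). The feasible set is nonempty on the relevant inputs because for any $F\in U$ and $\|e\|\le\epsilon$, the choice $F'=F$ certifies $\|\F(F)-(\F(F)+e)\|_{\infty}\le\epsilon$. Then for the resulting $F'=\est(\F(F)+e)$ the triangle inequality gives $\|\F(F')-\F(F)\|_{\infty}\le \|\F(F')-(\F(F)+e)\|_{\infty}+\|e\|_{\infty}\le 2\epsilon$, so both $F$ and $F'$ lie in $E_{2\epsilon,\Omega}(F)$, whence $\|F-F'\|\le \operatorname{diam}(E_{2\epsilon,\Omega}(F))$. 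Taking the supremum over $F\in U$ and $\|e\|\le\epsilon$ gives the right-hand inequality.

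For the lower bound I would use the two-point argument. Fix any $F\in\P_d$ with $E_{\frac{1}{2}\epsilon,\Omega}(F)\subseteq U$ and pick any $F',F''\in E_{\frac{1}{2}\epsilon,\Omega}(F)$; both lie in $U$. Setting $\Phi:=\F(F)$, we have $\Phi=\F(F')+e'=\F(F'')+e''$ with $\|e'\|_{\infty},\|e''\|_{\infty}\le \tfrac{1}{2}\epsilon\le\epsilon$, so an arbitrary algorithm $\est\in\ecl(\Omega,U)$ must return the same element $\est(\Phi)\in U$ in both scenarios. The triangle inequality then yields
\[
  \|F'-\est(\Phi)\|+\|F''-\est(\Phi)\|\ \geq\ \|F'-F''\|,
\]
so at least one of the two terms is $\geq \tfrac{1}{2}\|F'-F''\|$. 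Taking the supremum over $F',F''\in E_{\frac{1}{2}\epsilon,\Omega}(F)$, then over admissible $F$, and finally noting this lower bound holds for every $\est$, gives the left-hand inequality.

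For the two projected statements I would rerun exactly the same arguments under the coordinate maps $P_{\xv,j}$ and $P_{\av,j}$, which are $1$-Lipschitz with respect to $\|\cdot\|_\infty$ on $\P_d$ and satisfy $P_{\xv,j}(E_{r,\Omega}(F))=E^{\xv,j}_{r,\Omega}(F)$, $P_{\av,j}(E_{r,\Omega}(F))=E^{\av,j}_{r,\Omega}(F)$ by \eqref{eq:individ.err.set.def}; the upper bound becomes $|P_{\xv,j}(F)-P_{\xv,j}(F')|\le \operatorname{diam}(E^{\xv,j}_{2\epsilon,\Omega}(F))$ (and similarly for amplitudes), while in the lower bound the same two-point construction bounds the minimax component error from below by $\tfrac{1}{2}|P_{\xv,j}(F')-P_{\xv,j}(F'')|$. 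I do not anticipate any genuine obstacle here: the only mild care needed is to guarantee the selection $\est(\Phi)\in U$ in the upper bound (handled by nonemptiness of the feasible set for relevant inputs) and the inclusion $E_{\frac{1}{2}\epsilon,\Omega}(F)\subseteq U$ in the lower bound (imposed explicitly in the supremum on the left-hand side).
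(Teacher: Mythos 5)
Your proof is correct and takes essentially the same approach as the paper: an oracle/selection estimator plus the triangle inequality for the upper bound, and the standard two-point indistinguishability argument for the lower bound, with the projected statements handled identically via the defining identity $P_{\xv,j}(E_{r,\Omega}(F))=E^{\xv,j}_{r,\Omega}(F)$. The only cosmetic difference is that the paper picks a near-extremal pair $F^1,F^2$ with $\|F^1-F^2\|=\operatorname{diam}(E_{\epsilon/2,\Omega}(F))-\xi$ and sends $\xi\to 0$, whereas you take the supremum over pairs directly; both are valid.
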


\begin{proof} We shall prove \eqref{eq:minimax-equiv-error-set}, the
  proof in the other cases is identical. We omit $\Omega$ from the
  following to reduce clutter.
  \begin{description}
  \item[Upper bound] Let $\epsilon > 0$. For any
    $\Phi\in L_{\infty}[-\Omega,\Omega]$, let
    \begin{align*}
      {\cal B}\left(\epsilon,\Phi\right)&=\left\{F\in U: \|{\cal
                                          F}(F)-\Phi\|\leq\epsilon\right\}.
    \end{align*}
    Consider an oracle estimator $\est_{\epsilon}\in\ecl(\Omega,U)$
    defined as
  $$
  \est_{\epsilon}(\Phi)=\begin{cases}
    \text{any element of } {\cal B}(\epsilon,\Phi) & \text{if } {\cal B}(\epsilon,\Phi)\neq\emptyset, \\
    F_0 & \text{else},
  \end{cases}
  $$
  where $F_0$ is an arbitrary element of $U$. Now let $F\in U$, and
  $\Phi={\cal F}(F)+e$ where $\|e\|\leq\epsilon$. Then by definition
  $F\in{\cal B}(\epsilon,\Phi)$.  Put $F'=\est_{\epsilon}(\Phi)$,
  thus $\left\|{\cal F}(F') - \Phi\right\| \le \epsilon$, and
  therefore
  $$
  \|{\cal F}(F')-{\cal F}(F)\|\leq \|{\cal F}(F')-\Phi\|+\|\Phi-{\cal
    F}(F)\|=2\epsilon.
  $$
  We conclude that $F' \in E_{2\epsilon}(F)$, and consequently
  $\mm(\epsilon,U,\Omega)\leq\|F-F'\| \le diam\big(E_{2\epsilon}(F)\big)$.

\item[Lower bound] For the lower bound, let $F\in U$ such that
  $E_{\frac{1}{2}\epsilon}(F) \subseteq U$. Let $\xi>0$ small enough
  be fixed. There exist $F^1,\ F^2 \in E_{\frac{1}{2}\epsilon}(F)$
  with $\|F^1 -F^2\| = diam\big(E_{\frac{1}{2}\epsilon}(F)\big)-\xi$.
  Let $\Phi = {\cal F}(F)$, and let $F'=\est(\Phi)$ be the output of a
  certain estimator $\est$ corresponding to the input $\Phi$. We have
  $\big\|\Phi - {\cal F}(F^1)\big\|,\ \big\|\Phi - {\cal F}(F^2)\big\|
  \le \epsilon$.  Consequently, there exist perturbation functions
  $e_1,e_2$ satisfying $\|e_1\|,\|e_2\|\leq\epsilon$, while also
  \begin{align*}
    {\cal F}(F')=\Phi&={\cal F}(F^1)+e_1 = {\cal
      F}(F^2)+e_2.
  \end{align*}
  By definition of the minimax error we therefore have
  \begin{align*}
    \mm\left(\epsilon,U,\Omega\right) &= \inf_{\est}\sup_{\|e\|<\epsilon,F\in U}\|F-\est({\cal F}(F)+e)\| \\
    &\geq\inf_{\est}\max\left(\|F^1-F'\|,\|F^2-F'\|\right) \\
    &\geq \inf_{\est}{1\over 2}\left\{\|F^1-F'\|+\|F^2-F'\|\right\} \\
    &\geq {1\over 2}\|F^1-F^2\| \\
    &={1\over 2}diam\left(E_{{1\over 2}\epsilon}(F)\right)-{\xi\over 2}.
  \end{align*}
  The lower bound follows by letting $\xi\to 0$.
\end{description}
\end{proof}

\subsection{Uniform estimates of minimax error for clustered configurations}
The main goal of this paper is to estimate
$\mm\left(\epsilon,U,\Omega\right)$ (in fact its component-wise
analogues $\mm^{\xv,j}\left(\epsilon,U,\Omega\right)$ and
$\mm^{\av,j}\left(\epsilon,U,\Omega\right)$) where
$U\subset {\cal P}_d$ are certain compact subsets of ${\cal P}_d$
containing signals with $p\leq d$ nodes forming a small, approximately
uniform, cluster. In order to have explicit bounds, we describe such
sets $U$ by additional parameters $T,h,\tau,\eta,m,M$ as follows.

\begin{definition}[Uniform cluster configuration, Figure \ref{fig:cluster-conf}]\label{def.uniform.cluster}
  Given $0 < \tau,\eta\leq 1$ and $0< h \le T$, a node vector
  $\xv=(x_1,\ldots,x_d) \in \R^d$ is said to form a
  $(p,h,T,\tau,\eta)$-clustered configuration, if
  there exists a subset of $p$ nodes
  $\xv^c=\{x_\kappa,\ldots, x_{\kappa+p-1}\} \subset \xv$, $p\ge 2$,
  which satisfies the following conditions:
  \begin{enumerate}
  \item for each $x_j, x_k \in \xv^{c}, j\ne k$,
    $$\tau h \le |x_j-x_k| \le h;$$
  \item for $x_{\ell} \in \xv \setminus \xv^{c}$ and $x_j\in \xv$, $\ell \ne j$,
    $$\eta T \le |x_{\ell}-x_j| \le T.$$
  \end{enumerate}
\end{definition}

\begin{figure}
  \centering
  \includegraphics[width=0.9\linewidth]{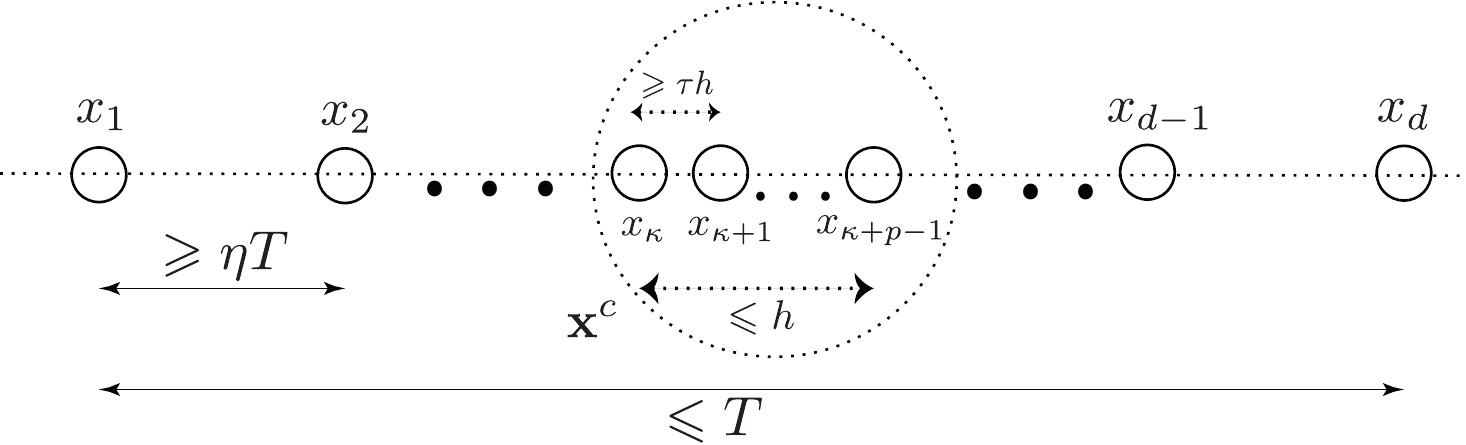}
  \captionsetup{singlelinecheck=off,font=small}
  \caption{A sketch of a uniform $(p,h,T,\tau,\eta)$-clustered
    configuration $\xv=(x_1,\dots,x_d)$ as in Definition
    \ref{def.uniform.cluster}.}
  \label{fig:cluster-conf}
\end{figure}

Our first main result provides an upper bound on
$diam\left(E_{\epsilon,\Omega}(F)\right)$, and its coordinate
projections, for any signal $F$ forming a clustered configuration as
above.

\begin{theorem}(Upper bound)\label{thm.accuracy.bounds.upper}
  Let $F=(\av,\xv) \in \P_d$, such that $\xv$ forms a
  $(p,h,T,\tau,\eta)$-clustered configuration and $0<m \leq \|\av\|$.  Then there exist positive constants
  $C_1,\ldots, C_5$, depending only on $d,p,m$, such that for
  each $\frac{C_4}{\eta T} \le \Omega \le \frac{C_5}{h} $ and
  $\epsilon \le C_3(\Omega \tau h)^{2p-1}$, it holds that:
    \begin{align*}
    	diam(E^{\xv,j}_{\epsilon,\Omega}(F)) & \le {C_1\over\Omega} \epsilon \times
    	\begin{cases}
    		(\Omega \tau h)^{-2p+2} , & x_j \in \xv^c,   \\
    		1,& x_j \in \xv \backslash \xv^c;
    	\end{cases}\\ 
    	diam(E^{\av,j}_{\epsilon,\Omega}(F)) &\le C_2 \epsilon \times
    	\begin{cases}
    		(\Omega \tau h)^{-2p+1}, & x_j \in \xv^c,\\
    		1,& x_j \in \xv \backslash \xv^c.
    	\end{cases}
    \end{align*}
\end{theorem}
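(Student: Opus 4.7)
The plan is to reduce the continuous-measurement problem to a finite Prony-type system of $2d$ nonlinear equations and then apply a quantitative implicit function theorem adapted to the cluster structure. Specifically, I would pick a decimation step $\lambda$ of order $\Omega$ with $\lambda\eta T$ bounded below by a positive constant and $\lambda h$ bounded above by a small one; sampling $\Phi$ at $s = k\lambda$ for $k = 0,\dots,2d-1$ produces moments $\tilde m_k$ satisfying $|\tilde m_k - m_k| \le \epsilon$, where $m_k = {\cal F}(F)(k\lambda) = \sum_{j=1}^{d} a_j z_j^k$ with $z_j = e^{-2\pi i \lambda x_j}$. Consequently, every $F' \in E_{\epsilon,\Omega}(F)$ produces a moment vector within $\epsilon$ of $(m_0,\dots,m_{2d-1})$ in the max norm, and it suffices to bound the diameter of the $\epsilon$-preimage under the Prony forward map $\Psi_\lambda:(\av,\xv) \mapsto (m_0,\dots,m_{2d-1})$.

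Next, I would introduce blown-up coordinates in which the $p$ cluster positions $x_\kappa,\dots,x_{\kappa+p-1}$ are replaced by the rescaled variables $\xi_j = (x_j - x_\kappa)/(\tau h)$ of order unity, while the non-cluster nodes and all amplitudes are kept unchanged. In these coordinates, $\Psi_\lambda$ extends continuously to the confluent limit $\lambda\tau h \to 0$, and a direct computation shows that its Jacobian at the base point becomes a block system whose non-cluster block is a well-conditioned non-confluent Vandermonde (thanks to $\lambda\eta T \gtrsim 1$), and whose cluster block is, up to a diagonal rescaling by powers of $\lambda\tau h$, a confluent Vandermonde in $\xi_1,\dots,\xi_p$ that is uniformly invertible by Lemma \ref{lemma.uniform.blowup}. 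Inverting these blocks and undoing the coordinate change then yields the linearised bounds: $\epsilon$ and $\epsilon/\Omega$ for the non-cluster amplitudes and nodes, the rate $\epsilon(\Omega\tau h)^{-(2p-1)}$ for the cluster amplitudes, and, since $dx_j = \tau h\, d\xi_j$ absorbs one power of $\Omega\tau h$, the rate $\Omega^{-1}(\Omega\tau h)^{-(2p-2)}\epsilon$ for the cluster nodes.

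To upgrade this linearised analysis to a genuine nonlinear diameter bound on $E_{\epsilon,\Omega}(F)$, I would invoke a quantitative inverse function theorem of Newton-Kantorovich type whose radius of validity is controlled by the ratio of the smallest singular value of the Jacobian to a uniform bound on its Hessian. A direct check shows that this radius is at least of order $(\Omega\tau h)^{2p-1}$, which is precisely the hypothesis $\epsilon \le C_3(\Omega\tau h)^{2p-1}$ of the theorem and ensures that $\Psi_\lambda$ is a diffeomorphism on the relevant neighbourhood with controlled distortion. The main obstacle in carrying this out is that the non-cluster and cluster blocks of the Jacobian are not exactly decoupled: off-diagonal interaction terms of order $\lambda h$ couple the two scales, and one must show that, in a suitably weighted norm, these terms are dominated by the diagonal blocks. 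This is exactly where the uniformity in the blowup lemma, combined with the choices $\lambda \asymp \Omega$ and $\lambda\eta T \gtrsim 1$, does the heavy lifting; without such uniform control, the cross-terms could swamp the small singular value of the cluster block and destroy the $(\Omega\tau h)^{-(2p-1)}$ scaling.
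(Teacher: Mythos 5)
Your overall plan---decimate to a Prony system at step $\lambda \asymp \Omega$, bound the inverse Jacobian with Vandermonde-type estimates, and invoke a quantitative inverse function theorem with radius of order $(\Omega\tau h)^{2p-1}$---is the right skeleton, and it does match the paper's strategy in outline. However, two essential steps are missing and one is misattributed, and without them the argument does not close.

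First, the claim that ``$\lambda\eta T\gtrsim 1$'' makes the non-cluster block a well-conditioned Vandermonde is not true for an \emph{arbitrary} $\lambda$ of order $\Omega$. The map $x_j\mapsto z_j = e^{-2\pi i\lambda x_j}$ wraps the real line around the unit circle; two non-cluster nodes that are far apart in $\R$ can alias to nearby points on the circle after multiplying by $\lambda$, and likewise a non-cluster node can collide with a cluster node. Having $\lambda\eta T$ bounded below is therefore not sufficient: one needs a measure-theoretic argument (this is what Lemma~\ref{lemma.uniform.blowup} and Proposition~\ref{prop.good.blowup} actually supply) showing that the set of ``bad'' $\lambda$'s in an interval of length $\asymp 1/\eta$ has measure less than the interval, so that some \emph{specific} admissible $\lambda$ exists for which all mapped nodes are uniformly separated. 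Lemma~\ref{lemma.uniform.blowup} is an existence-of-good-$\lambda$ tool, not an invertibility statement about confluent Vandermonde matrices, so the way you cite it does not support the conclusion you draw from it.

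Second, and more seriously, once you decimate, the forward map $\Psi_\lambda$ is \emph{not} injective: replacing any $x_j$ by $x_j + n/\lambda$ for $n\in\Z$, or permuting the nodes and amplitudes, leaves $(m_0,\dots,m_{2d-1})$ unchanged. Consequently the $\epsilon$-preimage of the sampled moments under $\Psi_\lambda$ is a union of infinitely many shifted and permuted copies of a small basic set, and its diameter is \emph{not} small. Your reduction ``it suffices to bound the diameter of the $\epsilon$-preimage under the Prony forward map'' is therefore false as stated, and the Newton--Kantorovich argument only controls one branch of this preimage. The paper's Propositions~\ref{prop.global.geo}--\ref{prop.good.lambda} and Appendix~\ref{appendix.upper.bound.tech} exist precisely to rule out the spurious branches: one shows that for a suitable $\lambda^*$, each shifted/permuted copy of the basic set fails to satisfy the decimated $\epsilon$-constraint for some \emph{other} admissible $\lambda_{\pi,\lv}$, so that intersecting over the family of decimations pins $E_{\epsilon,\Omega}(F)$ inside a single branch. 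Your proposal has no mechanism to play this role, so as written it proves only a local stability bound, not the global diameter bound asserted in Theorem~\ref{thm.accuracy.bounds.upper}.
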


\begin{remark}
  Our main focus is to investigate the error rates of the
  SR problem as the cluster size becomes small. Fixing the parameters
  $p,d,m$, the range of admissible $\Omega$ in Theorem
  \ref{thm.accuracy.bounds.upper},
  $\frac{C_4}{\eta T} \le \Omega \le \frac{C_5}{h}$, is non-empty for
  a sufficiently small cluster size $h$. Furthermore we comment here that the
  constants $C_4, C_5$ actually only depend on $d$.
\end{remark}

The above estimates are order optimal, as our next main theorem
shows. For simplicity and without loss of generality, in the results
below we assume that the index $\kappa$ is fixed.

\begin{theorem}(Lower bound)\label{thm.accuracy.bounds.lower} Let
  $m\leq M,2\leq p\leq d,\tau\leq{1\over{p-1}},\eta<{1\over{d}},T>0$
  be fixed. There exist positive constants $C_1'\dots,C_5'$, depending
  only on $d,p,m,M$, such that for every $\Omega,h$ satisfying
  $h\leq C_4' T$ and $\Omega h \leq C_5'$ there exists
  $F=(\av,\xv)\in\P_d$, with $\xv$ forming a
  $(p,h,T,\tau,\eta)$-clustered configuration, and with
  $0<m \leq \|\av\| \leq M < \infty$, such that for certain indices
  $j_1,j_2\in\left\{\kappa,\dots,\kappa+p-1\right\}$ and every
    $\epsilon \le C_3'(\Omega\tau h)^{2p-1}$, it holds that:
  \begin{align*}
    	diam(E^{\xv,j}_{\epsilon,\Omega}(F)) & \ge {C_1'\over\Omega} \epsilon \times
    	\begin{cases}
    		  (\Omega \tau h)^{-2p+2} , & \text{if } j=j_1,   \\
    		1 ,& \forall j\notin\left\{\kappa,\dots,\kappa+p-1\right\};
    	\end{cases}\\ 
    	diam(E^{\av,j}_{\epsilon,\Omega}(F)) &\ge C_2' \epsilon \times
    	\begin{cases}
    		(\Omega\tau h)^{-2p+1}, & \text{if } j=j_2,\\
    		1,& \forall j\notin\left\{\kappa,\dots,\kappa+p-1\right\}.
    	\end{cases}
    \end{align*}
\end{theorem}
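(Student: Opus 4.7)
The plan is to invoke the lower half of Proposition~\ref{prop.minimax}: to obtain a lower bound on $\mm^{\xv,j}(\epsilon,U,\Omega)$ it suffices to exhibit two signals $F^1,F^2\in U$ of $(p,h,T,\tau,\eta)$-clustered type with $\|\F(F^1)-\F(F^2)\|_{L^\infty[-\Omega,\Omega]}\le \epsilon$; setting $F=F^1$ then yields $F^2\in E_{\epsilon,\Omega}(F^1)$, so $diam(E^{\xv,j}_{\epsilon,\Omega}(F^1))\ge |P_{\xv,j}(F^1)-P_{\xv,j}(F^2)|$, and Proposition~\ref{prop.minimax} (with the standard rescaling $\epsilon\leftrightarrow 2\epsilon$) translates this into the required minimax bound. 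An entirely analogous argument handles the amplitude coordinates.

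For the non-cluster indices $j\notin\{\kappa,\dots,\kappa+p-1\}$ I would use a one-parameter perturbation of a well-isolated atom: keep all parameters of $F^1$ and $F^2$ identical except shift $x_j$ by $\delta\asymp\epsilon/\Omega$ in $F^2$. Using $|e^{-2\pi i s\delta}-1|\le 2\pi|s|\delta$ together with $|s|\le\Omega$, one gets $\|\F(F^1)-\F(F^2)\|_\infty\lesssim |a_j|\Omega\delta\asymp\epsilon$, yielding the rate $\epsilon/\Omega$. The analogous argument with a pure amplitude perturbation of size $\asymp\epsilon$ gives the rate $\epsilon$ for $|a_j-a_j'|$. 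The separation constraints $\tau\le 1/(p-1)$, $\eta<1/d$ and the bounds $m\le\|\av\|\le M$ leave enough slack so that both perturbed signals remain in $U$.

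The substantive content lies in the cluster bound. I would construct two $p$-atom cluster configurations $F^{c,1},F^{c,2}$ supported in an interval of length $h$ so that their signed difference, a measure with at most $2p$ atoms in the cluster, has Fourier transform of magnitude at most $\epsilon$ on $[-\Omega,\Omega]$, while a prescribed pair of cluster node coordinates differs by $\asymp\epsilon(\Omega\tau h)^{-(2p-2)}/\Omega$ and a prescribed pair of amplitudes differs by $\asymp\epsilon(\Omega\tau h)^{-(2p-1)}$. After the substitution $z=e^{-2\pi i sh}$ this becomes an extremal problem for generalised trigonometric polynomials restricted to the short arc $|\Arg z|\le 2\pi\Omega h\ll 1$: among $2p$-atomic complex measures with node separation at least $\tau h$ and amplitudes in $[m,M]$, the smallest achievable $L^\infty$-norm of the Fourier transform on $[-\Omega,\Omega]$ scales precisely as $(\Omega\tau h)^{2p-1}$. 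Such an extremal measure can be built from a suitably rescaled Chebyshev polynomial on the arc, essentially along the lines of \cite{akinshin2015accuracy} and the decimation-blowup framework of \cite{batenkov2018}. Appending the same well-separated non-cluster atoms to both $F^{c,1}$ and $F^{c,2}$ then produces $F^1,F^2\in U$.

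The main technical obstacle is the Chebyshev-type construction in the previous paragraph, which requires simultaneous control of four properties: the $(\Omega\tau h)^{2p-1}$ scaling of the Fourier norm, the minimum pairwise separation $\tau h$ within each cluster, amplitudes bounded in $[m,M]$, and \emph{localisation} of the parameter perturbation to a single coordinate $j_1$ or $j_2$ rather than spreading it across all $2p$ atoms---only the last of these ensures that the coordinate projection $E^{\xv,j}_{\epsilon,\Omega}$ (and not merely the full error set $E_{\epsilon,\Omega}$) attains the stated diameter. The restriction $\epsilon\le C_3'(\Omega\tau h)^{2p-1}$ is precisely the range in which the resulting extremal configuration remains a valid $(p,h,T,\tau,\eta)$-clustered one, so that both $F^1$ and $F^2$ genuinely lie in the class $U$ governing the minimax problem.
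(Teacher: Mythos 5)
Your framework is correct: to lower-bound $diam(E^{\xv,j}_{\epsilon,\Omega}(F))$ it suffices to exhibit $F^1,F^2$ with $\|\mathcal{F}(F^1)-\mathcal{F}(F^2)\|_{L^\infty[-\Omega,\Omega]}\le\epsilon$ and a large coordinate gap, and your non-cluster perturbation (shifting one isolated atom by $\delta\asymp\epsilon/\Omega$, or its amplitude by $\asymp\epsilon$) is essentially what the paper does. Also, your worry about ``localisation'' to a single coordinate is overcautious: the theorem only asserts existence of \emph{some} $j_1,j_2$, so a perturbation spread over several cluster atoms is fine as long as some coordinate gap has the right size — which a max-norm bound already gives.

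The genuine gap is in the cluster part, which you correctly flag as the substantive content but do not actually carry out. You pose the right extremal problem (a $2p$-atomic measure on a short arc with small Fourier transform on $[-\Omega,\Omega]$, separation $\ge\tau h$, bounded amplitudes, scaling $(\Omega\tau h)^{2p-1}$) and gesture at a Chebyshev construction, but you provide neither the construction nor the verification of its four required properties, and the paper's actual route is materially different and nontrivial. The paper blows the cluster up by $\Omega$, invokes a moment-matching theorem (Theorem~\ref{thm:moments-lb}, from \cite{akinshin2017error}) that produces a perturbed Prony system agreeing with the original in moments $m_0,\dots,m_{2p-2}$ and differing by $\tilde\epsilon$ in $m_{2p-1}$, with the stated node/amplitude gaps. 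The crucial missing link — that matching the first $2p-1$ algebraic moments and perturbing only $m_{2p-1}$ forces the \emph{Fourier transform} difference to be $O(\epsilon)$ uniformly on $[-\Omega,\Omega]$ — is established via Tur\'an's First Theorem (the ``Taylor domination'' inequality \eqref{eq:taylor.domination}), which controls all higher moments $m_k$ for $k\ge 2p$ in terms of the first $2p$, and hence controls the tail of the Taylor series \eqref{eq:fourier.taylor}. Your sketch leaves this transfer from polynomial/moment data to a uniform $L^\infty$ bound on the Fourier transform entirely implicit; that is precisely where the $\Omega\tau h\le\text{const}$ hypothesis and the blowup are used, and without it the proof is incomplete. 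A direct Chebyshev-on-an-arc argument might eventually yield the same bound, but you would then have to redo, rather than cite, what is essentially the content of Theorem~\ref{thm:moments-lb} plus the Tur\'an step — and you have not done so.
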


\begin{remark}
  The lower bounds for the quantities
  $diam(E^{\xv,j}_{\epsilon,\Omega}(F))$ were shown in
  \cite{akinshin2015accuracy} to hold for \emph{any signal $F$ with
    real amplitudes}, however, at the expense of the implicit
  dependence of the constants on the separation parameter
  $\tau$. While bounding $diam(\EO)$ (and its projections) for all
  signals $F$ is an interesting question in its own right, in this
  paper we use these to bound the minimax error rate, and therefore it
  is sufficient to show that there exist certain signals with large
  enough $\EO$.  As it turns out, it is possible to obtain a more
  accurate geometric description of these sets, which in turn can be
  used for reducing reconstruction error if additional a-priori
  information is available. Work in this direction was started in
  \cite{akinshin2017error} and we intend to provide further details of
  these developments in a future work.
\end{remark}

Combining Theorems \ref{thm.accuracy.bounds.upper} and
\ref{thm.accuracy.bounds.lower} with Proposition \ref{prop.minimax},
we obtain optimal rates for the minimax error $\mm$ and its
projections as follows.

\begin{theorem}\label{thm.minmax.bounds} Let   $m <  M,2\leq p\leq
  d,\tau<{1\over{2(p-1)}},\eta<{1\over{2d}},T>0$ 
  be fixed. There exist constants $c_1,c_2,c_3$, depending only on
  $d,p,m,M$ such that for all
  $\frac{c_1}{\eta T} \le \Omega \le \frac{c_2}{h} $ and
  $\epsilon \le c_3(\Omega \tau h)^{2p-1}$, the minimax error rates
  for the set
   \begin{align*}
     U:&=U(p,d,h,\tau,\eta,T,m,M)\\
     &=\left\{(\av,\xv)\in\P_d:\quad 0<m
       \leq \|\av\| \leq M < \infty,\; \xv \text{ forms a }
       (p,h,T,\tau,\eta)\text{-clustered configuration} \right\},
   \end{align*}
   satisfy the following.
   \begin{enumerate}
   \item For the non-cluster nodes:
     \begin{align*}
       \forall j\notin\left\{\kappa,\dots,\kappa+p-1\right\}:\quad
       \begin{cases}
         \mm^{\xv,j}(\epsilon,U,\Omega)  \asymp {\epsilon\over\Omega},\\
         \mm^{\av,j}(\epsilon,U,\Omega)  \asymp \epsilon.
       \end{cases}
     \end{align*}
   \item For the cluster nodes:
   \end{enumerate}
   \begin{align*}
    \max_{j=\kappa,\dots,\kappa+p-1}\mm^{\xv,j}(\epsilon,U,\Omega) & \asymp {\epsilon\over\Omega} (\Omega\tau h)^{-2p+2}, \\
    \max_{j=\kappa,\dots,\kappa+p-1}\mm^{\av,j}(\epsilon,U,\Omega) &\asymp \epsilon 
                    (\Omega\tau h)^{-2p+1}.
   \end{align*}
   The proportionality constants in the above statements depend only
   on $d,p,m,M$.
 \end{theorem}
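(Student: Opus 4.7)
My plan is to deduce the minimax theorem directly from the diameter estimates of Theorems~\ref{thm.accuracy.bounds.upper} and~\ref{thm.accuracy.bounds.lower} via Proposition~\ref{prop.minimax}. The stricter separation hypotheses $\tau<\frac{1}{2(p-1)}$ and $\eta<\frac{1}{2d}$ (a factor of $2$ tighter than what Theorem~\ref{thm.accuracy.bounds.lower} alone requires) are present so that both a worst-case signal and an entire $\tfrac{1}{2}\epsilon$-ball of perturbations around it can simultaneously be kept inside $U$.

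For the upper bound direction, every $F\in U$ satisfies the hypotheses of Theorem~\ref{thm.accuracy.bounds.upper} with the parameters appearing in $U$, so the upper half of Proposition~\ref{prop.minimax} gives
\[
\mm^{\xv,j}(\epsilon,U,\Omega)\le \sup_{F\in U}\,diam\!\bigl(E^{\xv,j}_{2\epsilon,\Omega}(F)\bigr),
\]
and analogously for $\mm^{\av,j}$. Substituting the bounds of Theorem~\ref{thm.accuracy.bounds.upper} with $2\epsilon$ in place of $\epsilon$ yields the stated upper estimates once $c_1:=C_4$, $c_2:=C_5$ and $c_3\le C_3/2$ are chosen to keep the ranges of $\Omega$ and $\epsilon$ compatible.

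For the lower bound direction I would apply Theorem~\ref{thm.accuracy.bounds.lower} not with the separation parameters $\tau,\eta$ of $U$ itself, but with the strictly tighter values $\tau^{*}:=2\tau\le \tfrac{1}{p-1}$ and $\eta^{*}:=2\eta<\tfrac{1}{d}$, which are still admissible for that theorem by the standing hypotheses. This produces a signal $F\in\P_d$ whose intra-cluster separations are $\ge 2\tau h$ and inter-cluster separations are $\ge 2\eta T$, and whose projected error sets $E^{\xv,j_1}_{\epsilon/2,\Omega}(F)$ and $E^{\av,j_2}_{\epsilon/2,\Omega}(F)$ already have diameters of the advertised asymptotic order (the extra powers of $2$ being absorbed into constants depending only on $p$). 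Feeding $F$ into the lower half of Proposition~\ref{prop.minimax} then delivers the desired lower bounds, provided one can justify the containment $E_{\epsilon/2,\Omega}(F)\subseteq U$.

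That containment is the only non-routine step, and is where the doubling of $\tau,\eta$ pays off. I would reapply Theorem~\ref{thm.accuracy.bounds.upper} to $F$ itself, using its own sharper parameters $\tau^{*},\eta^{*}$; this produces, for every $F'\in E_{\epsilon/2,\Omega}(F)$, uniform bounds of order $c_3\tau h$ on cluster node displacements, $c_3/\Omega$ on non-cluster displacements, and $O(c_3)$ on all amplitude displacements, whenever $\epsilon\le c_3(\Omega\tau h)^{2p-1}$. Shrinking $c_3$ (depending only on $d,p,m,M$ and the constants of Theorem~\ref{thm.accuracy.bounds.upper}) makes these displacements smaller than any prescribed fraction of $\tau h$, of $\eta T$, and of $M-m$, so the built-in slack $\tau^{*}-\tau=\tau$, $\eta^{*}-\eta=\eta$ keeps every separation of $F'$ inside the windows $[\tau h,h]$ and $[\eta T,T]$ while preserving $\|\av'\|\in[m,M]$. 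I expect the principal obstacle to be nothing more than careful tracking of these constants so that a single fixed $c_3$ simultaneously handles every $F\in U$ and both the node and amplitude diameter bounds.
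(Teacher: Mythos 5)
Your proposal follows essentially the same argument as the paper's own proof: it too defines an inner region with doubled cluster parameters ($\tau' = 2\tau$, $\eta' = 2\eta$), applies Theorem~\ref{thm.accuracy.bounds.lower} there to produce a worst-case $F$, then uses Theorem~\ref{thm.accuracy.bounds.upper} together with the slack created by the doubling to verify $E_{\epsilon/2,\Omega}(F)\subseteq U$ before invoking the lower half of Proposition~\ref{prop.minimax}. The one detail you leave unaddressed is that the slack from doubling $\tau,\eta$ only protects the \emph{lower} cutoffs $\tau h$ and $\eta T$ of Definition~\ref{def.uniform.cluster}; the amplitude window $m\le\|\av'\|\le M$ and the upper cutoffs $|x_\ell-x_j|\le T$ are a separate matter, and the paper handles them by additionally shrinking to $m<m'<M'<M$ and $T'=0.99T$ when forming the inner set $U'$, rather than by appealing to an ambient slack of $M-m$ that a boundary-level worst-case $F$ would not actually possess.
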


 \begin{proof}
   Let $C_3,C_3',C_4,C_4',C_5,C_5'$ be the constants from Theorems
   \ref{thm.accuracy.bounds.upper} and
   \ref{thm.accuracy.bounds.lower}. Put $c_1=C_4$ and
   $c_2=\min\left(C_5,C_5',C_4 C_4'\right)$. Let
   $\frac{c_1}{\eta T} \leq \Omega \leq \frac{c_2}{h}$, and
   $\epsilon \leq c_3(\Omega\tau h)^{2p-1}$, where
   $c_3\leq\min(C_3,C_3')$ will be determined below. It is immediately
   verified that $\Omega,h$ and $\epsilon$ as above satisfy the
   conditions of both Theorems \ref{thm.accuracy.bounds.upper} and
   \ref{thm.accuracy.bounds.lower}.
   \begin{description}
   \item[Upper bound] Directly follows from the upper bounds in
     Theorem \ref{thm.accuracy.bounds.upper} and Proposition
     \ref{prop.minimax}.
   \item[Lower bound] Denote
     $U_{\epsilon}=\left\{F\in U: E_{\frac{1}{2}\epsilon,\Omega}(F)
       \subseteq U \right\}$. To prove the lower bounds on $\mm$, it
     is sufficient to show that there exists an
     $F\in U_{\epsilon}\neq\emptyset$ such that the conclusions of
     Theorem \ref{thm.accuracy.bounds.lower} are satisfied for this
     $F$.

     It is not difficult to see that for any choice of the parameters
     as above, the set $U$ has a non-empty interior, and furthermore
     that one can choose $m',M'$ satisfying $m<m'<M'<M$, and also
     $T'=0.99T$, $\tau'=2\tau$ and $\eta'=2\eta$, such that
     $$
     U'=U(p,d,h,\tau',\eta',T',m',M')\subset U, \quad \partial U'
     \cap \partial U = \emptyset.
     $$
     By construction, there exist positive constants
     $\tilde{C_1},\tilde{C_2}$, independent of $\Omega,h$ and
     $\tau,\eta$, such that
     \begin{align}\label{eq:mm-lb-ub-inf}
       \begin{split}
       \inf_{u\in\partial U, u'\in\partial
       U'}\left|P_{\xv,j}(u)-P_{\xv,j}(u')\right| & \ge \tilde{C_1} \times
                                              \begin{cases}
                                                \tau h , & x_j \in \xv^c,   \\
                                                \eta T ,& x_j \in \xv \setminus \xv^c;
                                              \end{cases}\\ 
       \inf_{u\in\partial U, u'\in\partial
         U'}\left|P_{\av,j}(u)-P_{\av,j}(u')\right| &\ge \tilde{C_2}.
     \end{split}
     \end{align}
     Now we use the fact that $\epsilon<c_3(\Omega\tau
     h)^{2p-1}$. Applying Theorem \ref{thm.accuracy.bounds.upper} to
     an arbitrary signal $F'\in U'$, and using the conditions
     ${1\over\Omega}\leq \frac{\eta T}{c_1}$ and
     $\Omega\tau h \leq \Omega h \leq c_2$, we obtain that
     \begin{align}\label{eq:mm-lb-ub-diam}
       \begin{split}
       diam\left(E^{\xv,j}_{{1\over 2}\epsilon}(F')\right) &\leq
                                                             \begin{cases}\frac{C_1c_3}{2}\tau h, & x_j\in\xv^c,\\
                                                               \frac{C_1c_3}{2\Omega}(\Omega \tau h)^{2p-1} \leq \frac{C_1c_3}{2c_1}c_2^{2p-1}\eta T, & x_j\in\xv\setminus\xv^c;
                                                             \end{cases}\\
       diam\left(E^{\av,j}_{{1\over 2}\epsilon}(F')\right) & \leq \begin{cases}
         \frac{C_2c_3}{2}, & x_j\in \xv^c, \\
         \frac{C_2c_3}{2}c_2^{2p-1}, & x_j\in\xv\setminus\xv^c.
       \end{cases}
     \end{split}
     \end{align}
     Now we set $c_3=\min(C_3,C_3',C_3'')$ where
     $$
     C_3''=\min(1,c_1)\times\min(1,c_2^{-2p+1}) \times \min\biggl(\frac{2\tilde{C_1}}{C_1},\frac{2\tilde{C_2}}{C_2}\biggr).
     $$
     Combining \eqref{eq:mm-lb-ub-inf} and \eqref{eq:mm-lb-ub-diam} we
     obtain that $F'\in U_{\epsilon}$. Since $F'\in U'$ was arbitrary,
     we conclude that $U'\subseteq U_{\epsilon}$. Since clearly
     $U'\neq\emptyset$, applying Proposition \ref{prop.minimax} and Theorem
     \ref{thm.accuracy.bounds.lower} finishes the proof.
   \end{description}
\end{proof}

% \bigskip
% 
% \noindent {\bf Remark}: There exists a constant $K_1$ such that for $\epsilon \geq K_1 (\Omega h)^{2p-1}$ one can show
% that the error set can be unbounded.
% So essentially a noise level $\epsilon \sim (\Omega h)^{2p-1}$ is maximal to ensure an effective minimax error bound.
% See Section \ref{sec.singularities} for a discussion.

\section{Numerical optimality of Matrix Pencil algorithm}
\label{sec:numerics}

The main theoretical result of this paper, Theorem
\ref{thm.minmax.bounds}, establishes the best possible
scalings for the SR problem with clustered nodes.  In this section we
provide some numerical evidence that a certain SR algorithm, the
Matrix Pencil (MP) method \cite{hua_svd_1991,hua_matrix_1990}, attains
these performance bounds. 

Our choice of MP is fairly arbitrary, as we believe that many
high-resolution algorithms have similar behaviour in the regime
$\SRF\gg 1$.

Throughout this section, we replace $\Omega$ by $N$, so that the
spectral data is sampled with unit spacing.

\subsection{The Matrix Pencil method}

\begin{algorithm}[hbt]
  \SetKwInOut{Input}{Input} \SetKwInOut{Output}{Output}
  \Input{Model order $d$}
  \Input{Sequence $\{\tilde{m}_k\},\;k=0,1,\dots,N-1$ where $N >
    2d$, of the form \eqref{eq:mp.model.eq}}
  \Input{pencil parameter $d+1 \leq L \leq N-d$}
  \Output{Estimates for the nodes $\{x_j\}$ and amplitudes
    $\{a_j\}$ as in \eqref{eq:mp.model.eq}}
  Compute the matrices $A=\widetilde{H}^{\uparrow}, B=\widetilde{H}_{\downarrow}$\;
  Compute the truncated Singular Value Decomposition (SVD) of $A,B$
  of order $d$:
  $$
  A = U_1 \Sigma_1 V_1^H,\quad B = U_2 \Sigma_2 V_2^H,
  $$
  where $U_1,U_2,V_1,V_2$ are $L\times d$ and $\Sigma_1,\Sigma_2$ are
  $d\times d$  \;
  Generate the reduced pencil
  $$
  A'=U_2^H U_1 \Sigma_1 V_1^H V_2,\quad B'=\Sigma_2
  $$
  where $A',B'$ are $d\times d$\; Compute the generalized eigenvalues
  $\tilde{z}_j$ of the reduced pencil $(A',B')$ , and put
  $\{\tilde{x}_j\}=\frac{1}{2\pi}\{\angle \tilde{z}_j\},\;j=1,\dots,d$\; Compute
  $\tilde{a}_j$ by solving the linear least squares problem
  $$
  \vec{\tilde{a}}=\arg\min_{\vec{a}\in\C^d}\|\vec{\tilde{m}}-\tilde{V}\vec{a}\|_2,
  $$
  where $\tilde{V}=\tilde{V}(\tilde{\xv})$ is the Vandermonde matrix
  $\tilde{V} = \left[\exp\left(2\pi\imath\tilde{x}_j
      k\right)\right]_{k=0,\dots,N-1}^{j=1,\dots,d}$\; \Return the
  estimated $\tilde{x}_j$ and $\tilde{a}_j$.
  \caption{The Matrix Pencil algorithm}
  \label{alg:mpencil}
\end{algorithm}

Let $F=(\av,\xv)\in\P_d$ as in \eqref{eq.spike.train.signal} with
$x_j \in \left[-\frac{1}{2},\frac{1}{2}\right]$. Given the noisy
Fourier measurements
\begin{align}
  \label{eq:mp.model.eq}
  \begin{split}
    \tilde{m}_k&= \underbrace{{\cal F}(F)(-k)}_{=m_k} + n_k \\
    &= \sum_{j=1}^d a_j \exp(2\pi\imath x_j k ) + n_k,\qquad k=0,1,\dots,N-1, \quad N > 2d,
  \end{split}
\end{align}
the Matrix Pencil method estimates
$\tilde{F}=\left(\tilde{\av},\tilde{\xv}\right)$ as follows. Consider
the Hankel matrix
\begin{equation}
  \label{eq:mp.hankel.noiseless}
H =
\begin{bmatrix}
  m_0 & m_1 & \dots & m_{N-L-1} \\
  m_1 & m_2 & \dots & m_{N-L} \\
  \vdots & \iddots & \iddots & \vdots \\
  m_{L} & m_{L+1} & \dots & \dots m_{N-1}
\end{bmatrix}
\in \C^{(L+1)\times  (N-L)},
\end{equation}
and further let $H^{\uparrow}=H[0:L-1,:]$ and
$H_{\downarrow}=H[1:L,:]$ be the $L\times(N-L)$ matrix obtained from
$H$ by deleting the last (respectively, the first) row. Then it turns
out that that the numbers $z_j=\exp(2\pi\imath x_j)$ are the $d$
nonzero generalized eigenvalues (i.e. rank-reducing numbers) of the
pencil $H_{\downarrow}-zH^{\uparrow}$. If we now construct the noisy
matrices $A=\widetilde{H}^{\uparrow}, B=\widetilde{H}_{\downarrow}$
from the available data
$\{\tilde{m}_k\}_{k=0,\dots,N-1}$, we could apparently
just solve the Generalized Eigenvalue Problem with $A,B$. However, if
$L>d$ then the pencil $B-zA$ is close to being singular, and so an
additional step of low-rank approximation is required. We summarize
the MP method in Algorithm \ref{alg:mpencil}, and the interested
reader is referred to the widely available literature on the subject
(e.g. \cite{hua_svd_1991,hua_matrix_1990,moitra_super-resolution_2015,stoica_spectral_2005},
and references therein) for further details. Note that there exist
numerous variants of MP, but, again, we believe the particular details
to be immaterial for our discussion.

\subsection{Experimental setup}\label{numerics.setup}

\subsubsection{Clustered node configurations}

In our experiments presented below, we constructed
$(p,h,T,\tau,\allowbreak\eta)$-clustered configurations with
$$
\tau={1\over{p-1}}, T=\pi, \eta=\frac{\pi-h}{\pi(d-p+1)}
$$ as follows:
\begin{enumerate}
\item The cluster nodes $\xv^c=(x_1,\dots,x_p)$ where $x_j=(j-1)\cdot
  \Delta$ and $\Delta={h\over {p-1}}$ for $j=1,\dots,p$.
\item The non-cluster nodes were chosen to be $$x_{p+j}=(p-1)\Delta +
  j\cdot \frac{\pi-(p-1)\Delta}{d-p+1},\quad j=1,\dots,d-p.$$
\end{enumerate}

\subsubsection{Choice of signal and
  perturbation}
Two different schemes were tested:
\begin{enumerate}[label=\textbf{S\arabic*}]
\item \label{s1} A generic signal with complex amplitude vector
  $\av^{(1)} = \left(\imath^0,\imath^1,\imath^2,\dots\right)\in\C^d$
  and a bounded random perturbation sequence $\{n_k\}$, uniformly
  distributed in $\left[-\epsilon,\epsilon\right]$.
\item \label{s2} Worst-case scenario in accordance
  with the construction of Section \ref{sec:lower-bound} (and in
  particular of Theorem \ref{thm:moments-lb}): a real amplitude vector
  $\av^{(2)} = \left(1,-1,1,\dots,\right)\in\R^d$ and the perturbed
  Fourier coefficient sequence $\{\tilde{m}_k\}$ of the particular
  signal $F_{\epsilon}=(\av',\xv')\in\P_d$ constructed according to
  Algorithm \ref{alg.worst.case}:
$$
\tilde{m}_k = {\cal F}(F_{\epsilon})(-k)=\sum_{j=1}^d \av'_j \exp(2\pi\imath \xv'_j k),\quad k=0,\dots,N-1.
$$
\end{enumerate}

% Another choice for the error sequence $\{n_k\}$ is a random bounded
% perturbation, taking each $n_k$ to be a random complex number whose
% real and imaginary parts are uniformly distributed in
% $\left[-\epsilon,\epsilon\right]$.

\begin{algorithm}[!htb]
  \SetKwInOut{Input}{Input}
  \SetKwInOut{Output}{Output}

  \Input{Signal $F=(\av,\xv)\in\P_d$ with $\av=\av^{(2)}$ and cluster
    nodes $\xv^{c}=(x_1,\dots,x_p)$}
  \Input{Noise level $\epsilon$}
  \Output{The perturbed signal $F_{\epsilon}$}
  Compute the cluster center $\mu=\frac{x_1+x_p}{2}$ and put
  $\tilde{\xv}^{c}=\xv^{c}-\mu$ \;
  Construct the moment vector of the centered cluster:
  $\vec{g}=\left(\sum_{j=1}^p\av_j \tilde{\xv}_j^k \right)_{k=0,1,\dots,2p-1} \in \R^{2p}$ \;
  Construct the vector $\vec{g}'$ to be equal to $\vec{g}$ except the
  last entry: $\vec{g}'_k=\vec{g}_k$ for $k=0,1,\dots,2p-2$ and
  $\vec{g}'_{2p-1} = \vec{g}_{2p-1}+\epsilon$ \;
  Solve the Prony problem of order $p$ with the data $\vec{g}'$ (for
  $\epsilon$ small enough, a unique solution always exists -- see
  Proposition \ref{prop.prony.method} and
  \cite{batenkov2013geometry}), obtaining a signal $F'=(\av',\xv') \in
  \P_{p}$ \;
  Move the cluster nodes back and put
  $$
  F_{\epsilon}(x) = \sum_{j=p+1}^d\av_j \delta(x-\xv_j) +
  \sum_{j=1}^{p}\av'_j \delta(x-(\xv'_j+\mu));
  $$
  \Return the signal $F_{\epsilon}$.
  \caption{The worst-case perturbation signal}
  \label{alg.worst.case}
\end{algorithm}

\begin{algorithm}[!htb]
  \SetKwInOut{Input}{Input}
  \SetKwInOut{Output}{Output}
  \SetKw{true}{true} \SetKw{false}{false}

  \Input{$p,d,h,N,\epsilon$}
  \Input{Testing scheme (either \ref{s1} or \ref{s2})}
  Construct the signal $F$ and the sequence  $\tilde{m}_k$,
  $k=0,\dots,N-1$   according to Subsection
  \ref{numerics.setup} \;
  Compute the actual perturbation magnitude
  $$
  \epsilon_0 = \max_{k=0,\dots,N-1}|{\cal F}(F)(-k)-\tilde{m}_k|;
  $$
  
  Execute the MP method (Algorithm \ref{alg:mpencil}) with
  $L=\left\lceil{N\over 2}\right\rceil$ and obtain
  $F_{MP}=(\av^{MP},\xv^{MP})$ \; \For{each $j$}{ compute the error
    for node $j$:
  $$
  e_j=\min_{\ell}|\xv^{MP}_{j}-\xv_{\ell}|;
  $$

  The success for node $j$ is defined as
  $$
  Succ_j=\left(e_j < {{\min_{\ell\neq j}|\xv_{\ell}-\xv_j|} \over 3}\right).
  $$

  \If{$Succ_j$ == \true}{
    let $\ell(j)=\arg\min_{\ell}|\xv^{MP}_{j}-\xv_{\ell}|$ \;
    compute normalized node error amplification factor
    $$
    {\cal K}_{\xv,j}=\frac{|\xv_j-\xv^{MP}_{\ell(j)}|\cdot N}{\epsilon_0};
    $$
    
    compute normalized amplitude error amplification factor
    $$
    {\cal K}_{\av,j}=\frac{|\av_j-\av^{MP}_{\ell(j)}|}{\epsilon_0};
    $$
  }
}

\Return $\epsilon_0$, and $({\cal K}_{\xv,j},{\cal K}_{\av,j},Succ_j)$
for each node $j=1,\dots,d$.
  \caption{A single experiment}
  \label{alg.single.experiment}
\end{algorithm}

\subsection{Results}

\subsubsection{Error amplification factors}
In the first set of experiments, we measured the actual error
amplification factors ${\cal K}_{\xv,j},{\cal K}_{\av,j}$ as in
Algorithm \ref{alg.single.experiment} (recall also
\eqref{eq:noise.amplification}), choosing $\epsilon,N,h$ randomly
from a pre-defined numerical range. The results are presented in
Figures \ref{fig:amplification.factors.s1} and
\ref{fig:amplification.factors.s2} for the testing schemes \ref{s1}
and \ref{s2}, accordingly. The scalings of Theorem
\ref{thm.minmax.bounds}, in particular the
dependence on $\SRF$, are confirmed.

\begin{figure}[!htb]
  \centering
  \includegraphics[width=\linewidth]{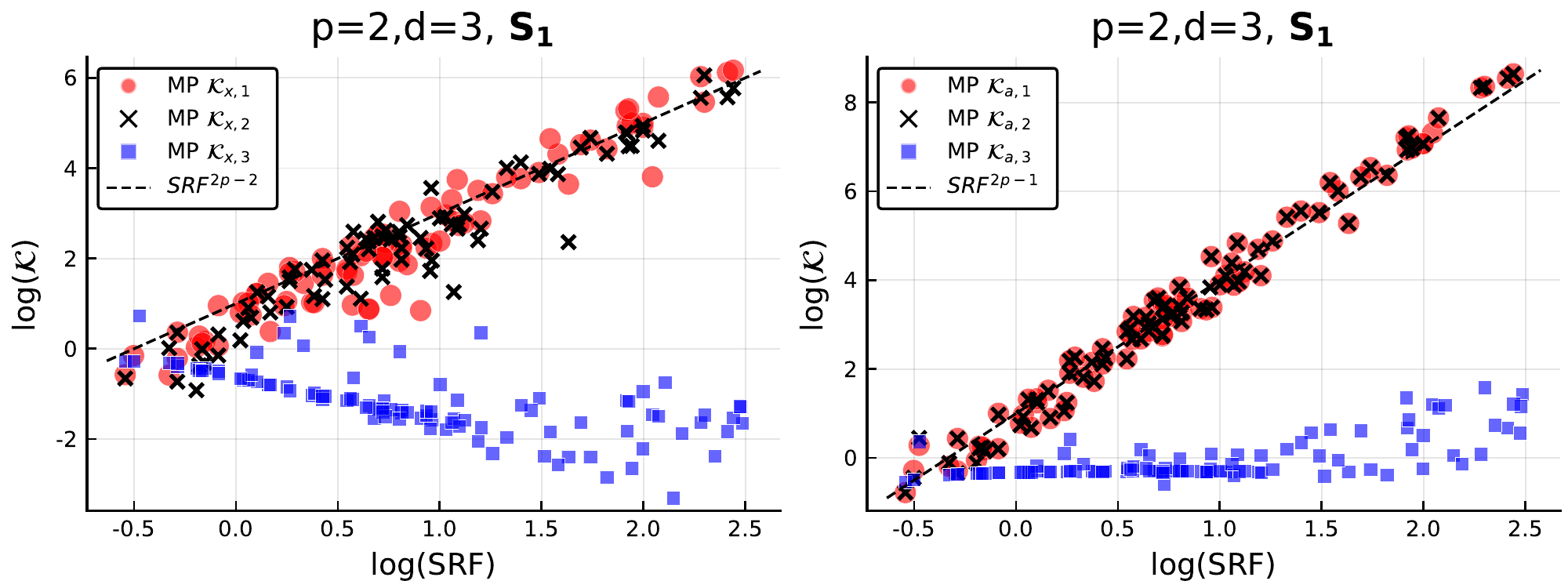}
  \captionsetup{width=\linewidth,font=small} 
  \caption{The error amplification factors. Algorithm
    \ref{alg.single.experiment} was executed 500 times with
    $p=2,d=3$, scheme \ref{s1} and varying $h,N,\epsilon$. For cluster
    nodes $j=1,2$, the node error amplification factors
    ${\cal K}_{\xv,j}$ (left panel) scale like $\SRF^{2p-2}$, while
    the amplitude error amplification factors ${\cal K}_{\av,j}$ (right panel)
    scale like $\SRF^{2p-1}$. For the non-cluster node $j=3$, both
    error amplification factors are bounded by a constant.}
  \label{fig:amplification.factors.s1}
\end{figure}

\begin{figure}[!htb]
  \centering
  \includegraphics[width=\linewidth]{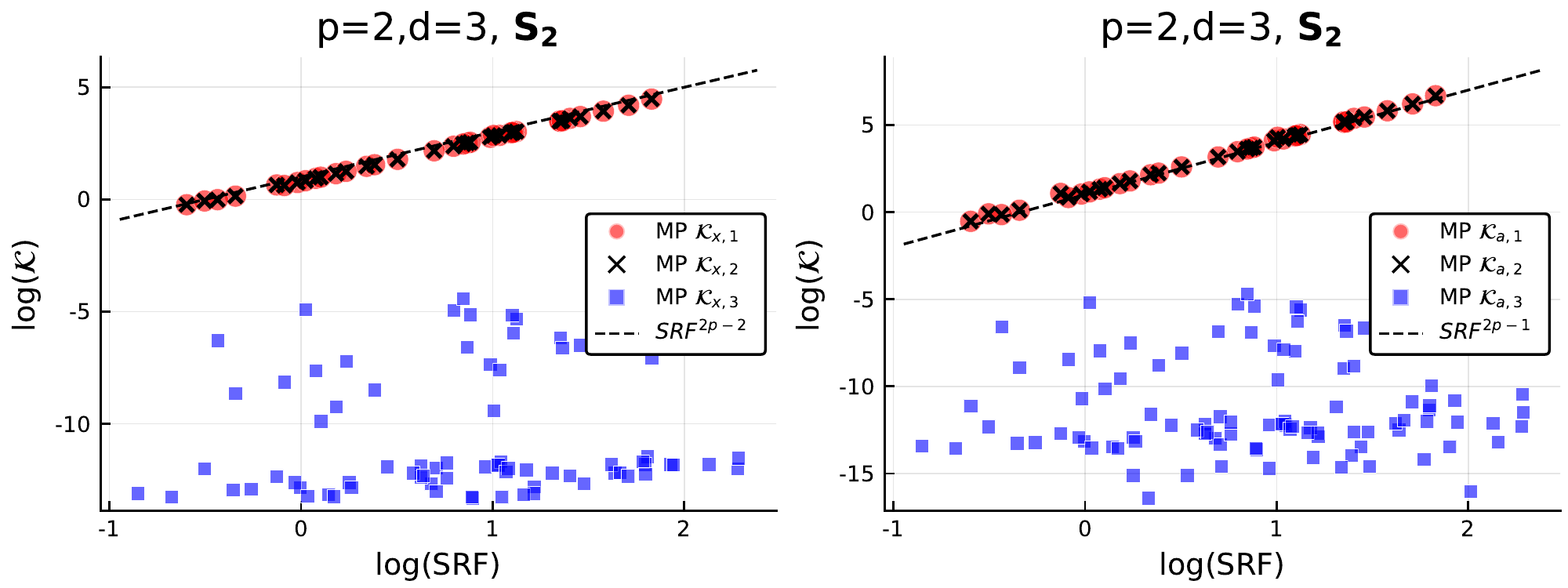}
  \captionsetup{width=\linewidth,font=small} 
  \caption{Same setup as in Figure \ref{fig:amplification.factors.s1},
    scheme \ref{s2}. Comparing with Figure
    \ref{fig:amplification.factors.s1}, the variance of the factors
    corresponding to the cluster nodes is much smaller than for the
    case of random perturbations, indicating that the construction is
    indeed worst-case.}
  \label{fig:amplification.factors.s2}
\end{figure}
%\newpage
% \todo{remove newpage command if posible}

\subsubsection{Noise threshold for successful recovery}
In the second set of experiments, we investigated the noise threshold
$\epsilon\lessapprox \SRF^{1-2p}$ for successful recovery, as
predicted by the theory. We have performed $15000$ random experiments
with scheme \ref{s1} (the randomness was in the choice of
$h,N,\epsilon$ and the noise sequence $\{n_k\}$) according to
Algorithm \ref{alg.single.experiment}, recording the success/failure
result of each such experiment. The results for $d=4$ and $p=2,3$ are
presented in Figure \ref{fig:phase.tran.1}, and the theoretical
scaling above is confirmed for the MP method.

\begin{figure}[hbt]
  \centering
  \subfloat{
    \includegraphics[width=0.45\linewidth]{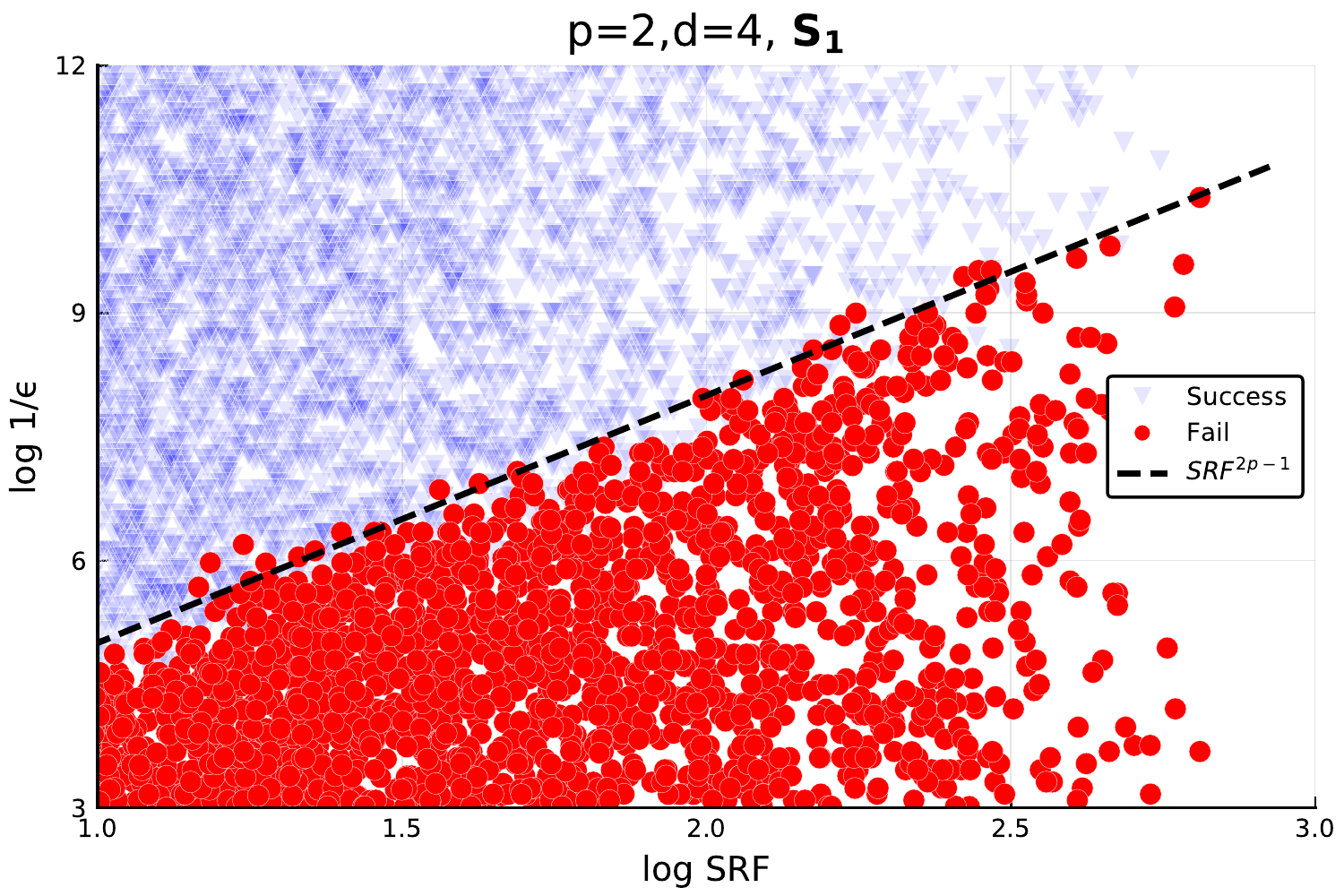}}
  \subfloat{
    \includegraphics[width=0.45\linewidth]{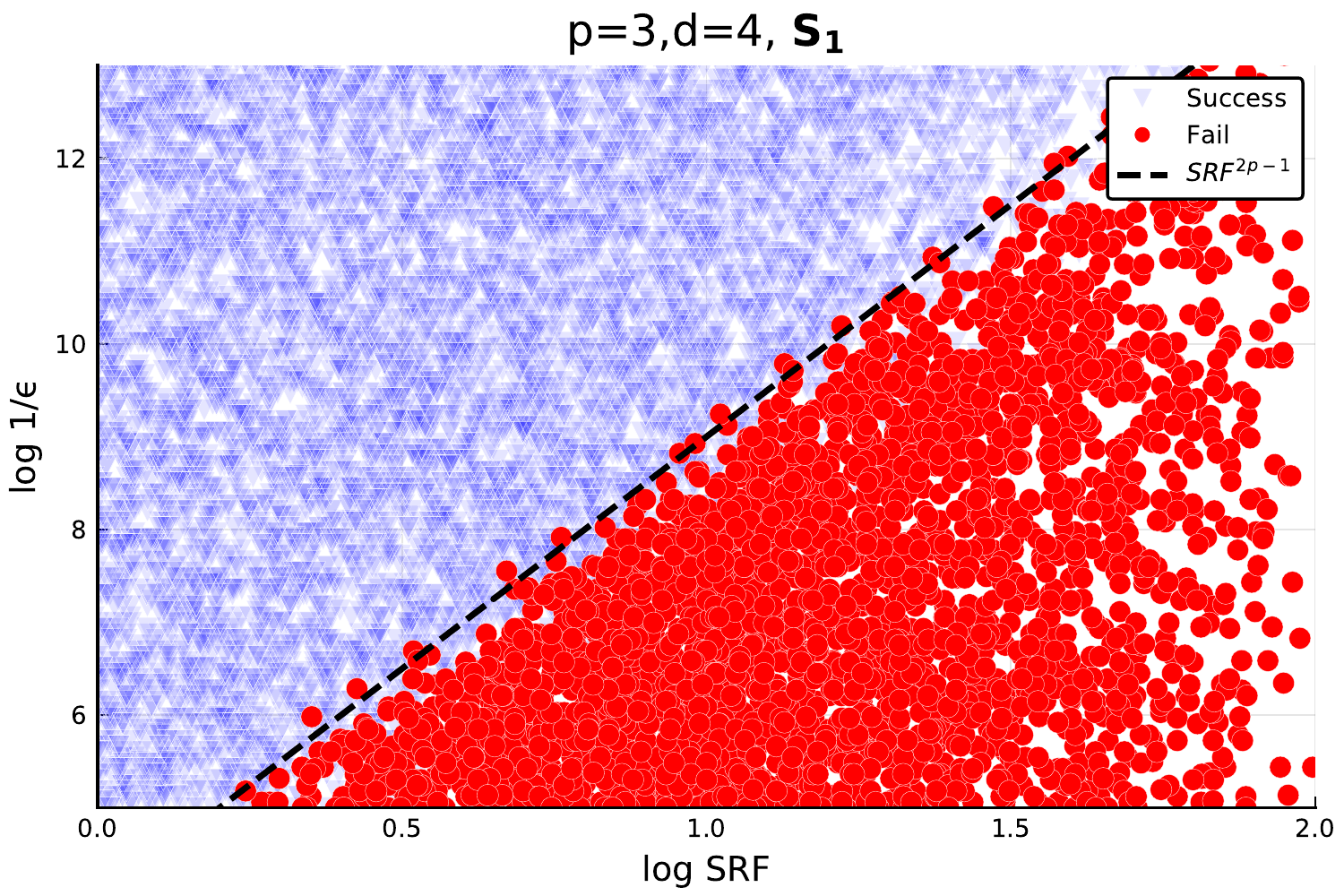}
  }
  \captionsetup{font=small,width=\linewidth}
  \caption{Phase transition for successful recovery, random bounded
    perturbations (scheme \ref{s1}) with $d=4$ and $p=2,3$. Each
    experiment is represented by either a blue triangle (if the
    recovery was successful, i.e. $Succ_j==True,\;\forall j=1,\dots,d$
    as returned by Algorithm \ref{alg.single.experiment}) or a red
    circle otherwise. The relationship
    $\epsilon_{crit} \approx \SRF^{1-2p}$ for the critical value of
    $\epsilon$ is confirmed.}
  \label{fig:phase.tran.1}
\end{figure}

Although not covered by our current theory, it is of interest to
establish the recovery threshold for every node separately. In Figure
\ref{fig:phase.tran.2} we can see that for a non-cluster node, the
threshold is approximately constant (i.e. does not depend on the
$\SRF$) -- even though Theorem \ref{thm.accuracy.bounds.upper}
requires $\epsilon\lessapprox\SRF^{1-2p}$.

\begin{figure}[hbt]
  \centering
  \includegraphics[width=0.5\linewidth]{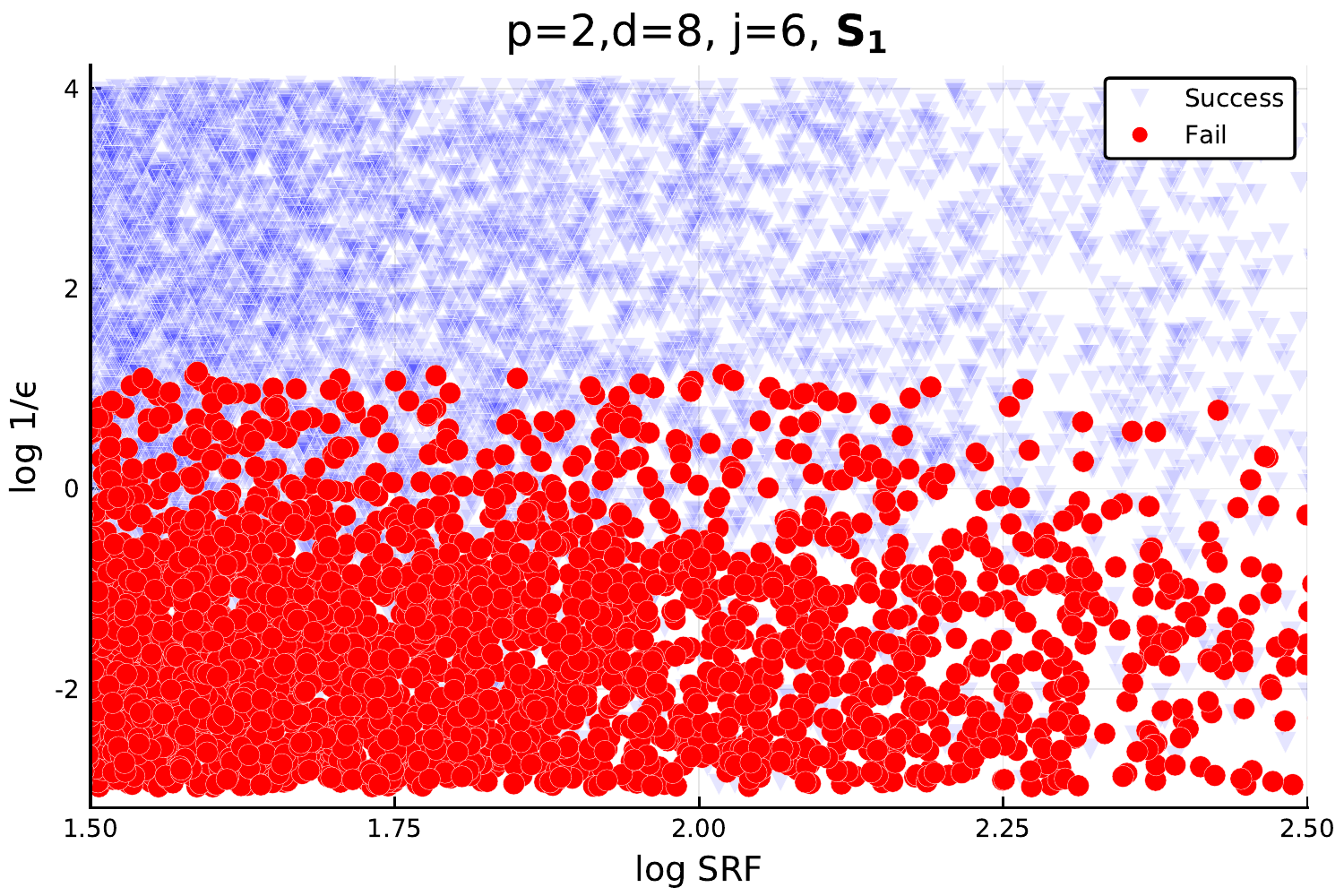}
  \captionsetup{width=\linewidth,font=small}
  \caption{Phase transition for successful recovery of a non-cluster
    node. Comparing with Figure \ref{fig:phase.tran.1}, the threshold
    is approximately constant $\epsilon_{crit}\approx const$. Here
    $p=2,d=8$, scheme \ref{s1}, plotted is the successful recovery of
    the node at index $j=6$.}
  \label{fig:phase.tran.2}
\end{figure}

\section{Normalization}
\label{sec:shift.scale}

In the intermediate claims,
instead of considering a general signal $F=(\av,\xv)\in\P_d$,
we shall usually assume that the node vector $\xv=(x_1,\dots,x_d)$ is normalized 
to the interval $\left[-{1\over 2},{1\over 2}\right]$, and
centered around the origin, i.e. $x_d=-x_1$. Let us briefly argue how
to obtain the general result from this special case.

Let us define the scale and shift transformations on $\P_d$.
\begin{definition}
  For $F=\sum_{j=1}^d
  a_j \delta(x-x_j) \in \P_d$ and $\alpha\in\R$, we define
  $SH_{\alpha}: \P_d \rightarrow \P_d$ as follows:
  $$SH_\alpha(F)(x)= \sum_{j=1}^d a_j \delta(x-(x_j-\alpha)).$$
\end{definition}

\begin{definition}
  For $F=\sum_{j=1}^d a_j
  \delta(x-x_j) \in \P_d$ and $T>0$, we define $SC_{T}: \P_d
  \rightarrow \P_d$ as follows:
  $$SC_T(F)(x) = \sum_{j=1}^d a_j \delta\left(x-\frac{x_j}{T}\right).$$
\end{definition}

  By the shift property of the Fourier transform, for any $\epsilon,\Omega >0$, we have that
  \begin{equation}\label{eq.upper.bound.shift}
    SH_{\alpha}(\EO) = E_{\epsilon,\Omega}(SH_{\alpha}(F)).
  \end{equation}
  
  \smallskip
  
  By the scale property of the Fourier transform we have that for any $\epsilon >0$,
  \begin{equation}\label{eq.upper.bound.scale}
    SC_T(E_{\epsilon,\Omega}(F))=E_{\epsilon,\Omega T}(SC_{T}(F)).
  \end{equation}

  Thus we have the following.

  \begin{proposition}\label{prop:normalization}
    Let $F=(\av,\xv)\in\P_d$, $\alpha\in \R$ and $T>0$. Then
    for any $\epsilon>0$ and $1\leq j \leq d$ we have
    \begin{align}
      \label{eq:normalized.diam.x}
      diam(E^{\xv,j}_{\epsilon,\Omega}(F))=T diam\left(E^{\xv,j}_{\epsilon,\Omega T}\left(SC_T\left(SH_{\alpha}(F)\right)\right)\right) \\
      \label{eq:normalized.diam.a}
            diam(E^{\av,j}_{\epsilon,\Omega}(F))= diam\left(E^{\av,j}_{\epsilon,\Omega T}\left(SC_T\left(SH_{\alpha}(F)\right)\right)\right)
    \end{align}
  \end{proposition}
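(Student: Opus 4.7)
The plan is to combine the two identities \eqref{eq.upper.bound.shift} and \eqref{eq.upper.bound.scale}, which already package the key Fourier-analytic content (shift- and scale-covariance of the error set), with the trivial transformation behavior of the coordinate projections $P_{\xv,j},P_{\av,j}$ under the maps $SH_\alpha$ and $SC_T$, and with elementary properties of the diameter (invariance under translation, positive homogeneity under dilation).

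First I would handle the shift. By definition of $SH_\alpha$, applying it to a signal $F'=(\av',\xv')\in\P_d$ subtracts $\alpha$ from every node coordinate while leaving the amplitudes untouched, i.e.\ $P_{\xv,j}\circ SH_\alpha = P_{\xv,j}-\alpha$ and $P_{\av,j}\circ SH_\alpha = P_{\av,j}$. Combining this with \eqref{eq.upper.bound.shift} and the definitions in \eqref{eq:individ.err.set.def} immediately gives
$$
E^{\xv,j}_{\epsilon,\Omega}(SH_\alpha(F)) = E^{\xv,j}_{\epsilon,\Omega}(F)-\alpha,\qquad E^{\av,j}_{\epsilon,\Omega}(SH_\alpha(F)) = E^{\av,j}_{\epsilon,\Omega}(F),
$$
so both projection diameters are invariant under $SH_\alpha$ by translation invariance of $diam$.

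Next I would apply the scale identity in the same fashion. Since $SC_T$ divides each node coordinate by $T$ and leaves amplitudes fixed, we have $P_{\xv,j}\circ SC_T = \tfrac{1}{T}P_{\xv,j}$ and $P_{\av,j}\circ SC_T = P_{\av,j}$. Combined with \eqref{eq.upper.bound.scale}, this yields, for any $G\in\P_d$,
$$
E^{\xv,j}_{\epsilon,\Omega T}(SC_T(G)) = \tfrac{1}{T}E^{\xv,j}_{\epsilon,\Omega}(G),\qquad E^{\av,j}_{\epsilon,\Omega T}(SC_T(G)) = E^{\av,j}_{\epsilon,\Omega}(G).
$$
Taking diameters and using positive homogeneity gives a factor of $\tfrac{1}{T}$ on the node side and invariance on the amplitude side.

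Finally I would chain the two steps: apply the scale result with $G = SH_\alpha(F)$, then insert the shift identities from the first step, to obtain exactly \eqref{eq:normalized.diam.x} and \eqref{eq:normalized.diam.a}. There is essentially no obstacle here — the whole argument is a bookkeeping exercise, because the nontrivial covariance statements \eqref{eq.upper.bound.shift}--\eqref{eq.upper.bound.scale} are already given; one just needs to verify that the coordinate projections commute as claimed with $SH_\alpha$ and $SC_T$, which is immediate from the definitions.
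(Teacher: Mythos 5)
Your proof is correct and matches the paper's (implicit) argument: the paper states Proposition~\ref{prop:normalization} as a direct consequence of \eqref{eq.upper.bound.shift} and \eqref{eq.upper.bound.scale}, and you have simply filled in the routine bookkeeping — the covariance of $P_{\xv,j},P_{\av,j}$ under $SH_\alpha,SC_T$ (note both maps preserve the node ordering since $T>0$, so the index $j$ is unaffected) together with translation invariance and positive homogeneity of the diameter. Nothing to add.
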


\section{Upper bounds}\label{sec.upper.bound}

\subsection{Overview of the proof}
The proof of  Theorem \ref{thm.accuracy.bounds.upper},
presented in the next subsections and some of the appendices, is
somewhat technical. In order to help the reader, we provide an overview
of the essential ideas and steps.

The main object of the study, the error set $\EO\subset\P_d$, is the
 pre-image of an (infinite-dimensional) $\epsilon$-cube in the data
space, under the Fourier transform mapping ${\cal F}$ (recall
\eqref{eq.fourier.tr} and Definition \ref{def.error.set}). However, it
is not obvious how to obtain quantitative estimates on ${\cal F}^{-1}$
directly. Thus we replace ${\cal F}$ with certain finite-dimensional
sampled versions of it, denoted
$FM_{\lambda}:\P_d \to {\mathbb C}^{2d}$, where the sampling parameter
$\lambda$ defines the rate at which $2d$ equispaced samples of
${\cal F}(F)$ are taken. The pre-images of $\epsilon$-cubes under
$FM_{\lambda}$ define the corresponding $\lambda$-error sets
$E_{\epsilon,(\lambda)}\subset \P_d$, and in fact the original $\EO$ is
contained in the intersection of all the
$E_{\epsilon,(\lambda)}$. Thus, it is sufficient to bound the diameter
of a single such $E_{\epsilon,(\lambda^*)}$ (see remark in the next
paragraph) with a carefully chosen $\lambda^*$ so that the result will
be as small as possible. Such quantitative estimates are obtained by
careful analysis of the row-wise norms of the Jacobian matrix of
$FM_{\lambda^*}^{-1}$ and applying the so-called {\it quantitative
inverse function theorem} (Theorem \ref{thm.inverse.function}). Using
these estimates, the optimal $\lambda^*$ is shown to be on the order
of $\Omega$, from which the upper bounds of Theorem
\ref{thm.accuracy.bounds.upper} follow.

An additional technical complication arises from the fact that
$FM_{\lambda}^{-1}$ defines a multivalued mapping, and the full
pre-image $E_{\epsilon,(\lambda)}$ contains multiple copies of a
certain ``basic'' set $A=A_{\epsilon,\lambda}$. However, when
considering the intersection of all $E_{\epsilon,(\lambda)}$'s, the
non-zero shifts for certain different $\lambda$'s do not intersect,
and therefore eventually only the diameter of the basic set $A$
needs to be estimated.

Below is a brief description of the different intermediate results,
and the organization of the remainder of Section \ref{sec.upper.bound}.

\begin{enumerate}
\item In Subsection \ref{subsection.norm.estimates} we formally define
  the $\lambda$-decimated maps $FM_{\lambda}$, the corresponding error
  sets $E_{\epsilon,(\lambda)}$, and provide quantitative estimates
  on the Jacobian of $FM_{\lambda}^{-1}$ in Proposition
  \ref{prop.uniform.jacobian.bounds} (proved in Appendix
  \ref{appendix.norm.bounds}). These bounds essentially depend on the
  ``effective separation'' of each node in $\xv$ from its neighbours,
  after a blowup by a factor of $\lambda$.
\item In Subsection \ref{section.decimation} we show that for a signal
  $F=(\av,\xv)$, there exist a certain range of admissible
  $\lambda$'s, denoted by $\Lambda(\xv)$, for which the effective
  separation (see previous item) between the nodes in $\xv^c$ is of
  the order of $\Omega h$, while for the rest of the nodes, it is
  bounded from below by a constant independent of $\Omega,h$. These
  estimates are proved in Proposition \ref{prop.good.blowup}.
\item In Subsection \ref{sec.error.sets} we study in detail the
  geometry of the error sets $E_{\epsilon,(\lambda)}$ for
  $\lambda\in\Lambda(\xv)$. First, we consider (in Subsection
  \ref{sec.local}) the local inverses $FM_{\lambda}^{-1}$. For each
  $\lambda\in\Lambda(\xv)$, we show that the local inverse exists in a
  neighborhood $V$ of radius $R\approx \left(\Omega h\right)^{2p-1}$
  around $FM_{\lambda}(F)$, and provide estimates on the Lipschitz
  constants of $FM_{\lambda}^{-1}$ on $V$ and the diameter of
  $FM_{\lambda}^{-1}(V)$. The main bounds to that effect are proved in
  Proposition \ref{prop.lipshitz}, using the previously established
  general estimates from Proposition
  \ref{prop.uniform.jacobian.bounds} and the quantitative inverse
  function theorem (Theorem \ref{thm.inverse.function}).
\item Next, denoting $A=A_{R,\lambda}=FM_{\lambda}^{-1}(V)$, we show
  in Proposition \ref{prop.global.geo} that the set
  $E_{\epsilon,(\lambda)}$ is a union of certain copies of $A$, where
  each such copy is obtained by shifting the nodes in $A$ by an
  integer multiple of $\lambda^{-1}$, and/or by permuting them.
\item In Subsection \ref{sec:proof-upper-bound} we complete the
  proof. At this point we consider the entire set $\Lambda(\xv)$.  The
  main technical step, Proposition \ref{prop.elimiate.period} (proved
  in Appendix \ref{appendix.upper.bound.tech}), establishes that for a
  certain $\lambda^*\in\Lambda(\xv)$ and all possible permutations
  $\pi$ and shifts $\ell\in\mathbb{Z}\setminus\{0\}$, there exists a
  particular $\bar{\lambda} = \bar{\lambda}(\pi,\ell)\in\Lambda(\xv)$
  such that the intersection between $\pi$-permutation and
  $\ell$-shift of $A_{R,\gs}$ and the \emph{entire error set}
  $E_{R,(\bar{\lambda})}$ is empty. From this fact it immediately
  follows that the original error set $\EO$ with $\epsilon=R$ is
  contained in $A_{R,\gs}$ (Proposition \ref{prop.good.lambda}). The
  proof is finished by invoking the previously established estimates
  on the diameter of $A_{R,\gs}$ and its projections.
\end{enumerate}

\begin{remark}
  We expect that the tools developed throughout the proof will also be
  useful to calculate the minimal finite sampling rate required to
  achieve the minimax error rate stated in Theorem
  \ref{thm.accuracy.bounds.upper}.
\end{remark}

\subsection{\texorpdfstring{$\lambda$-decimation maps}{Decimation maps}}\label{subsection.norm.estimates}
For the purpose of the following analysis, we extend the space of signals $\P_d$ to include signals
with complex nodes and denote the extended space by $\bar{\P}_d$,
$$
	\bar{{\cal P}}_d=\left\{(\av,\xv) : \av=(a_1,\ldots,a_d)\in {\mathbb C}^d,\; \xv=(x_1,\ldots,x_d)\in {\mathbb C}^{d} \right\}.
$$

We will be considering specific sets of exactly
$2d$ samples of the Fourier transform, made at constant rate
$\lambda$ as follows.
\begin{definition}\label{def.partial.fourier}
  For $\lambda > 0$, we define the map $FM_\lambda:
  \barP \cong \C^{2d} \rightarrow \C^{2d}$ by
  $$FM_\lambda((\av,\xv)) = \vec{\mu} = \left(\mu_0,\dots,\mu_{2d-1}\right),\; \mu_k = \sum_{j=1}^d a_j e^{2\pi i x_j \lambda k},\
  k=0,\ldots,2d-1.$$
\end{definition} 
We call such map a $\lambda$-decimation map.

\smallskip

For $\lambda>0$ and $\epsilon > 0$, we define the
corresponding error set $E_{\epsilon,(\lambda)}$ as follows.
\begin{definition}\label{def.error.set.lambda}
  The error set $E_{\epsilon,(\lambda)}(F) \subset \P_d$ is the set
  consisting of all the signals $F'\in \P_d$ with
  \begin{align*}
    \left\|\FMG(F')- \FMG(F) \right\|\le \epsilon.
  \end{align*} 
  Similarly we denote by
  $E^{\av,j}_{\epsilon,(\lambda)}(F), \
  E^{\xv,j}_{\epsilon,(\lambda)}(F)$ the projection of the error set
  $E_{\epsilon,(\lambda)}(F)$ onto the corresponding amplitudes and
  the nodes components (compare \eqref{eq:individ.err.set.def}).
\end{definition}

\smallskip

Now consider the given spectrum
${\cal F}(F)(s),\ s \in [-\Omega,\Omega]$. Clearly for each
$\lambda \le \frac{\Omega}{2d-1}$ we have that
$E_{\epsilon,\Omega}(F)\subseteq E_{\epsilon,(\lambda)}(F)$ giving
\begin{equation}\label{eq:errorset.via.decimated.errorsets}
  E_{\epsilon,\Omega}(F) \subseteq \bigcap_{\lambda \in (0, \frac{\Omega}{2d-1}]} E_{\epsilon,(\lambda)}(F).
\end{equation}

Hence, to prove the upper bounds in Theorem
\ref{thm.accuracy.bounds.upper}, we shall show that there exists a \emph{certain} 
subset $S \subseteq \left(0, \frac{\Omega}{2d-1}\right]$
such that for each $\lambda \in S$,
$diam \big(E_{\epsilon,(\lambda)}(F)\big)$ can be effectively
controlled.

In the next proposition, we derive a uniform bound on the norms of the
inverse Jacobian of $FM_{\lambda}$ near a signal with clustered
nodes. The bounds explicitly depend on the distances between the
so-called ``mapped'' nodes $z_j(\lambda)=e^{2\pi i \lambda x_j}$.

\begin{proposition}[Uniform Jacobian bounds]\label{prop.uniform.jacobian.bounds}
  Let $F=(\av,\xv) \in \barP$, $\av=(a_1,\dots,a_d)$,
  $\xv=(x_1,\ldots,x_d)$ and for $\lambda > 0$ let
  $z_1=e^{2\pi i \lambda x_1},\ldots,z_{d}=e^{2\pi i \lambda x_d
  }$. Suppose that for each $j=1,\ldots,d$, we have
  $0< \frac{m}{2} \le |a_j|$ and $\frac{1}{2}\le |z_j| \le 2$ for some
  $m>0$.
  
  Further assume that for $\tilde{\eta}, \tilde{h}$ with
  $1 \ge \tilde{\eta} \ge \tilde{h}$,
  and $\xv^{c}=\{ x_\kappa,\ldots,x_{\kappa+p-1}\}\subset \xv$, $p\ge 2$, the nodes 
  $z_1,\ldots,z_d$ satisfy:
  \begin{enumerate}
  \item For each $x_j, x_k \in \xv^{c}, j\ne k$, we have that $|z_j-z_k| \ge \tilde{h}$.
  \item For each $x_{\ell} \in \xv \setminus \xv^{c}$ and
    $x_j\in \xv$, $\ell \ne j$, we have that
    $|z_{\ell}-z_j|\ge \tilde{\eta}$.
  \end{enumerate}
  Then the Jacobian matrix of $FM_{\lambda}$ at $F$, denoted by $J_{\lambda}(F)$, is
  non-degenerate. Furthermore, write the inverse Jacobian matrix $J^{-1}_\lambda(F)$ in the following block form 
  $J^{-1}_\lambda(F)=\begin{bmatrix} A\\ \tilde{B} \end{bmatrix}$, where $A,\tilde{B}$ are $d\times 2d$.
  Then, the $\ell_1$ norms of the rows of the blocks $A,\tilde{B}$ are bounded as follows:
  \begin{align}
    \sum_{k=1}^{2d} |A_{j,k}| &\le K_1(\tilde{\eta},d,p),
    & x_j \in \xv \setminus \xv^c,\label{eq.prop.1}\\
    \sum_{k=1}^{2d} |\tilde{B}_{j,k}| &\le K_2(m,\tilde{\eta},d,p)\frac{1}{\lambda}, 
    & x_j \in \xv \setminus \xv^c,\label{eq.prop.2}\\
    \sum_{k=1}^{2d} |A_{j,k}| &\le K_3(\tilde{\eta},d,p)\tilde{h}^{-2p+1}, & x_j \in \xv^c,\label{eq.prop.3}\\
    \sum_{k=1}^{2d} |\tilde{B}_{j,k}| 
                              & \le K_4(m,\tilde{\eta},d,p)\frac{1}{\lambda}\tilde{h}^{-2p+2}, & x_j \in
                              \xv^c,\label{eq.prop.4}
  \end{align}
  where $K_1(\cdot,\ldots,\cdot),K_2(\cdot,\ldots,\cdot),K_3(\cdot,..,,\cdot),K_4(\cdot,\ldots,\cdot)$ are constants
  depending only on the parameters inside the brackets.
\end{proposition}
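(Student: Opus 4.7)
The plan is to identify $J_\lambda(F)$ as a scaled confluent Vandermonde matrix, invert it explicitly via Hermite interpolation, and reduce the required row-norm bounds to elementary estimates on products of node differences. Computing $\partial \mu_k/\partial a_j = z_j^k$ and $\partial \mu_k/\partial x_j = 2\pi i \lambda k\, a_j z_j^k$, I obtain the factorization $J_\lambda(F) = W D$, where $D = \mathrm{diag}(1,\ldots,1,\, 2\pi i \lambda a_1,\ldots, 2\pi i \lambda a_d)$ and $W\in\mathbb{C}^{2d\times 2d}$ is the confluent Vandermonde matrix with columns $(z_j^k)_{k=0}^{2d-1}$ and $(k z_j^k)_{k=0}^{2d-1}$, $j=1,\ldots,d$. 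Since the $z_j$ are pairwise distinct (by the assumed separations) and $|a_j|\ge m/2>0$, both factors are invertible, so $J_\lambda(F)$ is non-degenerate and $J_\lambda^{-1}(F) = D^{-1}W^{-1}$.

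A standard biorthogonality argument now identifies the rows of $W^{-1}$ with coefficient vectors of Hermite basis polynomials. Writing a row of $W^{-1}$ as the coefficient vector of some $P(z)=\sum_k c_k z^k$ of degree $\le 2d-1$, the biorthogonality relations translate into $P(z_\ell)=\delta_{j\ell}$ with $z_\ell P'(z_\ell)=0$ (for the $a_j$-row) or $P(z_\ell)=0$ with $z_\ell P'(z_\ell)=\delta_{j\ell}$ (for the $x_j$-row); since $z_\ell\neq 0$, this forces $P=L_j$ or $P=z_j^{-1}\tilde L_j$, respectively, where
\begin{equation*}
L_j(z) = \bigl[1 - 2\ell_j'(z_j)(z-z_j)\bigr]\,\ell_j(z)^2, \qquad \tilde L_j(z) = (z-z_j)\,\ell_j(z)^2,
\end{equation*}
and $\ell_j(z) = \prod_{\ell\neq j}(z-z_\ell)/(z_j-z_\ell)$. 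The proposition therefore reduces to bounding the $\ell_1$-norm of the coefficient sequences of $L_j$ and $\tilde L_j$, with the extra scalar $(2\pi\lambda |a_j z_j|)^{-1} \le K(m)/\lambda$ from $D^{-1}$ accounting for the $1/\lambda$ factor in the node-row bounds.

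I would then bound these $\ell_1$ coefficient norms by Cauchy estimates on the circle $|z|=2$: for any polynomial of degree $\le 2d-1$ the coefficient $\ell_1$-norm is at most $C(d)$ times its supremum on $|z|\le 2$. Since $|z_j|\in[1/2,2]$, that supremum is controlled by direct estimates on $\ell_j$ and $\ell_j'(z_j)$ using the separation hypotheses. For a non-cluster index $j$, all $d-1$ reciprocal separations $|z_j-z_\ell|^{-1}$ are bounded by $\tilde\eta^{-1}$, yielding a constant $K(\tilde\eta,d,p)$ and hence \eqref{eq.prop.1}; the extra $1/\lambda$ factor gives \eqref{eq.prop.2}. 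For a cluster index $j$, exactly $p-1$ of the factors in $\ell_j$ come from other cluster nodes and are bounded only by $\tilde h^{-1}$, so $|\ell_j(z)|^2 \lesssim \tilde h^{-2(p-1)}$ and $|\ell_j'(z_j)| \lesssim \tilde h^{-1}$. This gives $|\tilde L_j(z)|\lesssim \tilde h^{-(2p-2)}$, which after the $1/\lambda$ factor produces \eqref{eq.prop.4}, while the additional $\ell_j'(z_j)(z-z_j)$ term inside $L_j$ contributes the extra $\tilde h^{-1}$ needed to reach the $\tilde h^{-(2p-1)}$ scaling in \eqref{eq.prop.3}.

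The main obstacle is bookkeeping rather than conceptual: I need to verify that every constant arising from the non-cluster geometry (from $\tilde\eta$, from $d$ and $p$, and from the Cauchy-estimate comparison between the coefficient $\ell_1$-norm and the supremum norm on $|z|\le 2$) can be absorbed into the single constant $K_i(\tilde\eta,d,p)$ or $K_i(m,\tilde\eta,d,p)$ of the statement, with no hidden dependence on $\lambda$ or $\tilde h$. Once $\ell_j$ is partitioned into its $p-1$ "cluster-sensitive" factors (responsible for the $\tilde h^{-\alpha}$ scaling) and its $d-p$ "non-cluster" factors (absorbed into the constant via $\tilde\eta^{-1}$), the stated four bounds follow by direct multiplication.
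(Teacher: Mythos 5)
Your proposal is correct and follows the same structural skeleton as the paper's proof: both identify $J_\lambda(F)$ as (a scaled version of) a confluent Vandermonde matrix, both use the Hermite-interpolation structure of its inverse, and both extract the $1/\lambda$ from the diagonal factor and the $\tilde h^{-2p+1}$ or $\tilde h^{-2p+2}$ powers from counting how many of the $d-1$ reciprocal separations entering $\ell_j$ are only $\tilde h^{-1}$-controlled versus $\tilde\eta^{-1}$-controlled. The one genuine difference is in how the row-wise $\ell_1$ bounds on $U_{2d}^{-1}$ are obtained. The paper cites Gautschi's explicit formulas (Theorem~\ref{thm.Gautschi}), which bound each row norm by expressions in $\Delta_j=\sum_{\ell\ne j}|z_j-z_\ell|^{-1}$ and $\Gamma_j=\bigl(\prod_{\ell\ne j}(1+|z_\ell|)/|z_j-z_\ell|\bigr)^2$, and then substitutes the separation hypotheses. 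You instead write down the dual Hermite basis polynomials $L_j$ and $\tilde L_j$ explicitly, bound their sup-norm on $|z|\le 2$, and convert to coefficient $\ell_1$-norms by a Cauchy estimate on $|z|=2$; this route is slightly more self-contained (it avoids the external citation) and makes the hypothesis $|z_j|\le 2$ do explicit work through the Cauchy radius, at the cost of a few extra lines of polynomial bookkeeping. Both give constants of the advertised form $K_i(\tilde\eta,d,p)$ or $K_i(m,\tilde\eta,d,p)$ with no hidden dependence on $\lambda$ or $\tilde h$, since the Cauchy-estimate comparison factor is at most $\sum_{k\ge 0}2^{-k}=2$ and the numerators $|z-z_\ell|\le 4$ are absolute.
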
 
The proof of Proposition \ref{prop.uniform.jacobian.bounds} is given
in Appendix \ref{appendix.norm.bounds}.

\subsection{The existence of an admissible decimation}\label{section.decimation}

In this section we shall prove the existence of a certain blowup factors $\lambda$, 
such that the mapped nodes $\{e^{2\pi i \lambda x_j}\}$ 
(see Proposition \ref{prop.uniform.jacobian.bounds}
above) attain ``good'' separation properties. This result will later 
be used to show that for any such $\lambda$, the corresponding inverse
$\lambda$-decimation map $FM_{\lambda}^{-1}$ will have the smallest possible 
coordinatewise Lipschitz constants with respect to $\Omega,h$
(up to constants) (see Proposition \ref{prop.uniform.jacobian.bounds}). 

\begin{definition}
  
For each $x \in \R$ and $a>0$ consider the operation
$\mod{\left(-\frac{a}{2},\frac{a}{2}\right]}$ defined as
$$x \mod{\left(-\frac{a}{2},\frac{a}{2}\right]} = x-ka,$$
where $k$ is the unique integer such that
$x-ka \in \left(-\frac{a}{2},\frac{a}{2}\right]$.  Using this
notation the principal value of the complex argument function is
defined as
$$\Arg(r e^{i\theta}) = \theta \mod{(-\pi,\pi]},$$
for each $\theta \in \R$ and $r>0$.
\end{definition}

\begin{definition}
  For $\alpha,\beta\in\mathbb{C}\setminus\{0\}$, we define the angular distance
  between $\alpha,\beta$ as
  $$
  \angle(\alpha,\beta)=
  \left|\Arg\left(\frac{\alpha}{\beta}\right)\right|=
  \biggl|\left(\Arg (\alpha) - \Arg(\beta) \right) \ \mod{
    (-\pi,\pi]}\biggr|,
  $$
  where for $z \in \C\setminus\{0\}$, $\Arg(z) \in (-\pi,\pi]$ 
  is the principal value of the argument of $z$.
\end{definition}

\begin{lemma}\label{lem.ang.dist.eucl.}
For $|x|=|y|=1$, we have  
\begin{equation}\label{eq.angle.to.euclidien}
  {2\over\pi} \angle(x,y) \leq |x-y| \leq \angle(x,y).
\end{equation}
  % For each $0 \le \alpha \le \frac{\pi}{2}$ we have that
  % \begin{equation}\label{eq.angle.to.euclidien}
  %   \angle(z_j,z_k) \ge \alpha \Rightarrow |z_{j}-z_{k}| \ge 2 \sin\left(\frac{\alpha}{2}\right)\ge
  %   \frac{2\alpha}{\pi}.
  % \end{equation}
\end{lemma}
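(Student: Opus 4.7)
The plan is to reduce the inequality to a standard one-variable estimate for the sine function on $[0,\pi/2]$, namely Jordan's inequality together with the elementary bound $\sin u \le u$.

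First I would write $x = e^{i\alpha}$ and $y = e^{i\beta}$ for some real $\alpha,\beta$, and set $\theta = \angle(x,y) \in [0,\pi]$. By the definition of angular distance, $\theta = |(\alpha-\beta) \bmod (-\pi,\pi]|$, so $\cos(\alpha-\beta) = \cos\theta$. Then a direct calculation gives
\begin{equation*}
|x-y|^2 = |e^{i\alpha}-e^{i\beta}|^2 = 2 - 2\cos(\alpha-\beta) = 2 - 2\cos\theta = 4\sin^2(\theta/2),
\end{equation*}
so $|x-y| = 2\sin(\theta/2)$ since $\theta/2 \in [0,\pi/2]$ ensures $\sin(\theta/2)\ge 0$.

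With this reformulation, the claimed inequalities become $\tfrac{2}{\pi}\theta \le 2\sin(\theta/2) \le \theta$, i.e., $\sin(\theta/2) \ge \theta/\pi$ and $\sin(\theta/2) \le \theta/2$. Setting $u = \theta/2 \in [0,\pi/2]$, these are the two standard inequalities
\begin{equation*}
\frac{2u}{\pi} \le \sin u \le u, \qquad u \in [0,\pi/2].
\end{equation*}
The right-hand inequality follows from $(u - \sin u)' = 1 - \cos u \ge 0$ with equality at $u=0$. The left-hand inequality is Jordan's inequality: on $[0,\pi/2]$ the function $\sin$ is concave (its second derivative $-\sin u$ is nonpositive), so it lies above the chord joining $(0,0)$ and $(\pi/2,1)$, which is the line $y = 2u/\pi$.

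There is essentially no obstacle here; the only small care needed is to verify that the chosen branch of the argument makes $\theta \in [0,\pi]$ so that $\theta/2$ stays in the range $[0,\pi/2]$ where Jordan's inequality applies, and this is immediate from $\angle(x,y) = |\Arg(x/y)|$ with $\Arg$ taking values in $(-\pi,\pi]$.
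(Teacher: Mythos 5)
Your proof is correct and follows essentially the same approach as the paper: both reduce $|x-y|$ to $2\sin(\theta/2)$ with $\theta=\angle(x,y)$ and then apply the standard two-sided sine bound $\tfrac{2u}{\pi}\le\sin u\le u$ on $[0,\pi/2]$, with the only difference being that you spell out short proofs of those elementary inequalities while the paper simply cites them as known.
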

\begin{proof}
  First,
  $$
  |x-y| = \left|1-{x\over y}\right| = 2\sin\left|{1\over 2}\Arg {x\over y}\right|=2\sin\left|{{\angle(x,y)}\over 2}\right|.
  $$
  Then use the fact that for any $|\theta|\leq{\pi\over 2}$ we have
  $$
  {2\over\pi}|\theta| \leq \sin|\theta|\leq|\theta|\qedhere.
  $$
\end{proof}

Let $F=(\av,\xv) \in \P_d$ such that the node vector
$\xv=(x_1,\ldots,x_d)$ forms a $(p,h,T,\tau,\eta)$-clustered configuration, with
$\xv^c=\{x_\kappa, x_{\kappa+1},\ldots,\allowbreak x_{\kappa+p-1}\}$.
According to Proposition \ref{prop.uniform.jacobian.bounds}, the
the norms of the rows of the inverse Jacobian $J^{-1}_\lambda(F)$ 
essentially depend on the the minimal distance between the
mapped nodes $z_j(\lambda)=e^{2\pi i \lambda x_j}$. After a blowup
by a factor of $\lambda\leq {1\over {2h}}$, the pairwise angular
distances $\angle\left(\cdot,\cdot\right)$ (and hence the euclidean
distances) between the mapped cluster-nodes
$z_{\kappa},\ldots,z_{\kappa+p-1}$ are now of order $\lambda h$.

\smallskip

On the other hand, the non-cluster nodes are at
distance larger than $\eta T\gg h$.  Therefore, after the
blowup by $\lambda$, the
non-cluster nodes
$z_1,\ldots,z_{\kappa-1},z_{\kappa+p},
\ldots,\allowbreak z_d$ may in principle be located
anywhere on the unit circle.  For example, any of these
mapped non-cluster nodes might coincide with, or be very
close to, a certain
mapped cluster node, or yet another 
mapped non-cluster node.

While this situation might occur for some values of $\lambda$, we will
now show that there exist certain sets of $\lambda$'s for which this
does not happen. We shall require the following key estimate
concerning the pairwise angular distance between any two mapped nodes.

  \begin{lemma}[A uniform blowup of two
    nodes]\label{lemma.uniform.blowup}
    Let $x_j,x_k \in \R, \ x_j \ne x_k$,
    and let $\Delta = |x_j-x_k|$.
    Consider the following blowups $z_j=z_j(\lambda)=e^{2\pi i \lambda x_j},
    z_k=z_k(\lambda)= e^{2\pi i \lambda x_k}$.  Then
    for $0 \le \alpha \le \pi$ and an interval $I=[a,b]\subset \R$, the set 
    \begin{equation}\label{eq:sig.j.k.def}
    \Sigma^{\alpha}_{j,k}(I)=\left\{\lambda \in I :\angle\big(z_j(\lambda),z_k(\lambda)\big) \le
      \alpha \right\}
  \end{equation}
  is a union of $N$ intervals $I_1,\ldots,I_N$ with $\left\lfloor
      |I|\Delta\right\rfloor \le N \le \left\lfloor |I|\Delta\right\rfloor +1$,
    and 	
    $$|I_j| \le \frac{\alpha}{\pi} \frac{1}{\Delta}, \tab j=1,\ldots,N.$$ 	 

    % \le
    % 2\frac{\alpha}{\pi}.$$
  \end{lemma}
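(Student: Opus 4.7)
The plan is to reduce the lemma to a one-variable arithmetic problem: counting how often $\lambda\Delta$ lies within $\alpha/(2\pi)$ of an integer as $\lambda$ ranges over $I$. First I would unwind the angular distance. Since $z_j(\lambda)\,z_k(\lambda)^{-1} = e^{2\pi i \lambda(x_j - x_k)}$ and $\angle$ is symmetric in its arguments, I may assume $x_j - x_k = \Delta > 0$, whence
\begin{equation*}
\angle\bigl(z_j(\lambda),\, z_k(\lambda)\bigr) \;=\; \bigl|\,2\pi\lambda\Delta \bmod (-\pi,\pi]\,\bigr|,
\end{equation*}
so $\lambda \in \Sigma^{\alpha}_{j,k}(\R)$ is equivalent to $|\lambda\Delta - m| \le \alpha/(2\pi)$ for some $m \in \Z$.

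Next, I would describe the set geometrically. Setting $I_m^{\ast} := [m/\Delta - \alpha/(2\pi\Delta),\, m/\Delta + \alpha/(2\pi\Delta)]$ for $m \in \Z$, we have $\Sigma^{\alpha}_{j,k}(\R) = \bigcup_{m} I_m^{\ast}$. Each $I_m^{\ast}$ has length exactly $\alpha/(\pi\Delta)$, and consecutive centers are spaced by $1/\Delta$; since $\alpha \le \pi$ forces $\alpha/(\pi\Delta) \le 1/\Delta$, the $I_m^{\ast}$ are pairwise disjoint (touching at most at endpoints when $\alpha = \pi$, which is harmless). Intersecting with $I = [a,b]$, every connected component of $\Sigma^{\alpha}_{j,k}(I)$ is a subinterval of some $I_m^{\ast}$, yielding at once the length bound $|I_j| \le \alpha/(\pi\Delta)$.

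Finally, I would count the components. An integer $m$ contributes a non-empty $I_m^{\ast} \cap I$ iff $m \in [a\Delta - \alpha/(2\pi),\, b\Delta + \alpha/(2\pi)]$, an interval of length $|I|\Delta + \alpha/\pi \in [|I|\Delta,\,|I|\Delta + 1]$. The lower bound $N \ge \lfloor|I|\Delta\rfloor$ is immediate from the standard fact that any real interval of length $L$ contains at least $\lfloor L \rfloor$ integers, applied to the subinterval $[a\Delta, b\Delta]$. For the upper bound $N \le \lfloor|I|\Delta\rfloor + 1$, I would argue that although the endpoint slack $\alpha/(2\pi)$ on each side could in principle bring in two extra partial intervals, a case analysis on the fractional parts of $a\Delta$ and $b\Delta$ shows that a boundary partial contribution from an external $m$ (with $m/\Delta \notin I$) can occur only when the adjacent interior integer $m \pm 1$ fails to produce a full interval, so the two effects cannot combine. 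The main obstacle will be exactly this boundary bookkeeping: the earlier steps are essentially a change of variables, but tightening the count from the naive $\lfloor|I|\Delta\rfloor + 2$ down to $\lfloor|I|\Delta\rfloor + 1$ requires careful accounting of how partial boundary contributions trade off with nearby interior ones.
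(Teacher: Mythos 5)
Your reduction --- converting the angular condition to $|\lambda\Delta-m|\le\alpha/(2\pi)$ for some $m\in\Z$, so that $\Sigma^\alpha_{j,k}(I)=I\cap\bigcup_m I_m^*$ with $I_m^*=[m/\Delta-\alpha/(2\pi\Delta),\,m/\Delta+\alpha/(2\pi\Delta)]$ --- is exactly the paper's argument, and the per-component length bound $\alpha/(\pi\Delta)$ and the lower bound on $N$ (via integers in $[a\Delta,b\Delta]$) both follow as you describe. The gap is the upper bound on $N$. The trade-off you invoke (``a boundary partial contribution from an external $m$ can occur only when the adjacent interior integer $m\pm1$ fails to produce a full interval'') is false: for $\alpha<\pi$, every integer strictly between $a\Delta$ and $b\Delta$ contributes a full component $I_m^*\subset I$ regardless of what happens at the endpoints, so both ends of $I$ can simultaneously capture an extra partial piece with no compensating loss in the interior. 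Concretely, take $\Delta=1$, $\alpha=\pi/2$ (half-width $1/4$), $I=[0.15,\,1.85]$: then $|I|\Delta=1.7$ and $\lfloor|I|\Delta\rfloor+1=2$, but
$$I\cap S^\alpha = [0.15,\,0.25]\;\cup\;[0.75,\,1.25]\;\cup\;[1.75,\,1.85],$$
which is three components ($m=0$ and $m=2$ each contribute a boundary fragment, and $m=1$ contributes a full interval).

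So the ``careful bookkeeping'' you flag as the main obstacle is not an obstacle that a cleverer case analysis can overcome: the worst-case count is genuinely $\lfloor|I|\Delta\rfloor+2$ (equivalently $\lceil|I|\Delta\rceil+1$), not $\lfloor|I|\Delta\rfloor+1$. It is worth noting that the paper's own proof of this step is a one-sentence assertion with no argument, and it has the same off-by-one; the measure bound used downstream, $\nu\bigl(\Sigma^\alpha_{j,k}(I)\bigr)\le(\lfloor|I|\Delta\rfloor+1)\frac{\alpha}{\pi\Delta}$, does remain valid because $I$ is covered by $\lceil|I|\Delta\rceil$ full periods of $S^\alpha$, each carrying measure $\frac{\alpha}{\pi\Delta}$, but the stated bound on the number of intervals should be $\lfloor|I|\Delta\rfloor+2$. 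There is no argument that rescues the claimed $+1$.
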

  \smallskip
  \begin{proof}
    For each $\lambda \in I$ we have 
    \begin{equation}\label{eg.blowup.tow.nodes.angle}
      \angle(z_{j}(\lambda),z_{k}(\lambda))=
      \left|\Arg\left(\frac{z_j(\lambda)}{z_{k}(\lambda)}\right)\right|=\left|\Arg(e^{2\pi i \lambda \Delta})\right|.
    \end{equation}

    By equation \eqref{eg.blowup.tow.nodes.angle} we have 
    \begin{align*}
      \left\{\lambda \in I :\angle\big(z_j(\lambda),z_k(\lambda)\big) \le \alpha \right\}=&\\
      \left\{\lambda \in I : \left|\Arg(e^{2\pi i \lambda \Delta})\right| \le \alpha \right\}=&\\
      \left\{\lambda \in I : \left|2\pi \lambda \Delta \mod{\left(-\pi,\pi \right]}\right| \le \alpha
      \right\}=&\\
      \left\{\lambda \in I : -\alpha \le \left(2\pi \lambda \Delta \mod{\left(-\pi,\pi \right]}\right) \le \alpha
      \right\}=&\\
      \left\{\lambda \in I : -\frac{\alpha}{2\pi} \frac{1}{\Delta} \le \left(\lambda \mod{
      \left(-\frac{1}{2\Delta},\frac{1}{2\Delta}\right]} \right) \le \frac{\alpha}{2\pi}
      \frac{1}{\Delta}\right\}.
    \end{align*}
    The last set above can be written as $I\cap S^{\alpha}$ where
    \begin{equation}
      S^{\alpha}=\left\{\lambda \in \R : -\frac{\alpha}{2\pi} \frac{1}{\Delta} \le \left(\lambda \mod{
          \left(-\frac{1}{2\Delta},\frac{1}{2\Delta}\right]} \right) \le \frac{\alpha}{2\pi}
        \frac{1}{\Delta}\right\}.
    \end{equation}
    Define the interval $I^{\alpha} = \left[-\frac{\alpha}{2\pi}
      \frac{1}{\Delta},\frac{\alpha}{2\pi} \frac{1}{\Delta}\right]$.
    Then the set $S^{\alpha}$ is a union of intervals of length $\frac{\alpha}{\pi}\frac{1}{\Delta}$
    as follows
    $$
    S^{\alpha}=\bigcup_{\ell\in \mathbb{Z}} \left(I^{\alpha} +
      \frac{\ell}{\Delta}\right) = \bigcup_{\ell\in \mathbb{Z}} \left\{ \lambda
      + \frac{\ell}{\Delta} : \lambda \in I^{\alpha} \right\}.
    $$

    The intersection of $S^{\alpha}$ with any interval $I$ is then a union of 
    $\left\lfloor |I|\Delta\right\rfloor \le N \le \left\lfloor |I|\Delta\right\rfloor +1$
    intervals of length smaller or equal to $\frac{\alpha}{\pi}\frac{1}{\Delta}$.
    This concludes the proof of Lemma \ref{lemma.uniform.blowup}.
  \end{proof}

\smallskip

Now we state and prove the main result of this subsection.

\begin{proposition}\label{prop.good.blowup}
  Let
  $F=(\av,\xv) \in \P_d, \ \xv=(x_1,\ldots,x_d) \subset
  [-\frac{1}{2},\frac{1}{2}]$, such that $\xv$ forms a
  $(p,h,1,\tau,\eta)$-clustered configuration with
  $\xv^c=\{x_\kappa, x_{\kappa+1},\ldots,\allowbreak
  x_{\kappa+p-1}\}$.
  
  Let $\Omega \le \frac{2d-1}{2}\cdot{1\over h}$. For each
  $\lambda > 0 $ let
  $z_{1}(\lambda)=e^{2\pi i \lambda
    x_1},\ldots,z_{d}(\lambda)=e^{2\pi i \lambda x_{d}}$.
  
  Then each interval $I \subset\left[\frac{1}{2}\frac{\Omega}{2d-1},\frac{\Omega}{2d-1}\right]$ of length 
  $|I| = \frac{1}{\eta}$ contains a sub-interval $I' \subset I$ of length $|I'| \ge (2 d^2 \eta)^{-1}$ such
  that for each $\lambda \in I'$:
  \begin{enumerate}
  \item For all $x_{\ell}\in \xv\setminus \xv^c$ and $x_j\in \xv$, $x_j\ne x_{\ell}$,
    \begin{align}
      \angle(z_{\ell}(\lambda),z_j(\lambda))&\ge \frac{1}{d^2}.\label{eq.blowup.prop.non.cluster}
    \end{align}
  \item For all $x_j,x_k\in \xv^c, x_k \ne x_j,$
    \begin{align}
      \angle(z_j(\lambda),z_k(\lambda))&\ge 2\pi \lambda \tau h \ge \frac{\pi \tau}{2d-1}\Omega h.\label{eq.blowup.prop.cluster}
    \end{align}
  \end{enumerate} 			
\end{proposition}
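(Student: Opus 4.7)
The plan is to handle the two assertions of the proposition separately, since they have quite different flavors: assertion (2) concerning the cluster–cluster pairs holds for every $\lambda$ in the given range, and is a direct calculation, whereas assertion (1) requires a measure/counting argument to locate a suitable subinterval $I'$.

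First I would dispatch assertion (2). For any $x_j,x_k \in \xv^c$, the clustered configuration gives $|x_j - x_k| \in [\tau h, h]$. Combined with the hypothesis $\Omega \le (2d-1)/(2h)$, this implies $\lambda \le \Omega/(2d-1) \le 1/(2h)$, so $2\pi\lambda|x_j-x_k| \le 2\pi \cdot (1/(2h)) \cdot h = \pi$. Therefore the principal-value reduction in $\Arg(z_j(\lambda)/z_k(\lambda)) = \Arg(e^{2\pi i\lambda(x_j-x_k)})$ is trivial, and $\angle(z_j(\lambda),z_k(\lambda)) = 2\pi\lambda|x_j-x_k| \ge 2\pi\lambda\tau h$. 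Using the lower endpoint $\lambda \ge \Omega/(2(2d-1))$ then gives the stated bound $\pi\tau\Omega h/(2d-1)$. This holds for every $\lambda\in I$, in particular for every $\lambda\in I'\subseteq I$, irrespective of how $I'$ is chosen.

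The heart of the proof is assertion (1), which I would approach via Lemma \ref{lemma.uniform.blowup}. For each unordered pair $(x_\ell,x_j)$ with $x_\ell \in \xv \setminus \xv^c$ and $j\ne\ell$ (of which there are at most $\binom{d}{2}$), I set $\Delta_{\ell,j} = |x_\ell-x_j| \in [\eta,1]$ and apply the lemma with $\alpha = 1/d^2$. This yields that the ``bad'' set
\[
\Sigma^{1/d^2}_{\ell,j}(I) = \{\lambda\in I : \angle(z_\ell(\lambda),z_j(\lambda)) \le 1/d^2\}
\]
is a union of at most $\lfloor|I|\Delta_{\ell,j}\rfloor + 1$ intervals, each of length at most $1/(d^2\pi\Delta_{\ell,j})$. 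Summing over all pairs, using $|I| = 1/\eta$ and $\Delta_{\ell,j} \ge \eta$, the total Lebesgue measure of the joint bad set $B$ is controlled by $|I|/\pi$, while the total number of connected components of $B$ is likewise bounded via the same per-pair estimates.

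The final step is a pigeonhole/measure argument applied to $I\setminus B$: its total measure is at least $|I|(1-1/\pi)$, and the number of its connected components is at most one more than the number of components of $B$. Dividing the good measure by this upper bound on the number of good components produces a lower bound on the length of the longest good connected subinterval, which I would then verify to meet $(2d^2\eta)^{-1}$. I expect the main technical obstacle to lie here: a straightforward sum of $\lfloor|I|\Delta_{\ell,j}\rfloor + 1$ across pairs can overcount, and a sharper accounting — exploiting that within each pair the bad intervals are exactly periodic with period $1/\Delta_{\ell,j}$, so that their union may have far fewer components than the naive bound suggests — is likely needed to nail down the constant in the stated lower bound on $|I'|$.
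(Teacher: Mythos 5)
Your argument for assertion (2) is correct and identical to the paper's. For assertion (1) you follow the same structure as the paper --- Lemma \ref{lemma.uniform.blowup} per pair, a per-pair measure bound, then a pigeonhole over connected components --- and you are right to single out the component count as the crux. However, the remedy you sketch (that periodicity of the bad sets might reduce the number of components) does not help, and this step is in fact more problematic than your caveat suggests.

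By Lemma \ref{lemma.uniform.blowup}, $\Sigma^{\alpha}_{\ell,j}(I)$ is a union of at most $\lfloor|I|\Delta_{\ell,j}\rfloor+1$ intervals, where $\Delta_{\ell,j}=|x_\ell-x_j|\in[\eta,1]$. With $|I|=1/\eta$, the inequality $\Delta_{\ell,j}\geq\eta$ correctly controls the per-interval \emph{length} ($\le\alpha/(\pi\Delta_{\ell,j})\le\alpha/(\pi\eta)$) and hence the per-pair measure, but it bounds the \emph{count} in the wrong direction: $\lfloor|I|\Delta_{\ell,j}\rfloor+1$ can be as large as $\lfloor 1/\eta\rfloor+1$ when $\Delta_{\ell,j}$ is near $T=1$, whereas the paper's own proof quotes $\lfloor|I|\eta\rfloor+1=2$, implicitly substituting $\eta$ for $\Delta_{\ell,j}$ where the inequality runs the other way. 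Periodicity works against you here, not for you: for a pair with $\Delta_{\ell,j}$ near $1$ the bad intervals are spaced about $1$ apart in $\lambda$, so the complement within $I$ for that single pair already has on the order of $1/\eta$ components, none longer than $1-\alpha/\pi<1$. Hence once $\eta<(2d^2)^{-1}$, no subinterval of length $(2d^2\eta)^{-1}>1$ can avoid all bad $\lambda$'s (take $d=3$, $p=2$, the cluster near $-1/2$ and the non-cluster node at $+1/2$, with $\eta$ chosen small). Carrying out the pigeonhole with the true component count yields a good subinterval of length of order $1/d^2$, independent of $\eta$, which is strictly weaker than the stated $(2d^2\eta)^{-1}$; so the conclusion --- and its invocation in the proof of Proposition \ref{prop.elimiate.period}, which uses that specific length --- needs to be reworked rather than merely tightened by a sharper count.
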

\begin{proof}

  Let us first prove that assertion \eqref{eq.blowup.prop.cluster} holds
  for any
  $\frac{1}{2}\frac{\Omega}{2d-1} \le \lambda \le
  \frac{\Omega}{2d-1}$. 

  \smallskip

  Let $x_j, x_k$, $j>k$, be two cluster nodes.  The
  angular distance between the mapped cluster nodes
  $z_j = z_j(\lambda)=e^{2\pi i \lambda x_j},\ z_k = z_k(\lambda) = e^{2\pi i
    \lambda x_k}$, is
  $$\angle(z_j,z_k) =\left| \Arg(e^{2\pi i \lambda
    (x_j-x_k)})\right|.$$

  By assumption
  $\Omega h \le \frac{2d-1}{2}$, then 
  $\lambda \le \frac{1}{2h}$ and then
  $0 \le 2\pi \lambda (x_j-x_k) \le 2\pi \lambda h \le \pi$. With this
  we have
  $$\angle(z_j,z_k)=2\pi \lambda (x_j-x_k)\ge 2\pi \lambda \tau h.$$
  By assumption
  $ \lambda \ge \frac{1}{2}\frac{\Omega}{2d-1}$. Then,
  $\angle(z_j,z_k) \ge \frac{\pi \tau}{2d-1} \Omega
  h$. This concludes the proof of assertion \eqref{eq.blowup.prop.cluster}.

  Using Lemma \ref{lemma.uniform.blowup} we now
  prove that assertion
  \eqref{eq.blowup.prop.non.cluster} holds for any
  interval $I=[a,b] \subset \R$ of length
  $|I|=\frac{1}{\eta}$.  Let $I$ be such an
  interval. For each $0<\alpha \le \pi$ consider the
  set
  $$\Sigma^{\alpha}(I) = \left\{\lambda \in I : \exists x_{\ell} \in
    \xv\setminus \xv^c \text{ s.t. } \min_{1 \le j \le d,
      j\ne \ell}\angle(z_{\ell}(\lambda),z_j(\lambda))\le \alpha \right\}.$$ 

  We then have
  $$
  \Sigma^{\alpha}(I)=\bigcup_{x_{\ell} \in \xv\setminus\xv^c }\bigcup_{x_j \ne x_{\ell}}
  \Sigma^{\alpha}_{\ell,j}(I),
  $$
  where $\Sigma^{\alpha}_{\ell,j}$ are given by
  \eqref{eq:sig.j.k.def}.  By Lemma \ref{lemma.uniform.blowup} each
  $\Sigma^{\alpha}_{\ell,j}(I)$ above is a union of at most
  $\left\lfloor |I| \eta\right\rfloor +1 = 2$ intervals, the
  length of each interval is at most
  $\frac{\alpha}{\pi} \frac{1}{\eta}$.  Therefore $\Sigma^{\alpha}(I)$
  is a union of at most $K=\binom{d}{2}2=d(d-1) $ intervals.
  Moreover, let $\nu$ denote the Lebesgue measure on $\R$, then
  \begin{equation}\label{eq.number.of.bad.intervals}
    \nu(\Sigma^{\alpha}(I)) \le K \frac{\alpha}{\pi} \frac{1}{\eta} \le d(d-1)
    \frac{\alpha}{\pi} \frac{1}{\eta}\le d^2 \alpha \frac{1}{2\eta}.
  \end{equation}
  
  Put $\alpha' = \frac{1}{d^2}$ then by \eqref{eq.number.of.bad.intervals}
  \begin{equation}\label{eq.measure.bad}
    \nu(\Sigma^{\alpha'}(I)) \le \frac{1}{2\eta}.
  \end{equation}
  Now consider the complement set of $\Sigma^{\alpha'}(I)$ with respect to $I$, 
  $$
  (\Sigma^{\alpha'}(I))^c = \left\{\lambda \in I : \forall x_{\ell} \in \xv\setminus \xv^c, \min_{1 \le j \le d,
      j\ne \ell}\angle(z_{\ell}(\lambda),z_j(\lambda)) > \frac{1}{d^2} \right\}.
  $$ 
  By \eqref{eq.measure.bad}
  \begin{equation}\label{eq.measure.good}
    \nu\big((\Sigma^{\alpha'}(I))^c\big) \ge |I| - \frac{1}{2\eta} = \frac{1}{\eta} - \frac{1}{2\eta} = \frac{1}{2\eta}.					
  \end{equation}
  In addition, since $\Sigma^{\alpha'}(I)$ is a union of at most $K=d(d-1)$ intervals, 
  then $\big(\Sigma^{\alpha'}(I)\big)^c$ is a union of at most 
  \begin{equation}\label{eq.number.of.good.intervals}
    L=K+1=d(d-1)+1\le d^2
  \end{equation}
  intervals.
  Using \eqref{eq.measure.good} and \eqref{eq.number.of.good.intervals}, the average size of these intervals is bounded as follows: 
  $$\frac{\nu\big((\Sigma^{\alpha'}(I))^c\big)}{L} \ge \frac{1}{d^2} \frac{1}{2\eta}.$$
  We therefore conclude that $\big(\Sigma^{\alpha'}(I)\big)^c$ contains an interval of length greater or
  equal to
  $\frac{1}{d^2} \frac{1}{2\eta}$. This proves assertion \eqref{eq.blowup.prop.non.cluster} of Proposition
  \ref{prop.good.blowup}.
\end{proof}

\subsection{Error sets of admissible decimation maps}\label{sec.error.sets}
Throughout this section we fix a signal
$F=(\av,\xv) \in \P_d,\ \av=(a_1,\ldots,a_d), \ \xv=(x_1,\ldots,x_d)
\subset \left[-\frac{1}{2},\frac{1}{2}\right]$, such that
$\xv$ forms a $(p,h,1,\tau,\eta)$-clustered configuration, with
$\xv^c=\{x_\kappa, x_{\kappa+1},\allowbreak \ldots, x_{\kappa+p-1}\}$
and $\|\av\| \geq m > 0$. We also fix $\Omega>0$ such that
$\Omega h \le \frac{1}{20d}$.

\smallskip

Proposition \ref{prop.good.blowup} demonstrated the existence of certain 
$\lambda$-decimation maps which achieve good separation of the
non-cluster nodes. We define the set $\Lambda(\xv)$ to consist of all
such admissible $\lambda$'s, as follows. 
\begin{definition}[Admissible blowup factors]\label{def.good.blowup.set}
  For each $F=(\av,\xv) \in \P_d, \ \xv=(x_1,\allowbreak\ldots,x_d)$,
  such that $\xv$ forms a $(p,h,1,\tau,\eta)$-clustered configuration
  and $z_j = z_{j}(\lambda) = e^{2\pi i \lambda x_j},\ j=1,\ldots,d$ and
  $\Omega > 0$, we define the set of admissible blowup factors
  $\Lambda(\xv) = \Lambda_{\Omega,d}(\xv)$ as the set of all
  $\lambda \in
  \left[\frac{1}{2}\frac{\Omega}{2d-1},\frac{\Omega}{2d-1}\right]$
  satisfying:
  \begin{enumerate}
  \item For all $\ell \ne j$ such that $x_{\ell}\in \xv\setminus\xv^c$ and $x_j\in \xv$,
    \begin{align}
      \angle(z_{\ell}(\lambda),z_j(\lambda))&\ge \frac{1}{d^2}.
    \end{align}
  \item For all $j \ne k$ such that $x_j,x_k\in \xv^c$
    \begin{align}
      \angle(z_j(\lambda),z_k(\lambda))&\ge 2\pi \lambda \tau h \ge \frac{\pi }{2d-1}\Omega \tau h.
    \end{align}
  \end{enumerate} 	 
\end{definition}

\subsubsection{The local geometry of admissible decimation maps}\label{sec.local}

The next result gives an explicit description of a neighborhood around
$F$ where the map $FM_{\lambda}$ is injective (and, therefore, we can
speak about a local inverse).

\begin{definition}
For each $\alpha,\beta>0$ we denote by $H_{\alpha,\beta}(F)$ the closed polydisc
$$H_{\alpha,\beta}(F) = \left\{ (\av',\xv') \in \barP \ : \|\av'-\av\| \le \alpha,\ \|\xv'-\xv\| \le \beta \right\},$$
and by $H^{\mathrm{o}}_{\alpha,\beta}(F)$ the interior of $H_{\alpha,\beta}(F)$.
\end{definition}

The following is proved in Appendix \ref{appendix.proof.one.to.one}.

\begin{proposition}[One-to-one]\label{prop.one.to.one}
  For each $\lambda \in \Lambda(\xv)$ 
  the map $FM_{\lambda}$ is injective in the open polydisc $U = H^{\mathrm{o}}_{m,\frac{\tau h}{2 \pi}}(F) \subset
  \barP$.
\end{proposition}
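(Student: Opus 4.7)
The plan is to reduce injectivity to the classical Prony uniqueness via the substitution $z_j=e^{2\pi i\lambda x_j}$, under which $FM_\lambda(\av,\xv)=V(\vec z)\av$ is a $2d$-equation Vandermonde evaluation. Prony's theorem then yields uniqueness once I verify (i) the $d$-tuple $\vec z'=(z_1',\dots,z_d')$ generated by any $F'=(\av',\xv')\in U$ consists of pairwise distinct entries and (ii) all amplitudes $a_j'$ are nonzero. Property (ii) follows at once from $|a_j'-a_j|<m$ together with the amplitude lower bound on $F$. The heart of the proof is therefore (i), and once it is in hand I will close by a permutation-matching argument that exploits the separation data encoded in $\lambda\in\Lambda(\xv)$.

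For (i) I split into two cases. If $x_j,x_k\in\xv^c$, the polydisc gives $|x_j'-x_k'|\ge\tau h(1-1/\pi)>0$, while $\Omega h\le 1/(20d)$ makes $\lambda h$ so small that $|\lambda(x_j'-x_k')|<1$; the complex number $\lambda(x_j'-x_k')$ is therefore not a nonzero integer, and $z_j'\ne z_k'$. If at least one of $x_j,x_k$ lies outside $\xv^c$, the defining property of $\Lambda(\xv)$ gives $\angle(z_j,z_k)\ge 1/d^2$; writing $x_j'=x_j+\delta_j$ with $|\delta_j|<\tau h/(2\pi)$ I establish $\angle(z_j,z_j')\le 2\pi\lambda|\Re(\delta_j)|\le\lambda\tau h\ll 1/d^2$, so the triangle inequality for the angular distance preserves $\angle(z_j',z_k')>0$. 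The main obstacle in this step is that $\bar{\P}_d$ admits complex nodes, so $|z_j'|$ need not equal $1$; one must verify all perturbation estimates in $\mathbb{C}^*$ rather than on the unit circle, which the above bound, valid for complex $\delta_j$, handles uniformly.

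Now suppose $FM_\lambda(F')=FM_\lambda(F'')$ with $F',F''\in U$. Let $W$ be the set of distinct values in $\{z_j'\}\cup\{z_k''\}$; grouping the identity $\sum_j a_j'(z_j')^k-\sum_j a_j''(z_k'')^k=0$ for $k=0,\dots,2d-1$ by $W$ and using that the square Vandermonde in the at most $2d$ distinct $w\in W$ is nonsingular yields $\sum_{z_j'=w}a_j'=\sum_{z_k''=w}a_k''$ for every $w$. By (i) each side has at most one term, and by (ii) the nonzero amplitudes force a pairing $z_j'=z_{\sigma(j)}''$, $a_j'=a_{\sigma(j)}''$ for some permutation $\sigma$. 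Combining $z_j'=z_{\sigma(j)}''$ with $\angle(z_j,z_j'),\angle(z_{\sigma(j)},z_{\sigma(j)}'')\le\lambda\tau h$ gives $\angle(z_j,z_{\sigma(j)})\le 2\lambda\tau h$, which contradicts both lower bounds $1/d^2$ and $\tfrac{\pi}{2d-1}\Omega\tau h$ in the definition of $\Lambda(\xv)$ unless $\sigma(j)=j$. Finally $z_j'=z_j''$ and $|\lambda(x_j'-x_j'')|<\lambda\tau h/\pi<1$ force $x_j'=x_j''$, and $a_j'=a_j''$ follows from the pairing, so $F'=F''$.
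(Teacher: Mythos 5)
Your proof is correct and follows essentially the same route as the paper's: both reduce to Prony uniqueness via the substitution $z_j=e^{2\pi i\lambda x_j}$, use the $\Lambda(\xv)$ separation bounds together with the polydisc radius to show the mapped nodes stay pairwise distinct and the amplitudes nonvanishing, rule out a nontrivial matching permutation by the same separation estimates, and finally eliminate the exponential periodicity because the node polydisc is narrower than $1/\lambda$. The paper merely packages these steps as a formal factorization $FM_\lambda=PM\circ g_\lambda$ with separate injectivity lemmas (Propositions \ref{prop.geo.local.W} and \ref{prop.g.injective}), whereas you interleave them and re-derive the Prony uniqueness inline via a Vandermonde argument; this is an organizational, not substantive, difference.
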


Next we can estimate the Lipschitz constants of the inverse map
$FM_{\lambda}^{-1}$, using the previously established general bounds
in Proposition \ref{prop.uniform.jacobian.bounds}.
  
  \begin{proposition}\label{prop.uniform.jacobian.bounds.clustered.signal}
    Let $H=H_{\frac{m}{2},\frac{\tau h}{4 \pi }}(F) \subset U = H^{\mathrm{o}}_{m,\frac{\tau h}{2 \pi}}(F)$.
    Then, for each $F' \in H$: 
    \begin{enumerate}
      \item The Jacobian matrix of $FM_{\lambda}$ at $F'$, denoted by $J_{\lambda}(F')$, is
    	non-degenerate.
      \item Put
  		$J^{-1}_\lambda(F')=\begin{bmatrix} A\\ \tilde{B} \end{bmatrix}$, where $A,\tilde{B}$ are $d\times 2d$.
  		Then, the $\ell_1$ norms of the rows of the blocks $A,\tilde{B}$ are bounded as follows:    
	    \begin{align}
	      \sum_{k=1}^{2d} |A_{j,k}| 
	      &\le \tilde{C},&
	                       x_j \in \xv\setminus\xv^c,\label{uniform.bound.amp.non.cluster}\\
	      \sum_{k=1}^{2d} |\tilde{B}_{j,k}| 
	      & \le \tilde{C}\frac{1}{\Omega},& 
	                                        x_j \in \xv\setminus\xv^c,\label{uniform.bound.nodes.non.cluster}\\
	      \sum_{k=1}^{2d} |A_{j,k}| 
	      &\le \tilde{C}(\Omega \tau h)^{-2p+1},&
	                                         x_j \in \xv^{c},\label{uniform.bound.amp.cluster}\\
	      \sum_{k=1}^{2d} |\tilde{B}_{j,k}| 
	      & \le \tilde{C}\frac{1}{\Omega} (\Omega \tau h)^{-2p+2},& 
	                                                           x_j \in \xv^{c},\label{uniform.bound.nodes.cluster}
	    \end{align}
	    where $\tilde{C} = \tilde{C}(m,d,p)$ is a constant depending
	    only on $d,m,p$.
    \end{enumerate}
  \end{proposition}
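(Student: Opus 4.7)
The plan is to reduce the statement directly to Proposition \ref{prop.uniform.jacobian.bounds} by verifying that, for every $F'=(\av',\xv')\in H$, the perturbed mapped nodes $z_j'=e^{2\pi i\lambda x_j'}$ and amplitudes $a_j'$ satisfy the hypotheses of that proposition with parameters $\tilde h, \tilde\eta$ of the expected orders, and then translating the generic bounds \eqref{eq.prop.1}--\eqref{eq.prop.4} into \eqref{uniform.bound.amp.non.cluster}--\eqref{uniform.bound.nodes.cluster}. The key is that $\lambda\in\Lambda(\xv)$ together with the smallness of the polydisc $H$ are tuned precisely so that a constant-factor degradation of the separation suffices.

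Concretely, I would first handle the amplitudes. Since $\|\av'-\av\|\le m/2$, the triangle inequality gives $|a_j'|\ge m/2$ for all $j$, which is the amplitude hypothesis of Proposition \ref{prop.uniform.jacobian.bounds}. Next, for the nodes, since $x_j'$ is complex but $|x_j'-x_j|\le\tau h/(4\pi)$, the elementary estimate $|e^{2\pi i\lambda x_j'}-e^{2\pi i\lambda x_j}|\le 2\pi\lambda|x_j'-x_j|$, combined with $\lambda\le\Omega/(2d-1)$ and the assumption $\Omega h\le 1/(20d)$, shows two things simultaneously: $|z_j'|\in[\tfrac12,2]$ (since the imaginary part of $x_j'$ is tiny), and $|z_j'-z_j|\le\lambda\tau h/2\le\Omega\tau h/(2(2d-1))$.

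Now I plug in the separation estimates guaranteed by $\lambda\in\Lambda(\xv)$ and Lemma \ref{lem.ang.dist.eucl.}. For a pair of cluster nodes one gets $|z_j-z_k|\ge (2/\pi)\cdot(\pi\Omega\tau h/(2d-1))=2\Omega\tau h/(2d-1)$, so after the perturbation
\[
|z_j'-z_k'|\ge \frac{2\Omega\tau h}{2d-1}-\frac{\Omega\tau h}{2d-1}=\frac{\Omega\tau h}{2d-1}=:\tilde h.
\]
For a pair involving at least one non-cluster node the angular estimate gives $|z_\ell-z_j|\ge 2/(\pi d^2)$, and since the perturbation $\Omega\tau h/(2(2d-1))$ is much smaller than this under $\Omega h\le 1/(20d)$, we obtain $|z_\ell'-z_j'|\ge 1/(\pi d^2)=:\tilde\eta$. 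The condition $\tilde\eta\ge\tilde h$ holds for the same reason, and both $\tilde h$ and $\tilde\eta$ depend on the relevant parameters in the expected way.

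With these verifications in hand, Proposition \ref{prop.uniform.jacobian.bounds} applies at $F'$, giving non-degeneracy of $J_\lambda(F')$ and bounds on the row sums in terms of $K_i(m,\tilde\eta,d,p)$, $\tilde h$, and $\lambda$. Since $\tilde\eta$ is an absolute function of $d$, each $K_i$ collapses to a constant $\tilde C(m,d,p)$; substituting $\tilde h=\Omega\tau h/(2d-1)$ into the $\tilde h^{-2p+1}$ and $\tilde h^{-2p+2}$ factors, and $\lambda\asymp\Omega$ into the $1/\lambda$ factors, transforms \eqref{eq.prop.1}--\eqref{eq.prop.4} into the four asserted bounds \eqref{uniform.bound.amp.non.cluster}--\eqref{uniform.bound.nodes.cluster}. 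The only real care needed is in the bookkeeping for $\tilde h$ and in checking that the shrinkage of the polydisc by the factor $1/(4\pi)$ versus the $1/(2\pi)$ appearing in the injectivity statement is enough to absorb the constants in the perturbation of $z_j$; no genuinely new analytic input beyond Proposition \ref{prop.uniform.jacobian.bounds} and Lemma \ref{lem.ang.dist.eucl.} is required.
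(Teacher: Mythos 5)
Your proposal is correct and takes essentially the same route as the paper's proof: show that $|a'_j|\ge m/2$ and that the perturbed mapped nodes $z'_j$ retain, by the triangle inequality and $\lambda\in\Lambda(\xv)$ together with Lemma~\ref{lem.ang.dist.eucl.}, separations of order $\tilde h\asymp\Omega\tau h$ (cluster pairs) and $\tilde\eta\asymp d^{-2}$ (non-cluster pairs), then invoke Proposition~\ref{prop.uniform.jacobian.bounds} and substitute $\lambda\asymp\Omega$. Your intermediate constants differ slightly from the paper's (e.g.\ $\tilde\eta=1/(\pi d^2)$ vs.\ $1/(2d^2)$) and you are a bit more explicit about verifying $|z'_j|\in[\tfrac12,2]$ and $\tilde\eta\ge\tilde h$, which the paper leaves implicit, but these are inessential variations.
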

  \begin{proof}
    Let
    $F'=(\av',\xv')\in H, \ \av'=(a'_1,\ldots,a'_d),\
    \xv'=(x_1',\ldots,x_d')$.  Let
    $z'_j=z'_j(\lambda)=e^{2\pi i \lambda x'_j}$ and let
    $z_j = z_{j}(\lambda) = e^{2\pi i \lambda x_j},\ j=1,\ldots,d$.
    
    By the integral mean value theorem, for each $j=1,\ldots,d$, 
    $$
    |z'_j-z_j| = \left|e^{2\pi i \lambda x'_j} - e^{2\pi i \lambda x_j}\right|\le \lambda\tau h.
    $$
    
    \smallskip
    
    Let $\ell \ne j$ such that $x_{\ell}\in \xv\setminus\xv^c$ and $x_j\in \xv$.
    Since $\lambda \in \Lambda(x)$,
    \begin{align*}
      \angle(z_{\ell},z_j)&\ge \frac{1}{d^2}.
    \end{align*}
    Then by \eqref{eq.angle.to.euclidien}
    $$|z_{\ell} - z_j| \ge \frac{2}{\pi d^2}.$$
    We get that
    $$|z_{\ell}' - z_j'|\ge |z_{\ell} - z_j| -|z'_{\ell} - z_{\ell}| - |z_j' - z_j| \ge |z_{\ell} - z_j| -2\lambda\tau h \ge 
    \frac{2}{\pi d^2} - 2\lambda\tau h.$$ 
    With $\Omega h \le \frac{1}{20 d}$ and $\lambda \le \frac{\Omega}{2d-1}$ by assumption,
    we have that $2\lambda\tau h \le \frac{1}{3 \pi d^2}$ then 
    $$|z_{\ell}' - z_j'| \ge \frac{2}{\pi d^2} - 2\lambda\tau h \ge \frac{2}{\pi d^2} - \frac{1}{3\pi d^2} \ge
    \frac{1}{2d^2}.$$ We conclude that for each $\ell \ne j$ such that
    $x_{\ell}\in \xv\setminus\xv^c$ and $x_j\in \xv$
    \begin{equation}\label{eq.non.cluster.z.euclidian.seperation}
      |z_{\ell}' - z_j'| \ge \frac{1}{2d^2}.
    \end{equation}
    
    Let $j \ne k$ such that $x_j,x_k\in \xv^c$. $\lambda \in \Lambda(\xv)$ then
    \begin{align*}
      \angle(z_j,z_k)&\ge 2\pi \lambda \tau h.
    \end{align*}
    Then by \eqref{eq.angle.to.euclidien}
    $$|z_j - z_k| \ge 4\lambda \tau h.$$
    With a similar argument as above, we get that
    $$|z_j' - z_k'|\ge |z_j - z_k| - 2\lambda\tau h \ge 2\lambda \tau h.$$
    Using
    $\lambda \in \Lambda(\xv) \Rightarrow \lambda \ge
    \frac{\Omega}{2(2d-1)}$, we conclude that for each $j \ne k$ such
    that $x_j,x_k\in \xv^c$
    \begin{equation}\label{eq.cluster.z.euclidian.seperation}
      |z_j' - z_k'|\ge 2\lambda \tau h \ge \frac{1}{2d-1}\Omega \tau h.		
    \end{equation}
    
    Now using \eqref{eq.non.cluster.z.euclidian.seperation} and \eqref{eq.cluster.z.euclidian.seperation} 
    we invoke Proposition \ref{prop.uniform.jacobian.bounds} with $\tilde{h} = \frac{1}{2d-1}\Omega \tau h$ and
    $\tilde{\eta} = \frac{1}{2d^2}$ and as a result prove Proposition \ref{prop.uniform.jacobian.bounds.clustered.signal} with
    \begin{align*}
      \tilde{C} = \left(2d-1\right)^{2p-1} \max
      \bigg[K_1\left(\frac{1}{2d^2},d,p\right),K_2\left(m,\frac{1}{2d^2},d,p\right),\\K_3\left(\frac{1}{2d^2},d,p\right),K_4\left(m,\frac{1}{2d^2},d,p\right) \bigg].
    \end{align*}
  \end{proof}

\begin{definition}\label{def.r.cube}
	For $\vec{v} \in \C^{d}$ and $r>0$, we denote by $Q_r(\vec{v})$ the closed cube
	of radius $r$ centered at $\vec{v}$:
	$$Q_{r}(\vec{v}) = Q_{r,d}(\vec{v}) = \left\{ \vec{u} \in \C^{d} : \|\vec{u}-\vec{v}\| \le r \right\}.$$
\end{definition}

\begin{proposition}\label{prop.lipshitz}
  Let $U = H^{\mathrm{o}}_{m,\frac{\tau h}{2 \pi}}(F)$ and $H=H_{\frac{m}{2},\frac{\tau h}{4 \pi }}(F) \subset U$. 
  Let $\lambda \in \Lambda(\xv)$ and let $\vec{\mu}_\lambda = FM_{\lambda}(F)$, then there exists a constant
  $\tilde{C}_3=\tilde{C}_3(m,d,\allowbreak p)$ such that for $R = \tilde{C}_3 (\Omega \tau h)^{2p-1}$,
  $$\FMG(H) \supseteq Q_{R}(\vec{\mu}_\lambda).$$
  Furthermore for $V_\lambda = \FMG(U)$ let 
  $$\FMG^{-1}:V_\lambda \rightarrow U$$ 
  be the local inverse of $\FMG$, i.e. for all $F' \in U$ we have
  $\FMG^{-1}(\FMG(F'))=F'$.  For each $1\le j \le d$, let
  $P_{\av,j},P_{\xv,j}: \barP \rightarrow \C$ be the projections onto the
  $j^{th}$ amplitude and the $j^{th}$ node coordinates respectively.
  Then $\FMG^{-1}$ is Lipschitz on $Q_R(\vec{\mu}_\lambda)$ with the following
  bounds:
  \begin{align*}
    	\left|P_{\xv,j} FM^{-1}_{\lambda}(\vec{\mu}') - P_{\xv,j} FM^{-1}_{\lambda}(\vec{\mu}'')\right| & \le \tilde{C}_{1}
    	\frac{1}{\Omega} \|\vec{\mu}''-\vec{\mu}'\| \times 
         \begin{cases}                                                                                    
         	1 & x_j \in \xv\setminus\xv^c \\
            (\Omega\tau h)^{-2p+2} & x_j \in \xv^{c} 
         \end{cases},\\
    \left|P_{\av,j} FM^{-1}_{\lambda}(\vec{\mu}') - P_{\av,j} FM^{-1}_{\lambda}(\vec{\mu}'')\right| & \le
    \tilde{C}_{2}\|\vec{\mu}''-\vec{\mu}'\| \times \begin{cases} 1 & x_j \in \xv\setminus\xv^c \\
      (\Omega \tau h)^{-2p+1} & x_j \in \xv^{c}
    \end{cases},
% ,\\
%     \left|P_{\av,j} FM^{-1}_{\lambda}(\vec{\mu}') - P_{\av,j} FM^{-1}_{\lambda}(\vec{\mu}'')\right| & \le \tilde{C}_{1}(\Omega
%                                                                                       h)^{-2p+1}\|\vec{\mu}''-\vec{\mu}'\|,& x_j \in \xv^{c}, \\
%     \left|P_{\xv,j} FM^{-1}_{\lambda}(\vec{\mu}') - P_{\xv,j} FM^{-1}_{\lambda}(\vec{\mu}'')\right| & \le \tilde{C}_{3}
%                                                                                       \frac{1}{\Omega} (\Omega h)^{-2p+2}\|\vec{\mu}''-\vec{\mu}'\|,& x_j \in \xv^{c},
  \end{align*}
  for each $\vec{\mu}'',\vec{\mu}' \in Q_{R}(\vec{\mu}_\lambda)$, where
  $\tilde{C}_1 = \tilde{C}_{1}(m,d,p), \ \tilde{C}_2 =
  \tilde{C}_{2}(m,d,p)$ are constants depending only on
  $d,m,p$ and $\tilde{C}_1 \tilde{C}_3 \le 1$.
\end{proposition}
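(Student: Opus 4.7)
The plan is to apply a quantitative inverse function theorem (Theorem \ref{thm.inverse.function}) to $\FMG$ on $U$, using the anisotropic row-wise bounds on $J_\lambda^{-1}$ from Proposition \ref{prop.uniform.jacobian.bounds.clustered.signal}, and to read off the coordinate-wise Lipschitz bounds by integrating these row-norms along straight line segments in the data space. Injectivity on $U$ is handed to us by Proposition \ref{prop.one.to.one}, so that $\FMG^{-1}\colon V_\lambda\to U$ is set-theoretically unambiguous, and Proposition \ref{prop.uniform.jacobian.bounds.clustered.signal} guarantees that $J_\lambda$ is non-degenerate throughout $H$ with explicit bounds on the $\ell_1$-norms of the rows of $J_\lambda^{-1}$.

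Next, I would establish the coverage $\FMG(H)\supseteq Q_R(\vec{\mu}_\lambda)$ by a standard continuation argument. Fix $\vec{\mu}'\in Q_R(\vec{\mu}_\lambda)$ and solve the initial value problem
$$
\frac{d\tilde{F}}{dt}=J_\lambda^{-1}\bigl(\tilde{F}(t)\bigr)\,(\vec{\mu}'-\vec{\mu}_\lambda),\qquad \tilde{F}(0)=F,
$$
extending the solution as long as $\tilde{F}(t)\in H$. On this interval, integrating the inequalities \eqref{uniform.bound.amp.non.cluster}--\eqref{uniform.bound.nodes.cluster} and using $\|\vec{\mu}'-\vec{\mu}_\lambda\|\le R$ yields, with $R=\tilde{C}_3(\Omega\tau h)^{2p-1}$, the bounds $|P_{\xv,j}\tilde{F}(t)-x_j|\le \tilde{C}\tilde{C}_3\,\tau h$ on cluster nodes and $|P_{\av,j}\tilde{F}(t)-a_j|\le \tilde{C}\tilde{C}_3$ on cluster amplitudes (the corresponding non-cluster quantities being strictly smaller because $\Omega\tau h\ll 1$). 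Choosing $\tilde{C}_3 \le \tilde{C}^{-1}\min\bigl(1/(4\pi),\,m/2\bigr)$ keeps $\tilde{F}(t)$ strictly inside $H$ for every $t$ in the maximal existence interval, so the usual bootstrap extends the solution up to $t=1$, giving $\tilde{F}(1)\in H$ with $\FMG(\tilde{F}(1))=\vec{\mu}'$.

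For the Lipschitz bounds on $\FMG^{-1}$ restricted to $Q_R(\vec{\mu}_\lambda)$, I would rerun the same ODE argument for two arbitrary points $\vec{\mu}',\vec{\mu}''\in Q_R(\vec{\mu}_\lambda)$, parameterising the straight segment between them and pulling it back to a $C^1$ curve inside $H$ by a parallel bootstrap. The chain rule then gives
$$
P_{\xv,j}\FMG^{-1}(\vec{\mu}'')-P_{\xv,j}\FMG^{-1}(\vec{\mu}') = \int_0^1 \sum_{k=1}^{2d} \tilde{B}_{j,k}\bigl(\tilde{F}(t)\bigr)\,(\vec{\mu}''-\vec{\mu}')_k\,dt,
$$
and the analogous identity with $A_{j,k}$ for the amplitude projections. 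Bounding the integrand pointwise by the $\ell_1$ norm of the relevant row times $\|\vec{\mu}''-\vec{\mu}'\|$, and plugging in the row bounds from Proposition \ref{prop.uniform.jacobian.bounds.clustered.signal}, produces exactly the coordinate-wise Lipschitz inequalities claimed, with $\tilde{C}_1=\tilde{C}_2=\tilde{C}$. The normalization $\tilde{C}_1\tilde{C}_3\le 1$ is built into the choice of $\tilde{C}_3$ above.

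The main obstacle will be the coupled continuation step: the solution of the ODE must remain strictly inside $H$ along the entire segment so that Proposition \ref{prop.uniform.jacobian.bounds.clustered.signal} can be invoked pointwise, and this requires a careful balancing of $R$ against the cluster-direction width $\tau h/(4\pi)$ of $H$ (not just the amplitude width $m/2$). The anisotropy of the bounds --- cluster coordinates carry blown-up factors $(\Omega\tau h)^{-2p+1}$ and $(\Omega\tau h)^{-2p+2}$ compared with the non-cluster ones --- is precisely what forces the binding constraint on $R$ to come from the cluster block, and hence why the exponent in $R=\tilde{C}_3(\Omega\tau h)^{2p-1}$ is $2p-1$ rather than a smaller number.
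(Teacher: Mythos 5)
Your proposal is correct and, at the level of this proposition, follows the same architecture as the paper: cite Proposition \ref{prop.one.to.one} for injectivity on $U$, cite Proposition \ref{prop.uniform.jacobian.bounds.clustered.signal} for the row-wise $\ell_1$ bounds on $J_\lambda^{-1}$ on $H$, and then deduce both the coverage $\FMG(H)\supseteq Q_R(\vec{\mu}_\lambda)$ and the coordinate-wise Lipschitz bounds. Your arithmetic is right: multiplying the cluster-node row bound $\tilde C\,\Omega^{-1}(\Omega\tau h)^{-2p+2}$ by $R=\tilde C_3(\Omega\tau h)^{2p-1}$ gives $\tilde C\tilde C_3\,\tau h$, which must be at most $\tau h/(4\pi)$; the amplitude row bound forces $\tilde C\tilde C_3\le m/2$; and your choice $\tilde C_3\le\tilde C^{-1}\min(1/(4\pi),\,m/2)$ reproduces the paper's $\tilde C_3=\min\!\bigl(\frac{m}{2\tilde C},\frac{1}{4\pi\tilde C}\bigr)$, with $\tilde C_1=\tilde C_2=\tilde C$ and $\tilde C_1\tilde C_3\le 1$.

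Where you diverge from the paper is in how the quantitative inverse-function content itself is established. The paper invokes its Theorem \ref{thm.inverse.function} as a black box; that theorem's proof is by compactness (take the maximal cube $Q_{R'}(\vec b)\subseteq f(H)$, find a boundary touching point, and use an integral mean-value estimate on $f^{-1}$ to lower-bound $R'$), followed by a second integral mean-value argument for the Lipschitz bound. You instead re-derive coverage by an ODE continuation: solve $\dot{\tilde F}=J_\lambda^{-1}(\tilde F)(\vec\mu'-\vec\mu_\lambda)$ with $\tilde F(0)=F$, bootstrap the solution inside $H$ using the row bounds, and read off $\FMG(\tilde F(1))=\vec\mu'$. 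This is an equally valid and somewhat more constructive route; since $J_\lambda^{-1}$ is holomorphic hence locally Lipschitz on $U$, local existence and uniqueness for the ODE hold, and your a priori estimate keeps the trajectory strictly interior so it extends to $t=1$. One small simplification you could make: once coverage is proved and injectivity on $U$ is known, the Lipschitz bound does not need a ``parallel bootstrap''---convexity of $Q_R$ plus coverage already guarantees that the local inverse maps the straight segment between $\vec\mu'$ and $\vec\mu''$ into $H$, and the integral mean-value identity (as in the paper's Theorem \ref{thm.inverse.function}) then gives the Lipschitz estimate directly. That is what the paper does, and it avoids rerunning the ODE argument.
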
 

\begin{proof}
 
  By Proposition \ref{prop.one.to.one} $FM_{\lambda}$ is injective in
  the open neighborhood $U$ of the polydisc
  $H=H_{\frac{m}{2},\frac{\tau h}{4 \pi }}(F)$. In addition, for each
  $F' \in H$ the inverse Jacobian norm bounds derived in Proposition
  \ref{prop.uniform.jacobian.bounds.clustered.signal} apply.  Finally
  one can verify (using a similar argument as in the proof of
  Proposition \ref{prop.uniform.jacobian.bounds.clustered.signal} )
  that $J_\lambda(F')$ is non-degenerate for each $F' \in U$. We can
  therefore invoke Theorem \ref{thm.inverse.function} with $U,H$ and
  $f=\FMG$ and the bounds \eqref{uniform.bound.amp.non.cluster},
  \eqref{uniform.bound.nodes.non.cluster},
  \eqref{uniform.bound.amp.cluster},
  \eqref{uniform.bound.nodes.cluster}, and conclude that Proposition
  \ref{prop.lipshitz} holds with
  $\tilde{C}_1 = \tilde{C}_2 = \tilde{C}$ and
  $\tilde{C}_3 = \min\left(\frac{m}{2\tilde{C}},\frac{1}{4 \pi
      \tilde{C}}\right)$.
\end{proof}

\subsubsection{The global geometry of admissible decimation maps}\label{sec.global}

In this subsection we give a global description of the geometry of the
error set $E_{\epsilon,(\lambda)}(F)$ for any $\lambda \in \Lambda(\xv)$
and for $\epsilon \le R$ where $R = \tilde{C}_3 (\Omega \tau h)^{2p-1}$, and $\tilde{C}_3$ is as
specified in Proposition \ref{prop.lipshitz}.

\smallskip

For each $\lambda\in \Lambda(\xv)$ let $\vec{\mu}_{\lambda}=\FMG(F)$, and put
\begin{equation}\label{eq.def.AG}
  A_{\epsilon,\lambda}(F) = \FMG^{-1}\left(Q_\epsilon(\vec{\mu}_{\lambda})\right) \bigcap \P_d,			
\end{equation}
where $\FMG^{-1}: V_\lambda \rightarrow U$ is the local inverse of $\FMG$ on $U$.

Observe that $A_{\epsilon,\lambda}(F) \subset \EG$. The analysis
of this subsection will reveal that globally $\EG$ is made from certain periodic repetitions 
of the set $A_{\epsilon,\lambda}(F)$ and its permutations.

Consider the following example.
\begin{example}
  Let $F(x)=\delta(x-\frac{1}{10})+\delta(x-\frac{2}{10})$ and let $\lambda = \frac{10}{3}$.
  Applying $\FMG$ on $F$ we get that
  $$
  \FMG(F)=(2,e^{\frac{2\pi}{3}i}+e^{-\frac{2\pi}{3}i},e^{-\frac{2\pi}{3}i}+
  e^{\frac{2\pi}{3}i},2) = (2,-1,-1,2).
  $$ 
  If we set $F=(\av,\xv)$ with $\av=(a_1,a_2)=(1,1)$ and
  $\xv=(x_1,x_2)=(\frac{1}{10},\frac{2}{10})$ then clearly the signal
  $F'=(\av,\xv')$, $\xv'=(x_2,x_1)=(\frac{2}{10},\frac{1}{10})$, that
  is attained by permuting the nodes of the signal $F$, satisfies that
  $\FMG(F)=\FMG(F')$. Observe that $F' \notin \P_2$ since its nodes
  are not in ascending order (a condition that was posed on $\P_d$ to
  avoid redundant solutions).  However, the signal $F''=(a,\xv'')$ with
  $\xv''=\xv'-\frac{1}{\lambda}(1,0)=\xv'-\frac{3}{10}(1,0)=(-\frac{1}{10},\frac{1}{10})$,
  is in $\P_2$ and it holds that $\FMG(F)=\FMG(F'')$.
  
  One can verify that the set of signals $G \in \P_2$, which satisfies $\FMG(G)=\FMG(F)$ 
  is given by
  \begin{align*}
    \left\{ G=(\av,\vec{y})\in \P_2 : \vec{y}=\xv+\frac{1}{\lambda}(n_1,n_2),\ \ n_1,n_2\in \Z\right\} & \bigcup \\
    \left\{ G=(\av,\vec{y})\in \P_2 : \vec{y}=\xv'+\frac{1}{\lambda}(n_1,n_2),\ \ n_1,n_2\in \Z\right\} &.
  \end{align*}
\end{example}

In order to formalize the statement regarding the global structure of $\EG$, which
is essentially a generalization of the example above, we require 
some notation regarding permutation and shift operations.

We denote the set of permutations of $d$ elements by 
$$\Pi = \Pi_d \subset \left\{ \pi : \{1,\ldots,d\} \rightarrow \{1,\ldots,d\} \right\}.$$

For a vector $\xv=(x_1,\ldots,x_d) \in \C^d$ and a permutation $\pi $, we denote by $\xv^{\pi}$ the vector attained by 
permuting the coordinates of $\xv$ according to $\pi$
$$\xv^{\pi}=(x_{\pi(1)},\ldots,x_{\pi(d)}).$$
For a set $A \subseteq \P_d$ and a permutation $\pi\in\Pi_d$, we denote by $A^{\pi}$ the set attained 
from $A$ by permuting the nodes and amplitudes of each signal in $A$ according to $\pi$
$$A^{\pi}=\left\{ (\av^{\pi},\xv^{\pi}) : (\av,\xv)\in A \right\}.$$

The following proposition gives a description of the global geometry
of $\EG$. Its proof is presented in Appendix \ref{appendix.global.geo}.
\begin{proposition}\label{prop.global.geo}
  For each $\lambda \in \Lambda(\xv)$ and $\epsilon \le R$
  $$\EG = \left(\bigcup_{\pi \in \Pi_d} \bigcup_{\lv \in \Z^d} A_{\epsilon,\lambda}^{\pi}(F)+\frac{1}{\lambda}\lv
  \right)\bigcap \P_d.$$
\end{proposition}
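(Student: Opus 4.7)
The plan is to establish the two inclusions of Proposition \ref{prop.global.geo} separately. The inclusion $\supseteq$ is routine and follows from invariance properties of $\FMG$, while the inclusion $\subseteq$ is the substantive part and will rest on classical Prony uniqueness applied in the local neighborhood $U$ provided by Proposition \ref{prop.one.to.one}.

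For $\supseteq$, I would first record two symmetries of $\FMG$ on $\barP$: for any $(\av,\xv)$, any permutation $\pi\in\Pi_d$, and any $\lv\in\Z^d$, one has $\FMG(\av^\pi,\xv^\pi)=\FMG(\av,\xv)$ (by symmetry of the defining sum) and $\FMG(\av,\xv+\lv/\lambda)=\FMG(\av,\xv)$ (since $e^{2\pi i k\ell_j}=1$ for integer $\ell_j$). Hence any element of $A_{\epsilon,\lambda}^\pi(F)+\lv/\lambda$ has the same $\FMG$-image as a corresponding element of $A_{\epsilon,\lambda}(F)$, and therefore lies in $\EG$ whenever it lies in $\P_d$, by the defining bound of $A_{\epsilon,\lambda}$.

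For $\subseteq$, I would take $F'=(\av',\xv')\in\EG$ and set $\vec{\mu}':=\FMG(F')$. Then $\vec{\mu}'\in Q_\epsilon(\vec{\mu}_\lambda)\subseteq Q_R(\vec{\mu}_\lambda)\subseteq V_\lambda=\FMG(U)$, so there is a unique preimage $\tilde F=(\tilde\av,\tilde\xv)=\FMG^{-1}(\vec{\mu}')\in U$. I would then invoke the classical uniqueness of Prony's problem: if $\mu_k=\sum_{j=1}^d a_j z_j^k$ for $k=0,\dots,2d-1$ with distinct $z_j$ and nonzero $a_j$, then the multiset $\{(a_j,z_j)\}$ is uniquely determined by $(\mu_0,\dots,\mu_{2d-1})$. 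Since $\tilde F\in U$ is close to $F$, its mapped nodes $\tilde z_j=e^{2\pi i\lambda\tilde x_j}$ are distinct and its amplitudes nonzero; a short side argument shows that $F'$ inherits this property (otherwise the Prony order of $\vec{\mu}'$ would drop below $d$, contradicting the order-$d$ representation via $\tilde F$). Uniqueness then produces a permutation $\pi\in\Pi_d$ with $(\tilde a_j,\tilde z_j)=(a'_{\pi(j)},z'_{\pi(j)})$, and equality of unimodular $\tilde z_j=z'_{\pi(j)}$ forces $\tilde x_j-x'_{\pi(j)}\in\lambda^{-1}\Z$. Unwinding definitions will exhibit $F'$ as an element of $A_{\epsilon,\lambda}^\pi(F)+\lv/\lambda$ for a suitable $\lv\in\Z^d$.

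The main obstacle is certifying that $\tilde F$ actually belongs to $A_{\epsilon,\lambda}(F)$, i.e., to $\P_d$ rather than only to $\barP$ into which $\FMG^{-1}$ a priori maps. Reality of the coordinates $\tilde x_j$ will be forced by $|\tilde z_j|=|z'_{\pi(j)}|=1$ together with $\tilde F\in U$: the only real representative of a unimodular $\tilde z_j$ lying within distance $\tau h/(2\pi)$ of $x_j$ is $\tilde x_j$ itself. Strict ordering $\tilde x_1<\cdots<\tilde x_d$ then follows because the polydisc radius $\tau h/(2\pi)$ is smaller than half the minimal separation $\tau h$ between consecutive cluster nodes (and far below the non-cluster gaps), so the ordering of $F$ is preserved under the allowed perturbation. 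A secondary item is to state or cite Prony's uniqueness theorem explicitly, since it does the essential lifting; it is standard and follows from the nonsingularity of the associated Vandermonde system.
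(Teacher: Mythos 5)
Your proof is correct and follows the same strategy as the paper's: $\supseteq$ from the permutation/shift invariance of $\FMG$, and $\subseteq$ by pulling $\vec{\mu}'=\FMG(F')$ back through the local inverse, applying Prony uniqueness (Proposition \ref{prop.unique.solution.prony}) to match $g_\lambda(F')$ with a permutation of $g_\lambda(\tilde F)$, and using unimodularity of the mapped nodes to reduce the node ambiguity to a shift in $\lambda^{-1}\Z^d$. You are in fact slightly more careful than the paper at one step: you explicitly justify that $\tilde F = \FMG^{-1}(\vec{\mu}')$ lies in $\P_d$ (real nodes via $|\tilde z_j|=1$, strict ordering via the polydisc radius $\tau h/(2\pi)$ being well below the minimal separation $\tau h$), whereas the paper simply asserts the existence of $F''\in A_{\epsilon,\lambda}(F)$ with $\FMG(F'')=\vec{\mu}'$ and later invokes reality of $x''_j$ without spelling this out.
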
 
	
\subsection{Proof of the upper bound}\label{sec:proof-upper-bound}

  Fix $F=(\av,\xv) \in \P_d,\ \av=(a_1,\ldots,a_d), \
  \xv=(x_1,\ldots,x_d)\subset \left[-\frac{1}{2},\frac{1}{2}\right]$,
  such that $\xv$ forms a $(p,h,1,\tau,\eta)$-clustered configuration
  with $\xv^c=\{x_\kappa, x_{\kappa+1},\allowbreak\ldots,\allowbreak x_{\kappa+p-1}\}$,
  and $\|\av\| \geq m > 0$.

    Consider the set of the admissible blowup factors $\Lambda(\xv)$ (see
  Definition \ref{def.good.blowup.set}).  By the analysis of Section
  \ref{sec.error.sets}, under the assumption that
  $\Omega h \le \frac{1}{20d}$, the following assertions hold:

  \begin{enumerate}
  \item By Proposition \ref{prop.one.to.one} there exists a neighborhood $U$ of $F$
    such that for each $\lambda \in \Lambda(\xv)$, $\FMG$ is one-to-one on $U$.
  \item By Proposition \ref{prop.lipshitz} there exists a constant
    $\tilde{C}_3=\tilde{C}_3(m,d,\allowbreak p)$ such for each
    $\lambda \in \Lambda(\xv)$, $V_{\lambda} = \FMG(U)$ contains a
    cube $Q_R(\vec{\mu}_{\lambda})$, where
    $\vec{\mu}_{\lambda} = \FMG(F)$ and $R = \tilde{C}_3 (\Omega \tau h)^{2p-1}$.
  \end{enumerate}			

  For each $\lambda \in \Lambda(\xv)$ consider the local inverse $\FMG^{-1}: V_{\lambda} \rightarrow U$
  and let (as above)
  $$ A_{R,\lambda}(F) = \FMG^{-1}(Q_{R}(\vec{\mu}_{\lambda})) \bigcap \P_d.$$
  
  The following intermediate claim is proved in Appendix
  \ref{appendix.upper.bound.tech}.

  \begin{proposition}\label{prop.elimiate.period}
    There exist positive constants $K_9$ and $K_{10}\le \frac{1}{20d}$ depending only on $d$, 
    such that for $\frac{K_9}{\eta} \le \Omega \le \frac{K_{10}}{h}$ the following holds.
    There exists $\lambda \in \Lambda(\xv)$ such that for each pair $(\pi,\lv) \in \Pi_d \times \left(\Z^d\setminus\{\vec{0}\}\right)$,
    there exists $\lambda_{\pi,\lv} \in \Lambda(\xv)$ for which 
    \begin{equation}\label{eq.what.we.intend.to.show}
      \left(A_{R,\lambda}^{\pi}(F)+\frac{1}{\lambda}\lv \right) \bigcap 
      E_{R,(\lambda_{\pi,\lv} )}(F) = \emptyset.
    \end{equation}
  \end{proposition}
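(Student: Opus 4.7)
My plan is to exhibit an explicit $\lambda\in\Lambda(\xv)$ and, for each $(\pi,\lv)$, a companion $\lambda_{\pi,\lv}\in\Lambda(\xv)$ from the same family, and then verify \eqref{eq.what.we.intend.to.show} by a direct moment computation. The underlying mechanism is that a node shift by $\frac{1}{\lambda}\lv$ is invisible to $FM_\lambda$ but picked up as a non-trivial phase by any $FM_{\lambda'}$ whose ratio $\lambda'/\lambda$ is not an integer. Concretely, I would first parametrize an arbitrary $G \in A_{R,\lambda}^{\pi}(F)+\frac{1}{\lambda}\lv$: re-indexing via $j\mapsto \pi^{-1}(j)$ and applying the Lipschitz bounds of Proposition~\ref{prop.lipshitz}, the $j$-th amplitude of $G$ equals $a_j+\alpha_j$ with $|\alpha_j|=O(R)$ and the $j$-th node equals $x_j+\xi_j+\delta_j$ with $\xi_j := \ell_{\pi^{-1}(j)}/\lambda$ a ``macroscopic'' shift and $|\delta_j|=O(R/\Omega)$ a small perturbation inherited from $A_{R,\lambda}(F)$.

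For any candidate $\lambda'\in\Lambda(\xv)$ the difference of moments then expands as
\begin{equation*}
  FM_{\lambda'}(G)_k - FM_{\lambda'}(F)_k \;=\; \sum_{j=1}^d a_j e^{2\pi i \lambda' k x_j}\bigl(e^{2\pi i \lambda' k \xi_j}-1\bigr) \;+\; \mathcal{O}(R),\qquad k=0,1,\dots,2d-1,
\end{equation*}
where the error term is uniform over $G$. Since $\lv\neq \vec 0$, the vector $\xi$ is nonzero with at least one $|\xi_j|\geq 1/\lambda$, so the leading trigonometric-polynomial sum is non-trivial in $\lambda'$. Condition \eqref{eq.what.we.intend.to.show} thus reduces to finding $\lambda_{\pi,\lv}\in\Lambda(\xv)$ for which this leading sum exceeds, say, $3R$ in maximum norm over $k$; because $R=\tilde{C}_3(\Omega\tau h)^{2p-1}$ is small, this dominates the perturbation term and yields the claimed emptiness.

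To produce such $\lambda_{\pi,\lv}$ I would invoke Proposition~\ref{prop.good.blowup}, which furnishes sub-intervals of $\Lambda(\xv)$ of length $\asymp 1/\eta$ inside $[\Omega/(2(2d-1)),\Omega/(2d-1)]$. Viewing the leading sum above as an analytic function of $\lambda'$ of bounded complexity, its modulus can be bounded from below on a large portion of any such sub-interval by combining (i) the Vandermonde-type lower bounds encoded in Proposition~\ref{prop.uniform.jacobian.bounds}, which control how well signals with well-separated nodes can cancel, and (ii) an averaging/pigeonhole argument over the free parameter $\lambda'$. The main obstacle I anticipate is the case of large $|\lv|$, where $\xv+\xi$ falls far outside $[-\tfrac12,\tfrac12]$ and the clustered-separation hypotheses used to derive Proposition~\ref{prop.uniform.jacobian.bounds.clustered.signal} no longer apply directly to $F_\xi := (\av,\xv+\xi)$. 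For this regime I would use a separate pigeonholing inside the interval of admissible $\lambda'$'s, exploiting that a trigonometric polynomial of bounded degree in $\lambda'$ cannot be uniformly small unless its coefficients nearly vanish, and translate this back into a quantitative lower bound using the separation of $\xv$ alone. The thresholds $\Omega\geq K_9/\eta$ and $\Omega h\leq K_{10}$ enter precisely here, to guarantee both that $\Lambda(\xv)$ contains an interval long enough to carry out the pigeonholing, and that the $\mathcal{O}(R)$ perturbation term does not swallow the leading contribution.
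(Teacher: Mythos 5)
Your plan attacks the problem in moment/data space: parametrize $G\in A_{R,\lambda}^{\pi}(F)+\frac{1}{\lambda}\lv$ and try to show that $\|FM_{\lambda'}(G)-FM_{\lambda'}(F)\|>R$ for a well-chosen $\lambda'$, by bounding the leading trigonometric sum $\sum_j a_j e^{2\pi i\lambda'kx_j}\bigl(e^{2\pi i\lambda'k\xi_j}-1\bigr)$ away from zero. The paper's actual argument is entirely different and much more economical: it works in \emph{node space}. Using the global description (Proposition~\ref{prop.global.geo}) that $E_{R,(\lambda')}(F)$ is a union of $\frac{1}{\lambda'}\Z^d$-translates of permutations of $A_{R,\lambda'}(F)$, together with the ``radius'' estimate \eqref{eq.A.R.radious} that every node of $A_{R,\lambda'}(F)$ lies within $h$ of the corresponding $x_j$, the task reduces to showing that the shifted node $x_i+n/\lambda^*$ (with $n=\lv_i\ne 0$) lies at distance $>2h$ from every point of the lattice family $\bigl\{x_\ell+k/\lambda_{\pi,\lv}:k\in\Z,\ 1\le\ell\le d\bigr\}$. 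This is a purely Diophantine problem on the real line, handled by the measure Lemma~\ref{lem.real.intervals.1}: one first chooses a single $\lambda^*\in\Lambda(\xv)$ with good separation of $x_i-x_j\ \mathrm{mod}\ \frac{1}{\lambda^*}$ for mixed pairs, then selects $\lambda_{\pi,\lv}$ by a case analysis on the size and location of $n/\lambda^*$. No lower bounds on moment differences are ever needed.

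The chief gap in your proposal is exactly where it would have to do the most work. First, a \emph{quantitative} lower bound of order $R$ on the leading sum, uniformly over $G$ and over $k\le 2d-1$, is not established; it is asserted via appeals to ``Vandermonde-type lower bounds'' and ``averaging/pigeonhole,'' but those references control how well a signal is determined by its moments (upper Lipschitz bounds on $FM_{\lambda'}^{-1}$), not how far apart two signals with well-separated nodes must be in moment space — in fact a lower bound of the needed form is essentially the hard direction of the entire problem. Second, and more concretely, your treatment of the large-$|\lv|$ regime is based on the premise that $\lambda'\mapsto\sum_j a_j e^{2\pi i\lambda' k x_j}\bigl(e^{2\pi i\lambda' k\xi_j}-1\bigr)$ is a ``trigonometric polynomial of bounded degree in $\lambda'$.'' This is false: the frequencies $k\xi_j=k\lv_{\pi^{-1}(j)}/\lambda$ grow without bound as $\|\lv\|\to\infty$, so the bandwidth of this exponential sum in $\lambda'$ is \emph{not} controlled by $d$. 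That is precisely why the paper's node-space reformulation, in which the shift enters additively as $n/\lambda^*$ and can be analyzed via the elementary measure estimates of Lemma~\ref{lem.real.intervals.1} and Proposition~\ref{prop.small.c}, is the route to a proof; the moment-space reformulation converts a one-dimensional Diophantine fact into an exponential-sum lower bound of unbounded complexity.
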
 

With a bit of additional work, we obtain the main geometric result
regarding the error set $\EO$.

\begin{proposition}\label{prop.good.lambda}
    Let $\Omega$ as in Proposition \ref{prop.elimiate.period}, then there exists $\lambda \in \Lambda(\xv)$ such that
  \begin{equation}\label{eq.goal}
    E_{R,\Omega}(F) \subset A_{R,\lambda}(F).
  \end{equation}
\end{proposition}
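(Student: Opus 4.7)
The plan is to take the specific $\lambda$ produced by Proposition \ref{prop.elimiate.period} and show that every $F' \in E_{R,\Omega}(F)$ is automatically forced into $A_{R,\lambda}(F)$. The two ingredients I will combine are the decomposition of Proposition \ref{prop.global.geo}, which writes each $E_{R,(\lambda')}(F)$ as a union of shifts and permutations of the basic set $A_{R,\lambda'}(F)$, and the inclusion \eqref{eq:errorset.via.decimated.errorsets}, which places $F'$ inside \emph{every} decimated error set $E_{R,(\lambda')}(F)$ for $\lambda' \in (0, \Omega/(2d-1)]$.

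First, I would fix an arbitrary $F' \in E_{R,\Omega}(F)$. By \eqref{eq:errorset.via.decimated.errorsets} applied at the chosen $\lambda$, we have $F' \in E_{R,(\lambda)}(F)$, so Proposition \ref{prop.global.geo} yields $\pi \in \Pi_d$ and $\lv \in \Z^d$ with
\[
F' \in \Bigl(A_{R,\lambda}^{\pi}(F) + \tfrac{1}{\lambda}\lv\Bigr) \cap \P_d.
\]
The goal then reduces to showing $\pi = \mathrm{id}$ and $\lv = \vec{0}$, since in that case $F' \in A_{R,\lambda}(F)$ by definition.

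The elimination of nonzero shifts is where Proposition \ref{prop.elimiate.period} does the heavy lifting. If $\lv \ne \vec{0}$, Proposition \ref{prop.elimiate.period} supplies an auxiliary $\lambda_{\pi,\lv} \in \Lambda(\xv)$ for which $(A_{R,\lambda}^{\pi}(F) + \frac{1}{\lambda}\lv) \cap E_{R,(\lambda_{\pi,\lv})}(F) = \emptyset$. Since $\lambda_{\pi,\lv} \le \Omega/(2d-1)$, \eqref{eq:errorset.via.decimated.errorsets} simultaneously forces $F' \in E_{R,(\lambda_{\pi,\lv})}(F)$, producing an immediate contradiction. Thus $\lv = \vec{0}$ and $F' \in A_{R,\lambda}^{\pi}(F) \cap \P_d$.

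It remains to rule out nontrivial permutations, and here I would use the smallness of $A_{R,\lambda}(F)$. By construction $A_{R,\lambda}(F) = FM_{\lambda}^{-1}(Q_R(\vec{\mu}_\lambda)) \cap \P_d \subset U = H^{\mathrm{o}}_{m, \tau h/(2\pi)}(F)$, so any preimage $(\av'', \xv'')$ with $F' = (\av''^{\pi}, \xv''^{\pi})$ satisfies $|\xv''_j - x_j| < \tau h / (2\pi)$ for every $j$. The minimal pairwise separation of $\xv$ is at least $\tau h$ inside the cluster and at least $\eta T$ elsewhere, both exceeding twice this perturbation, so $\xv''$ inherits the strict ordering of $\xv$. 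The constraint $F' \in \P_d$ then forces $\xv''_{\pi(1)} < \cdots < \xv''_{\pi(d)}$, which is possible only for $\pi = \mathrm{id}$. Combined with $\lv = \vec{0}$ this gives $F' \in A_{R,\lambda}(F)$, establishing \eqref{eq.goal}. The one real obstacle is already absorbed into Proposition \ref{prop.elimiate.period}; the remainder is a direct matching of the two descriptions of the error set.
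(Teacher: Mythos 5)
Your proof is correct and follows essentially the same approach as the paper: take the $\lambda$ furnished by Proposition \ref{prop.elimiate.period}, use the decomposition of Proposition \ref{prop.global.geo} together with the inclusion \eqref{eq:errorset.via.decimated.errorsets} to eliminate all nonzero shifts $\lv$, and then discard nontrivial permutations via the ordering constraint in $\P_d$. The paper performs the same argument via set-algebraic manipulations where you argue element-wise, and your explicit justification that $A_{R,\lambda}^{\pi}(F) \cap \P_d = \emptyset$ for $\pi \ne \mathrm{id}$ (using $A_{R,\lambda}(F) \subset U$ and the separation of the nodes) fleshes out a step the paper treats tersely in its final sentence.
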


\begin{proof}
  Using Proposition \ref{prop.elimiate.period} fix
  $\lambda^* \in \Lambda(\xv)$ which satisfies
  \eqref{eq.what.we.intend.to.show}.  We will prove that $\lambda^{*}$
  satisfies \eqref{eq.goal}.

  For each $\lambda \in \Lambda(\xv)$, we have the following result due to Proposition \ref{prop.global.geo}:
  \begin{equation}\label{eq.upper.bound.repeat.res.global}
    \EGR \subset \bigcup_{\pi \in \Pi_d} \bigcup_{\lv \in \Z^d} \left(A_{R,\lambda}^{\pi}(F)+\frac{1}{\lambda}\lv\right).
  \end{equation}
  
  \smallskip
  
  Putting $\epsilon=R$ in \eqref{eq:errorset.via.decimated.errorsets}
  we obtain
  \begin{equation}\label{eq.uper.bound.intersection}
    \EOR \subseteq \bigcap_{\lambda \in (0, \frac{\Omega}{2d-1}]} 
    \EGR.
  \end{equation}
  
  We then obtain \eqref{eq.goal} from
  \eqref{eq.what.we.intend.to.show}, \eqref{eq.upper.bound.repeat.res.global} and \eqref{eq.uper.bound.intersection}
  by algebra of sets calculation as follows:
  
  \noindent First by \eqref{eq.uper.bound.intersection}
  \begin{equation}\label{eq.set.alg.1}
    \EOR \subseteq \bigcap_{\lambda \in (0, \frac{\Omega}{2d-1}]} 
    \EGR = \EGRS \cap \left( \bigcap_{\lambda \in (0, \frac{\Omega}{2d-1}]} \EGR\right).
  \end{equation} 		
  
  By \eqref{eq.upper.bound.repeat.res.global} 
  \begin{equation}\label{eq.set.alg.2}
    \EGRS \subset
    \bigcup_{\pi \in \Pi_d, \lv \in \Z^d} \left(A_{R,\gs}^{\pi}(F)+\frac{1}{\gs}\lv\right).
  \end{equation}
  
  Then by \eqref{eq.set.alg.1} and \eqref{eq.set.alg.2}
  \begin{equation}\label{eq.set.alg.3}
    \EOR \subseteq \left(\bigcup_{\pi \in \Pi_d, \lv \in \Z^d} A_{R,\gs}^{\pi}(F)+\frac{1}{\gs}\lv
    \right) \cap \left( \bigcap_{\lambda \in (0, \frac{\Omega}{2d-1}]} \EGR\right).
  \end{equation}	
  
  For each pair
  $(\pi,\lv) \in \Pi_d \times \left(\Z^d\setminus\{\vec{0}\}\right)$,
  let $\lambda_{\pi,\lv}\in\Lambda(\xv)$ be the value asserted by
  Proposition \ref{prop.elimiate.period}, i.e. satisfying
  \eqref{eq.what.we.intend.to.show} for $\lambda=\gs$. By this and by
  \eqref{eq.set.alg.3} we have
  \begin{align}\label{eq.upper.bound.local.error.set}
    \begin{split}
      \EOR &\subset \left(\bigcup_{\pi \in
          \Pi_d}A_{R,\gs}^{\pi}(F)\right)\bigcup
      \left(\bigcup_{(\pi,\lv) \in \Pi_d \times
          (\Z^d\setminus\{\vec{0}\})} \left\{ \left(
            A_{R,\gs}^{\pi}(F)+\frac{1}{\gs}\lv \right) \bigcap
          E_{R,(\lambda_{\pi,\lv})}(F)\right\}\right) \\ &
      =\bigcup_{\pi \in \Pi_d}A_{R,\gs}^{\pi}(F).
    \end{split}
  \end{align} 
  By definition $\EOR \subset \P_d$ where we assume a canonical ascending order of the nodes. Then, we 
  conclude from \eqref{eq.upper.bound.local.error.set} that $\EOR \subset A_{R,\gs}(F)$ 
  which proves \eqref{eq.goal} for $\lambda = \lambda^*$.
\end{proof}

We have everything in place to estimate the diameter of the set $\EO$ and its
projections.

\begin{proposition}\label{prop.upper.bound.normelized}
  Let
  $F=(\av,\xv) \in \P_d$, $\xv\subset \left[-\frac{1}{2},\frac{1}{2}\right]$,
  such that $\xv$ forms a $(p,h,1,\tau,\eta)$-clustered configuration and $\|\av\| \geq m > 0$. 
  Then there exist positive
  constants $C_1,\ldots,C_5$, depending only on
  $d,p,m,$ such that for each
  $\frac{C_4}{\eta} \le \Omega \le \frac{C_5}{h}$ and
  $\epsilon \le C_3(\Omega \tau h)^{2p-1}$, it holds that:
  \begin{align*}
    diam(E^{\xv,j}_{\epsilon,\Omega}(F)) &\le \begin{cases} C_1  {1\over\Omega}(\Omega \tau  h)^{-2p+2} \epsilon, & x_j
    \in \xv^c, \\
      C_1 {1\over\Omega}\epsilon, & x_j \in \xv\setminus\xv^c,
    \end{cases}
    \\
    diam(E^{\av,j}_{\epsilon,\Omega}(F)) & \le
                                     \begin{cases}
                                       C_2 (\Omega \tau h)^{-2p+1}\epsilon, & x_j \in \xv^c, \\
                                       C_2  \epsilon, & x_j \in \xv\setminus\xv^c.
                                     \end{cases}
  \end{align*}
\end{proposition}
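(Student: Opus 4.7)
The strategy is to assemble the preceding machinery: Proposition \ref{prop.good.lambda} produces a single decimation $\lambda^{*}\in\Lambda(\xv)$ whose local error set already captures the full spectral error set, and Proposition \ref{prop.lipshitz} gives component-wise Lipschitz bounds on the local inverse $FM_{\lambda^{*}}^{-1}$ with exactly the exponents of $\Omega\tau h$ claimed. The work is to fit these two pieces together and to check that the admissibility conditions on $\Omega$ and $\epsilon$ can be collapsed into the hypothesis $\frac{C_{4}}{\eta}\le\Omega\le\frac{C_{5}}{h}$ and $\epsilon\le C_{3}(\Omega\tau h)^{2p-1}$.

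\textbf{Step 1: choose admissible decimation.} Let $K_{9},K_{10}$ be the constants of Proposition \ref{prop.elimiate.period} and set $C_{4}:=K_{9}$, $C_{5}:=K_{10}$. For $\Omega$ in the stated range we have $\Omega h\le\tfrac{1}{20d}$, so Proposition \ref{prop.good.lambda} applies and yields a $\lambda^{*}\in\Lambda(\xv)$ with
\begin{equation*}
E_{R,\Omega}(F)\subset A_{R,\lambda^{*}}(F),\qquad R=\tilde{C}_{3}(\Omega\tau h)^{2p-1},
\end{equation*}
where $\tilde{C}_{3}=\tilde{C}_{3}(m,d,p)$ comes from Proposition \ref{prop.lipshitz}. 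I set $C_{3}:=\tilde{C}_{3}$, so that the hypothesis $\epsilon\le C_{3}(\Omega\tau h)^{2p-1}$ ensures $\epsilon\le R$.

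\textbf{Step 2: localize the $\Omega$-error set inside $A_{\epsilon,\lambda^{*}}(F)$.} Since $\epsilon\le R$, the monotonicity $E_{\epsilon,\Omega}(F)\subseteq E_{R,\Omega}(F)$ places every $F'\in E_{\epsilon,\Omega}(F)$ inside $A_{R,\lambda^{*}}(F)\subset U$, so $FM_{\lambda^{*}}$ is injective at $F'$. Moreover $\lambda^{*}\in\Lambda(\xv)\subset(0,\Omega/(2d-1)]$ implies that the $2d$ sample frequencies $-\lambda^{*}k$ ($k=0,\dots,2d-1$) lie in $[-\Omega,\Omega]$; hence $F'\in E_{\epsilon,\Omega}(F)$ forces $FM_{\lambda^{*}}(F')\in Q_{\epsilon}(\vec{\mu}_{\lambda^{*}})$ with $\vec{\mu}_{\lambda^{*}}=FM_{\lambda^{*}}(F)$. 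Combining these two facts,
\begin{equation*}
E_{\epsilon,\Omega}(F)\ \subset\ FM_{\lambda^{*}}^{-1}\!\bigl(Q_{\epsilon}(\vec{\mu}_{\lambda^{*}})\bigr)\cap\P_{d}\ =\ A_{\epsilon,\lambda^{*}}(F).
\end{equation*}

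\textbf{Step 3: Lipschitz estimates on each coordinate.} For any $F',F''\in A_{\epsilon,\lambda^{*}}(F)$ one has $\|FM_{\lambda^{*}}(F')-FM_{\lambda^{*}}(F'')\|\le 2\epsilon$, and since $Q_{\epsilon}(\vec{\mu}_{\lambda^{*}})\subset Q_{R}(\vec{\mu}_{\lambda^{*}})$ the Lipschitz bounds of Proposition \ref{prop.lipshitz} apply, giving
\begin{align*}
|P_{\xv,j}(F')-P_{\xv,j}(F'')| &\le 2\tilde{C}_{1}\frac{\epsilon}{\Omega}\cdot\begin{cases}1,&x_{j}\in\xv\setminus\xv^{c},\\ (\Omega\tau h)^{-2p+2},&x_{j}\in\xv^{c},\end{cases}\\
|P_{\av,j}(F')-P_{\av,j}(F'')| &\le 2\tilde{C}_{2}\epsilon\cdot\begin{cases}1,&x_{j}\in\xv\setminus\xv^{c},\\ (\Omega\tau h)^{-2p+1},&x_{j}\in\xv^{c}.\end{cases}
\end{align*}
Taking the supremum over $F',F''\in A_{\epsilon,\lambda^{*}}(F)$ and then using the inclusion from Step~2 yields the claimed bounds on $\mathrm{diam}(E^{\xv,j}_{\epsilon,\Omega}(F))$ and $\mathrm{diam}(E^{\av,j}_{\epsilon,\Omega}(F))$ with $C_{1}:=2\tilde{C}_{1}$, $C_{2}:=2\tilde{C}_{2}$.

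\textbf{Main obstacle.} The conceptual difficulty has been absorbed into Proposition \ref{prop.good.lambda} (singling out $\lambda^{*}$ so that spurious shifted/permuted branches of $FM_{\lambda^{*}}^{-1}$ are ruled out by looking at \emph{other} admissible decimations) and into Proposition \ref{prop.lipshitz} (obtaining the sharp coordinate-wise scaling in $\Omega\tau h$). The remaining care is bookkeeping: verifying that $\lambda^{*}\le\Omega/(2d-1)$ so that the $FM_{\lambda^{*}}$ samples are legitimate band-limited data, that the hypothesis $\Omega h\le C_{5}/\text{const}$ implies the $\Omega h\le 1/(20d)$ needed in Section~\ref{sec.error.sets}, and that $\epsilon\le R$ so that $Q_{\epsilon}\subset Q_{R}$ where the Lipschitz estimate is valid. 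These are routine once $C_{3},C_{4},C_{5}$ are tuned as above.
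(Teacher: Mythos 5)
Your proposal is correct and follows essentially the same route as the paper: invoke Proposition \ref{prop.good.lambda} (for $\Omega$ in the admissible range fixed by the constants of Proposition \ref{prop.elimiate.period}) to place the full error set inside a single localized set $A_{R,\lambda^*}(F)$, then use the coordinate-wise Lipschitz bounds of Proposition \ref{prop.lipshitz} to control its diameter, taking $C_3=\tilde C_3$, $C_4=K_9$, $C_5=K_{10}$, $C_1=2\tilde C_1$, $C_2=2\tilde C_2$. The only cosmetic difference is bookkeeping: the paper estimates $|P_{\xv,j}(F')-P_{\xv,j}(F)|$ against the center $F$ and then doubles by considering a second point $F''$, whereas you bound $\|FM_{\lambda^*}(F')-FM_{\lambda^*}(F'')\|\le 2\epsilon$ directly; both give the same factor of $2$. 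Your Step~2 also makes explicit the small observation — left implicit in the paper — that the containment $\EO\subset A_{R,\lambda^*}(F)\subset U$ is what licenses writing $F'=FM_{\lambda^*}^{-1}(FM_{\lambda^*}(F'))$, and that $\lambda^*\le\Omega/(2d-1)$ guarantees $\|FM_{\lambda^*}(F')-FM_{\lambda^*}(F)\|\le\epsilon$ because the decimated sample points fall within $[-\Omega,\Omega]$.
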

\begin{proof}
  Let $\Omega$ be such that $\frac{K_9}{\eta} \le \Omega \le \frac{K_{10}}{h}$, 
  where $K_9=K_9(d),K_{10}=K_{10}(d)$ are the constants specified in Proposition \ref{prop.elimiate.period}.
  Let $\epsilon \le \tilde{C}_3(\Omega \tau h)^{2p-1} = R$, where $\tilde{C}_3 = \tilde{C}_3(m,d,\allowbreak p)$ is as
  specified in Proposition \ref{prop.lipshitz}. Let $F' \in \EO$ with $F'=(\av',\xv')$.  
  Using Proposition \ref{prop.good.lambda} fix
  $\lambda^* \in \Lambda(\xv)$ which satisfies \eqref{eq.goal}, and put $\vec{\mu}^{*}=FM_{\gs}(F)$. 
  Consequently
  $$F' \in A_{R,\gs}(F) = FM_{\gs}^{-1}(Q_{R}(\vec{\mu}^*)) \cap \P_d.$$
  
  Put $\vec{\mu}^{'}=FM_{\gs}(F')$. By
  Proposition \ref{prop.lipshitz} there exist constants
  $\tilde{C}_1 = \tilde{C}_{1}(m,d,p), \ \tilde{C}_2 =
  \tilde{C}_{2}(m,d,p)$ such that 
  \begin{align*}
      |\xv_j-\xv'_j|=\|P_{\xv,j} FM^{-1}_{\gs}(\vec{\mu}^*) - P_{\xv,j}
    FM^{-1}_{\gs}(\vec{\mu}')\| & \le
                                     \begin{cases}
                                     \tilde{C}_{1} \frac{1}{\Omega}
                                      (\Omega h)^{-2p+2}\epsilon,                                        & x_j \in \xv^c, \\
                                     \tilde{C}_1 \frac{1}{\Omega}\epsilon, & x_j \in\xv\setminus\xv^c.
                                     \end{cases}\\
    |\av_j-\av'_j|=\|P_{\av,j} FM^{-1}_{\gs}(\vec{\mu}^*) - P_{\av,j} FM^{-1}_{\gs}(\vec{\mu}')\| & \le
           \begin{cases}
           		\tilde{C}_{2}(\Omega h)^{-2p+1}\epsilon, & x_j \in \xv^c \\
                 \tilde{C}_{2} \epsilon, & x_j \in \xv\setminus\xv^c.                               
           \end{cases}
  \end{align*}

  Since $F'$ was an arbitrary signal in $\EO$, we repeat the above argument
  with $F''\in E_{\epsilon,\Omega}(F)$ and consequently prove
  Proposition \ref{prop.upper.bound.normelized} with
  $C_1 = 2\tilde{C}_1$, $C_2 = 2\tilde{C}_2$, $C_3 = \tilde{C}_3$, 
  $C_4=K_9$ and $C_5=K_{10}$.
\end{proof}

\bigskip

We are now in a position to prove Theorem
\ref{thm.accuracy.bounds.upper}, essentially by combining Proposition
\ref{prop.upper.bound.normelized} with Proposition
\ref{prop:normalization}.
\begin{proof}[Proof of Theorem \ref{thm.accuracy.bounds.upper}]
  Let $F=(\av,\xv) \in \P_d$ such that $\xv$ forms a
  $(p,h,T,\tau,\eta)$-clustered configuration
  and $\|\av\|\geq m > 0$.  Let
  $\frac{C_4}{ \eta T}\le \Omega \le \frac{C_5}{h}$ where
  $C_4=C_4(d,p,m),C_5=C_5(d,p,m)$ are the constants
  specified in Proposition \ref{prop.upper.bound.normelized}.

  \smallskip
  
  Put $\alpha = (x_1+x_d)/2$. The signal $SC_{T}(SH_{\alpha}(F))=(\av,\tilde{\xv})$,
  $\tilde{\xv}=(\tilde{x}_1,\allowbreak \ldots,\tilde{x}_d)$, 
  $\tilde{x}_1=\frac{x_1-\alpha}{T},\allowbreak \ldots,\tilde{x}_d=\frac{x_d-\alpha}{T}$,
  is normalized such that
  $\tilde{x}_1,\ldots,\tilde{x}_d \in [-\frac{1}{2},\frac{1}{2}]$.
  The node vector $\tilde{\xv}$ forms a $(p,\frac{h}{T},1,\tau,\eta)$-clustered configuration.
  Applying Proposition \ref{prop.upper.bound.normelized} for $\tilde{F}=SC_{T}(SH_{\alpha}(F))$, $\tilde{h}=\frac{h}{T}$,
  $\tilde{\Omega}=\Omega T \ge \frac{C_4}{\eta}$ and $\tilde{\Omega}
  \tilde{h} = \Omega h \le C_5$, we conclude that there exist
  constants $C_1,C_2,C_3$, depending only on $d,p,m$, such that for any $\epsilon \le C_3 (\Omega \tau h)^{2p-1}$
  \begin{align*}
      diam\big(E^{\xv,j}_{\epsilon,\Omega T}(SC_{T}(SH_{\alpha}(F)))\big) &\le
                                                             \begin{cases}
C_1
                                                       \frac{1}{\Omega T} (\Omega \tau h)^{-2p+2}  \epsilon, & x_j \in
                                                       \xv^c, \\
    C_1 {1\over{\Omega T}}\epsilon, & x_j \in \xv\setminus\xv^c,                                                   
                                                             \end{cases}\\
    diam\big(E^{\av,j}_{\epsilon,\Omega T}(SC_{T}(SH_{\alpha}(F)))\big) &\le
                                                             \begin{cases}
                                                               C_2 (\Omega \tau h)^{-2p+1}\epsilon, & x_j \in \xv^c,\\
                                                               C_2 \epsilon, & x_j\in\xv\setminus\xv^c.                                                              
                                                             \end{cases}
  \end{align*}
  
  Applying Proposition \ref{prop:normalization} we conclude the proof
  Theorem \ref{thm.accuracy.bounds.upper}.
\end{proof}

\section{Lower bounds}\label{sec:lower-bound}

In this section all the constants
$c_1,\dots,k_1,\dots,K_1,\dots$ are unrelated to those of the previous section.

The main technical result we need is the following.

\begin{proposition}\label{prop.perturbation.large}
Let $F=(\av,\xv)\in\P_d$, such that $\xv$ forms a
$(p,h,1,\tau,\eta)$-clustered configuration, with cluster nodes
$\xv^c=\left(x_1,\dots,x_p\right)$ (according to
Definition \ref{def.uniform.cluster}), and with $\av\in\R^d$
satisfying $m\leq \|\av\|\leq M$.

Then there exist constants $c_1,k_1,k_2$, depending only on
$\left(d,\tau,m,M\right)$, such that for all
$\epsilon<c_1 (\Omega h)^{2p-1}$ and $\Omega h \leq 2$, there exists a
signal $F_{\epsilon}\in\P_d$ satisfying, for some $j_{1},j_{2}\in\left\{1,\dots,p\right\}$,
\begin{align}
  \label{eq:feps.omega.px}
  \left| P_{\xv,j_1} \left(F_{\epsilon}\right) - P_{\xv,j_1} \left(F\right)\right| &\geq
  \frac{k_1}{\Omega} \left(\Omega h\right)^{-2p+2} \epsilon, \\
  \label{eq:feps.omega.ax}
\left| P_{\av,j_2} \left(F_{\epsilon}\right) - P_{\av,j_2} \left(F\right)\right| &\geq k_2\left(\Omega h\right)^{-2p+1} \epsilon, \\
  \label{eq:feps.fourier}
  \left|{\cal F}\left(F_{\epsilon}\right)(s) - {\cal F}(F)(s)\right| &\leq \epsilon,\qquad |s|\leq\Omega.
\end{align}

\end{proposition}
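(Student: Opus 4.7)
The plan is to adapt the worst-case construction of Algorithm \ref{alg.worst.case}: build $F_\epsilon$ by perturbing only the $(2p-1)$-th polynomial moment of the cluster part of $F$ by a carefully chosen amount $\delta$, solving the resulting Prony system to obtain new cluster parameters $(\av',\xv')$, and leaving the non-cluster part of $F$ untouched. The condition $\epsilon < c_1(\Omega h)^{2p-1}$ will translate into $\delta$ being small enough for the Prony map to be locally invertible in a branch close to $(\av^c,\xv^c)$.

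After a shift, one may assume that the cluster is centered at the origin, so $\xv^c = (x_1,\dots,x_p) \subset [-h/2, h/2]$. Define the moment map $M: \C^p\times\R^p \to \C^{2p}$ by $M(\av,\xv)_k = \sum_{j=1}^p a_j x_j^k$, $k=0,\dots,2p-1$, let $\vec{g}=M(\av^c,\xv^c)$, and set $\vec{g}' = \vec{g} + \delta\, e_{2p-1}$ with $\delta = c_0\, \Omega^{-(2p-1)}\epsilon$ (a small constant $c_0$ to be fixed). Inverting $M$ via Proposition \ref{prop.prony.method} yields $(\av',\xv')$, and we build $F_\epsilon$ by replacing the cluster part of $F$ with this new $p$-sparse configuration.

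The lower bounds \eqref{eq:feps.omega.px}--\eqref{eq:feps.omega.ax} will come from the quantitative analysis of the inverse of $M$. After the rescaling $y_j = x_j/h$, the normalized moments $\tilde{g}_k = g_k/h^k$ give a map whose Jacobian is invertible with norm bounded independently of $h$, depending only on $d,p,\tau,m,M$ (this invertibility requires $(\av^c,\xv^c)$ to be a regular configuration, which for the lower bound we arrange by taking $\av \in \R^d$ with alternating signs, as in scheme \ref{s2}). A perturbation of $g_{2p-1}$ by $\delta$ is a perturbation of $\tilde{g}_{2p-1}$ by $\delta h^{-(2p-1)}$, which by the quantitative Prony estimates of \cite{akinshin2015accuracy, batenkov2013geometry} induces parameter perturbations of the same order; undoing the rescaling produces some index $j_1$ with $|x_{j_1}'-x_{j_1}| \gtrsim h\cdot\delta h^{-(2p-1)} = \delta h^{-(2p-2)}$ and some $j_2$ with $|a_{j_2}' - a_{j_2}| \gtrsim \delta h^{-(2p-1)}$, which upon substituting the value of $\delta$ are exactly the claimed scalings.

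Finally, to check \eqref{eq:feps.fourier}, I would expand $\F(F_\epsilon - F)(s)$ as a Taylor series at $s=0$: since only the cluster part is perturbed and the first $2p-1$ moments of the two clusters agree by construction, all terms of order $\le 2p-2$ vanish, the leading nonzero term equals $\tfrac{(-2\pi i s)^{2p-1}}{(2p-1)!}\delta$, and the higher-order terms involve $g_k(F_\epsilon)-g_k(F)$ for $k\ge 2p$. Using the parameter bounds just obtained together with $|x_j|,|x_j'|\le h/2$, a direct estimate yields $|g_k(F_\epsilon)-g_k(F)|\lesssim \delta\, h^{k-2p+1}$ for $k\ge 2p-1$; summing the Taylor series on $|s|\le\Omega$ under $\Omega h \le 2$ gives $|\F(F_\epsilon-F)(s)|\lesssim \Omega^{2p-1}\delta$, and $c_0$ is chosen so that this is at most $\epsilon$. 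The principal technical obstacle is the two-sided Jacobian analysis in the third paragraph, where one must exhibit coordinates $j_1,j_2$ along which the normalized $(2p-1)$-th moment perturbation is not absorbed by cancellations; this is precisely where the alternating-sign amplitude structure plays a decisive role, as it rules out the degeneracies of the Prony Jacobian that would otherwise allow a small parameter change to produce a non-negligible shift in $g_{2p-1}$.
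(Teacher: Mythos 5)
Your construction is essentially the paper's: decompose $F$ into a cluster part and a non-cluster part, perturb only the $(2p-1)$-th algebraic moment of the cluster by $\delta\asymp\Omega^{-(2p-1)}\epsilon$, recover the new cluster parameters by inverting the Prony/moment map, and control $\left|\F(F_\epsilon-F)(s)\right|$ on $|s|\le\Omega$ by the moment--Taylor expansion. The rescaling by $h$ rather than by $\Omega$ is cosmetic, and your scalings for $\delta$, the node shift, and the amplitude shift match the target bounds exactly. Where you genuinely diverge is in bounding the tail of the Taylor series: you estimate $|g_k(F_\epsilon)-g_k(F)|\lesssim k\,\delta\, h^{\,k-2p+1}$ directly from the parameter perturbation bounds and sum, while the paper factors out the perturbed moment sequence and applies Tur\'an's First Theorem (the ``Taylor domination'' inequality, Theorem \ref{thm:turan.taylor.dom}) after bounding the radius via Proposition \ref{prop.radius.conv.ub}. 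Your more elementary route does close, since $\sum_{k\ge 2p-1}\frac{(2\pi\Omega h)^k}{k!}(1+k)\lesssim(\Omega h)^{2p-1}$ under $\Omega h\le 2$, which cancels the $h^{1-2p}$ and yields $\lesssim\Omega^{2p-1}\delta$; this is a modest simplification relative to invoking Tur\'an's theorem.

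The genuine gap is in the third paragraph, where you assert that invertibility of the Prony Jacobian ``requires $(\av^c,\xv^c)$ to be a regular configuration, which \ldots we arrange by taking $\av$ with alternating signs,'' and later that the alternating-sign structure ``rules out the degeneracies.'' Both claims are incorrect. The confluent-Vandermonde factorization \eqref{eq.jacobian.factorization} shows the Jacobian is nondegenerate for \emph{any} pairwise-distinct nodes and nonzero amplitudes; no sign pattern is involved. More importantly, the substantive content you need is not invertibility but the \emph{two-sided} estimate that the single-moment perturbation $\delta e_{2p-1}$ forces a node shift $\gtrsim\delta h^{-(2p-2)}$ in some coordinate \emph{and} an amplitude shift $\gtrsim\delta h^{-(2p-1)}$ in some (possibly different) coordinate. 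This is precisely what the paper imports as Theorem \ref{thm:moments-lb} (from \cite{akinshin2017error}), and that result is stated for arbitrary real amplitude vectors satisfying the magnitude bounds --- no alternating-sign hypothesis. By ``arranging'' alternating signs you have silently weakened the statement: Proposition \ref{prop.perturbation.large} is asserted for a \emph{given} $\av\in\R^d$ with $m\le\|\av\|\le M$, not a freely chosen one, so the proof must cover the general case. You have correctly identified that the two-sided Prony estimate is the principal technical input, but you misattribute the reason it holds and you would need to drop the alternating-sign restriction and cite (or reprove) the two-sided bound in the generality the paper uses.
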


Assuming validity of Proposition ~\ref{prop.perturbation.large}, let us
 prove Theorem
~\ref{thm.accuracy.bounds.lower}.

\begin{proof}[Proof of Theorem~\ref{thm.accuracy.bounds.lower}]
  
  Let $\av\in\R^d$ be any real amplitude vector satisfying
  $m\leq\|\av\|\leq M$. Let $\Omega,h$ satisfy $\Omega h \leq 2$, and
  choose $\xv$ to be the configuration with cluster nodes
  $$\xv^c=\left(x_1=0,x_1=\tau h,\dots,x_p=(p-1)\tau h\right),$$
  with the rest of the nodes equally spaced in
  $\left((p-1)\tau h,1\right)$. Now denote $h'=(p-1)\tau h$ and
  $\tau'={1\over{p-1}}$. Clearly, $\xv$ is a
  $(p,h',1,\tau',\eta)$-clustered configuration for all sufficiently
  small $h$ (for instance, $h<{1\over d}< 1-\eta(d-p+1)$). Now we
  apply Proposition \ref{prop.perturbation.large} with the signal
  $F=(\av,\xv)$. Since $\tau'$ does not depend on $\tau$, and
  therefore the constants $c_1,k_1,k_2$ depend only on $d,p,m,M$, we
  conclude that for $\epsilon<c_1(p-1)^{2p-1}(\Omega\tau h)^{2p-1}$
  and $\Omega h < \frac{2}{(p-1)\tau}$, there exist
  $j_1,j_2\in\left\{1,\dots,p\right\}$ such that
    \begin{align*}
    	diam(E^{\xv,j_1}_{\epsilon,\Omega}(F)) & \ge {k_1\over\Omega} (p-1)^{-2p+2} \epsilon 
    		  (\Omega \tau h)^{-2p+2} ,\\
    	diam(E^{\av,j_2}_{\epsilon,\Omega}(F)) &\ge k_2 \epsilon (p-1)^{-2p+1}
    		(\Omega\tau h)^{-2p+1}.
    \end{align*}

    Now we consider the case of a non-cluster node,
    $x_j\in\xv\setminus\xv^c$. Let $F=(\av,\xv)$ be the signal
    above. Decompose $F$ as follows:
    $$
    F(x)=a_j\delta(x-x_j) + \underbrace{\sum_{\ell\neq
        j}a_{\ell}\delta(x-x_{\ell})}_{F^{o}}.
    $$
    Now let $\epsilon$ be fixed. Define $a_j'=a_j+{\epsilon\over 2}$
    and $x_j'=x_j + \frac{\epsilon}{4\pi\Omega M}$. Put
    $F'_j(x)=a_j'\delta(x-x_j')+F^{o}(x)$. For $|s|\leq\Omega$, the
    difference between the Fourier transforms of $F$ and $F_j'$
    satisfies
    \begin{align*}
      \left|{\cal F}(F)(s)-{\cal F}(F_j')(s)\right| &= \left|
                                                    a_je^{2\pi i x_j s} - a_j' e^{2\pi i x_j' s}\right| \\ &\leq
                                                                                                             \left| a_j e^{2\pi i x_j s}\left(1-e^{2\pi i {\epsilon\over
                                                                                                             {4\pi\Omega M}} s}\right)\right| + \left|a_j'-a_j\right| \\
                                                  &\leq {\epsilon\over 2} + {\epsilon\over 2} = \epsilon.
    \end{align*}
    Since the constants do not depend on $\tau$ at all, and the above
    construction of $F_j'$ can be repeated for each
    $j\notin\left\{\kappa,\dots,\kappa+p-1\right\}$, the
    proof of the non-cluster node case is finished.

    Again, the case of general $T$ follows by rescaling and applying
    Proposition \ref{prop:normalization} (as was done in the proof of
    Theorem \ref{thm.accuracy.bounds.upper}).

    This finishes the proof of Theorem \ref{thm.accuracy.bounds.lower}
    with $C_1'=\max\left(\frac{k_1}{(p-1)^{2p-2}},\frac{1}{4\pi
        M}\right)$, $C_2'=\max\left({1\over 2},\frac{k_2}{(p-1)^{2p-1}}\right)$,
    $C_3'=c_1(p-1)^{2p-1}$, $C_4'={1\over d}$ and $C_5'=2$.
\end{proof}

In the rest of this section we prove Proposition
~\ref{prop.perturbation.large}.

We start by stating the following result which has been shown in
\cite[Theorems 4.1 and 4.2]{akinshin2017error}.

\begin{theorem}\label{thm:moments-lb}
  Given the parameters $0 < h \le 2$, $0<\tau\leq 1$,
  $0<m\leq M<\infty$, let the signal $F=(\av,\xv)\in\P_d$ with
  $\av\in\R^d$ form a single uniform cluster as follows:
\begin{itemize}
\item (centered) $x_d=-x_1$;
\item (uniform) for $1\leq j<k\leq d$ we have
  $$
  \tau h \leq \left| x_j-x_k\right| \leq h;
  $$
\item $m\leq \|a_j\|\leq M$.
\end{itemize}
Then there exist constants $K_1,\dots,K_5$ depending only on
$\left(d,\tau,m,M\right)$ such that for every $\epsilon<K_5 h^{2d-1}$,
there exists a signal $F_{\epsilon}=(\vec{b},\vec{y})\in\P_d$ such that
\begin{enumerate}
\item $m_k\left(F\right)=m_k\left(F_{\epsilon}\right)$ for
  $k=0,1,\dots,2d-2$, where $m_k$ are given by
  \eqref{eq:prony-moments};
\item
  $m_{2d-1}\left(F_{\epsilon}\right)=m_{2d-1}\left(F\right)+\epsilon$;
\item $K_1 h^{-2d+2}\epsilon \leq \|\xv-\vec{y}\| \leq K_2 h^{-2d+2}\epsilon$;
\item $K_3 h^{-2d+1}\epsilon \leq \|\vec{b}-\av\| \leq K_4 h^{-2d+1}\epsilon$.
\end{enumerate}
\end{theorem}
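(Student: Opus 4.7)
The plan is to construct $F_\epsilon$ by inverting the Prony moment map $\Pi:\P_d\to\C^{2d}$, $(\av,\xv)\mapsto(m_0(F),\dots,m_{2d-1}(F))$, in a ball of controlled radius around $\Pi(F)$. Define
\[
F_\epsilon:=\Pi^{-1}\bigl(\Pi(F)+\epsilon\,e_{2d-1}\bigr),
\]
which automatically realizes conclusions (1) and (2). The content of the theorem is then to show that this inverse is well-defined in the range $\epsilon<K_5 h^{2d-1}$, and to establish sharp two-sided parameter bounds (3)--(4).

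First I would perform a blowup. Setting $\tilde\xv:=\xv/h$ yields a reference signal $\tilde F=(\av,\tilde\xv)$ whose nodes lie in $[-\tfrac12,\tfrac12]$ with pairwise separations in $[\tau,1]$. The identity $m_k(F)=h^k\,m_k(\tilde F)$ factors the Jacobian as
\[
D\Pi|_F \;=\; D_h\cdot J_0\cdot \mathrm{diag}(I_d,\,h^{-1}I_d),\qquad
D_h=\mathrm{diag}(1,h,\ldots,h^{2d-1}),
\]
where $J_0=D\Pi|_{\tilde F}$ is the confluent Vandermonde-type matrix whose columns are $(\tilde x_j^k)_k$ and $(k\,a_j\,\tilde x_j^{k-1})_k$. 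Standard confluent Vandermonde estimates under the hypotheses give $|\det J_0|\gtrsim m^d\tau^{4\binom{d}{2}}$ and $\|J_0\|\lesssim 1$, hence $\|J_0^{-1}\|\le C(d,\tau,m,M)$. Inverting, $D\Pi|_F^{-1}=\mathrm{diag}(I_d,hI_d)\cdot J_0^{-1}\cdot D_h^{-1}$, so applying it to $\epsilon e_{2d-1}$ introduces the factor $h^{-(2d-1)}$ from $D_h^{-1}$ together with an extra $h$ in the node block, producing precisely the scalings $h^{-(2d-1)}$ for amplitudes and $h^{-(2d-2)}$ for nodes at the linear level.

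Next I would apply a quantitative inverse function theorem (as in Theorem \ref{thm.inverse.function}) to $\Pi$ at $\tilde F$. Combined with a uniform bound on the second derivative of $\Pi$ over a fixed neighborhood of $\tilde F$, the estimate on $\|J_0^{-1}\|$ yields a ball of radius $r\asymp_{d,\tau,m,M}1$ in the moment space of $\tilde F$ on which $\Pi^{-1}$ exists and is Lipschitz. Pulling this ball back through $D_h$ gives a ball of radius $\asymp h^{2d-1}$ around $\Pi(F)$, which produces the admissibility threshold $\epsilon<K_5 h^{2d-1}$. Within this ball the Lipschitz estimate on $\Pi^{-1}$, combined with the factorization above, immediately delivers the upper bounds $\|\xv-\vec y\|\le K_2 h^{-2d+2}\epsilon$ and $\|\vec b-\av\|\le K_4 h^{-2d+1}\epsilon$.

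The main obstacle is the matching lower bounds, which demand that $J_0^{-1}e_{2d-1}$ possess at least one entry of size $\asymp 1$ in the amplitude block and at least one entry of size $\asymp 1$ in the node block, not merely bounded total norm. For this I would invoke Cramer's rule: the $j$-th entry equals $(-1)^{j+2d-1}\det(M_j)/\det(J_0)$, where $M_j$ is the $(2d-1)\times(2d-1)$ minor obtained by deleting the last row and $j$-th column of $J_0$. Both $\det(J_0)$ and the dominant $|\det(M_j)|$ admit closed-form product-of-differences expressions in $\tilde\xv$ and $\av$ via confluent Vandermonde identities, which under $\tau\le|\tilde x_i-\tilde x_j|\le 1$ and $m\le|a_j|\le M$ are bounded above and below by constants depending only on $(d,\tau,m,M)$. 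Selecting the index that maximises $|\det(M_j)|$ in each block furnishes concrete $j_1,j_2$ with $\asymp 1$ leading coefficients. Finally, the nonlinear remainder of the inverse function theorem is $O(\epsilon^2)$ in the blown-up coordinates, so for $K_5$ chosen sufficiently small the linear terms dominate and the lower bounds (3)--(4) follow with strictly positive $K_1,K_3$.
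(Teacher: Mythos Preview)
The paper does not prove this theorem; it quotes it from \cite{akinshin2017error} (Theorems~4.1 and~4.2 there) and uses it as a black box. Your outline is essentially a correct proof and likely close in spirit to what appears in that reference: rescale by $h$, invert the Prony map near the normalized signal via a quantitative inverse function theorem, and read off the scalings from the confluent-Vandermonde factorization of the Jacobian.

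One step needs sharpening. The last column of $J_0^{-1}$ can be read off from the Hermite interpolation basis (cf.\ the discussion preceding Theorem~\ref{thm.Gautschi}): the node-block entry is
\[
(J_0^{-1})_{d+j,\,2d}\;=\;\frac{1}{a_j\prod_{i\ne j}(\tilde x_j-\tilde x_i)^2},
\]
and your two-sided product bounds apply directly. But the amplitude-block entry is
\[
(J_0^{-1})_{j,\,2d}\;=\;-\frac{P''(\tilde x_j)}{P'(\tilde x_j)^3},\qquad P(z)=\prod_{i=1}^d(z-\tilde x_i),
\]
which is \emph{not} a pure product of node differences and can vanish for some $j$ (e.g.\ the middle node of a symmetric triple). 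So the blanket appeal to ``closed-form product-of-differences expressions bounded above and below'' is not valid for this block. The repair is immediate once you pick the extremal node: for $j=d$ every summand of $\ell_d'(\tilde x_d)=\sum_{i<d}(\tilde x_d-\tilde x_i)^{-1}$ is positive and at least $1$ (since $|\tilde x_d-\tilde x_i|\le 1$), hence
\[
\bigl|(J_0^{-1})_{d,\,2d}\bigr|\;=\;\frac{2\,\ell_d'(\tilde x_d)}{\prod_{i<d}(\tilde x_d-\tilde x_i)^2}\;\ge\;2(d-1).
\]
With this explicit lower bound in hand, your remainder-control argument (the $O(\tilde\epsilon^2)$ term in blown-up coordinates is dominated by the linear term once $K_5$ is small) finishes the proof of (3) and (4).
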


\begin{proof}[Proof of Proposition~\ref{prop.perturbation.large}]
Define $F^c$ and $F^{nc}$ to be the
cluster and the non-cluster part of $F$ correspondingly, i.e.
\begin{align*}
F^c &= \sum_{x_j\in \xv^c} a_j \delta(x-x_j), \\
  F^{nc} &= \sum_{x_j \in \xv\setminus\xv^c} a_j \delta(x-x_j).
\end{align*}
Without loss of generality, suppose that $F^c$ is centered,
i.e. $x_1+x_{p} = 0$. Next, define a blowup of $F^c$ by $\Omega$ as follows:
\begin{equation}
  \label{eq:fc.omega.def}
  F^c_{\left(\Omega\right)}=SC_{1\over\Omega}\left(F^c\right) = \sum_{x_j\in \xv^c} a_j \delta(x-\Omega x_j).
\end{equation}
Put $\tilde{d} = p, \tilde{h}=\Omega h$, and let
$c_1=K_5\left(\tilde{d},\tau,m,M\right)$ as in Theorem
\ref{thm:moments-lb}.  Let
$\epsilon \leq c_1 \left(\Omega h\right)^{2p-1}$. Now we apply Theorem
\ref{thm:moments-lb} with parameters
$\tilde{d},\tilde{h}, \tau, m, M, \tilde{\epsilon}=c_2\epsilon$ and
the signal $F^c_{\left(\Omega\right)}$, where $c_2\leq 1$ will be
determined below. We obtain a signal
$G^c_{\left(\Omega\right),\epsilon}$ such that the following hold for
the difference
$H=G^c_{\left(\Omega\right),\epsilon}-F^c_{\left(\Omega\right)}$:

\begin{align}
  \label{eq:gc.omega.mk} m_k\left(H\right) &= 0,\quad k=0,1,\dots,2p-2, \\
  \label{eq:gc.omega.m2p} m_{2p-1}\left(H\right) &= \tilde{\epsilon};
\end{align}
while also, for some $j_1,j_2\in\left\{1,\dots,p\right\}$
\begin{align}
  \label{eq:gc.omega.px.lb}
  \left| P_{\xv,j_1} \left(G^c_{\left(\Omega\right),\epsilon}\right) - P_{\xv,j_1} \left(F^c_{\left(\Omega\right)}\right)\right|
  &\geq K_1 \left(\Omega h\right)^{-2p+2} \tilde{\epsilon},\\
  \label{eq:gc.omega.px.ub}
  \left| P_{\xv,j} \left(G^c_{\left(\Omega\right),\epsilon}\right) - P_{\xv,j} \left(F^c_{\left(\Omega\right)}\right)\right|
  &\leq K_2 \left(\Omega h\right)^{-2p+2} \tilde{\epsilon},\quad j=1,\dots,p, \\
  \label{eq:gc.omega.ax}
\left| P_{\av,j_2} \left(G^c_{\left(\Omega\right),\epsilon}\right) - P_{\av,j_2} \left(F^c_{\left(\Omega\right)}\right)\right|
&\geq K_3 \left(\Omega h\right)^{-2p+1} \tilde{\epsilon}.
\end{align}

Now put
$$
F^c_{\left(\Omega\right),\epsilon}=SC_{\Omega} \left( G^c_{\left(\Omega\right),\epsilon}\right).
$$

Applying the inverse blowup to the above inequalities, we obtain in
fact that
\begin{align}
  \label{eq:fc.omega.px}
  \left| P_{\xv,j_1} \left(F^c_{\left(\Omega\right),\epsilon}\right) - P_{\xv,j_1} \left(F^c\right)\right| &\geq
  \frac{K_1}{\Omega} \left(\Omega h\right)^{-2p+2} \tilde{\epsilon},\\
  \label{eq:fc.omega.ax}
\left| P_{\av,j_2} \left(F^c_{\left(\Omega\right),\epsilon}\right) - P_{\av,j_2} \left(F^c\right)\right| &\geq K_3 \left(\Omega
h\right)^{-2p+1} \tilde{\epsilon}.
\end{align}

From the above definitions we have
$H_{\Omega}=SC_{\Omega}(H) =
F^c_{\left(\Omega\right),\epsilon}-F^c$. Let us now show that there is
a choice of $c_3$ such that
\begin{equation}
  \label{eq:lower.bound.fourier.tr}
  \left|{\cal{F}}\left(H_{\Omega}\right)\left(s\right)\right| \leq \epsilon,\quad |s|\leq \Omega.
\end{equation}

Put $\omega=s/\Omega$, then
$$
{\cal{F}}\left(H_{\Omega}\right)(s) = {\cal{F}}\left(H\right)\left(\omega\right).
$$

Now we employ the fact that the Fourier transform of a spike train has
Taylor series coefficients precisely equal to its algebraic moments
(see \cite[Proposition 3.1]{akinshin2015accuracy}):
\begin{equation}\label{eq:fourier.taylor}
{\cal{F}}(H)(\omega) = \sum_{k=0}^{\infty} {1\over k!} m_k\left(H\right) \left(-2\pi\imath \omega\right)^k.
\end{equation}

Next we apply the following easy corollary of the Tur\'{a}n's First
Theorem \cite[Theorem 6.1]{turan1984}, appearing in \cite[Theorem
3.1]{mordukhovich_taylor_2016}, using the recurrence relation
satisfied by the moments of $H$ according to Proposition
\ref{prop.prony.rec.rel}.

\begin{theorem}\label{thm:turan.taylor.dom}
  Let $H=\sum_{j=1}^{2p} \beta_j \delta(x-t_j)$, and put
  $R=\min_{j=1,\dots,2p} \left|t_j\right|^{-1} > 0$. Then, for all
  $k\geq 2p$ we have the so-called ``Taylor domination'' property
  \begin{equation}
    \label{eq:taylor.domination}
    \left|m_k(H)\right| R^k \leq \left(\frac{2ek}{2p}\right)^{2p} \max_{\ell=0,1,\dots,2p-1} \left|m_{\ell}\left(H\right)\right| R^{\ell}.
  \end{equation}
\end{theorem}

\begin{proposition}
  \label{prop.radius.conv.ub}
  The constant $R$ in Theorem \ref{thm:turan.taylor.dom} satisfies
  $R\geq C_4$, where $C_4$ does not depend on $\Omega,h$.
\end{proposition}
\begin{proof}
  Recall that
  $H=G^c_{\left(\Omega\right),\epsilon}-F^c_{\left(\Omega\right)}$. The
  nodes of $F^c_{\left(\Omega\right)}$ are, by construction, inside
  the interval $\left[-{\Omega h\over 2},{\Omega h\over
      2}\right]$. The nodes of $G^c_{\left(\Omega\right),\epsilon}$,
  by \eqref{eq:gc.omega.px.ub}, satisfy
  \begin{align*}
      \left| P_{\xv,j} \left(G^c_{\left(\Omega\right),\epsilon}\right)\right| &\leq {\Omega h \over 2} + K_2 \left(\Omega
      h\right)^{-2p+2} \tilde{\epsilon} \\
                                                                            & \leq {\Omega h \over 2} + K_2 \left(\Omega
                                                                            h\right)^{-2p+2} c_1 \left(\Omega
                                                                            h\right)^{2p-1} \\
                                                                            & = \left(\Omega h\right) \left(c_1 K_2 + \frac{1}{2}\right).
  \end{align*}
  Since $\Omega h \leq 2$ by assumption, this concludes the proof with $C_4=\frac{1}{2\left(c_1 K_2 + \frac{1}{2}\right)}$.
\end{proof}

Therefore, by \eqref{eq:taylor.domination}, \eqref{eq:gc.omega.mk} and
\eqref{eq:gc.omega.m2p} we have for $k\geq 2p$
\begin{align*}
  \left|m_k\left(H\right)\right| &\leq \left(\frac{e}{p}\right)^{2p} k^{2p} R^{2p-1-k}\tilde{\epsilon} \\
  & \leq C_5 C_4^{2p-1-k} k^{2p} \tilde{\epsilon}.
\end{align*}

Now plugging this into \eqref{eq:fourier.taylor} we obtain
\begin{align*}
  \left| {\cal{F}}\left(H\right)\left(\omega\right)\right| & \leq \frac{\tilde{\epsilon} \left|2\pi\omega\right|^{2p-1}}{(2p-1)!} +  C_5 C_4^{2p-1} \tilde{\epsilon} \sum_{k\geq 2p} \left(\frac{2\pi \left| \omega \right|}{C_4}\right)^k \frac{k^{2p}}{k!}.
\end{align*}
Put $\zeta={2\pi|\omega| \over C_4}$, then, since $|\omega|\leq 1$,
\begin{align*}
  \left| {\cal{F}}\left(H\right)\left(\omega\right)\right| & \leq C_6 \tilde{\epsilon}  \sum_{k\geq 2p-1}\zeta^k \frac{k^{2p}}{k!} \\
  &\leq C_7 \tilde{\epsilon}.
\end{align*}
We can therefore choose $c_2=\min\left(1,\frac{1}{C_7}\right)$ to ensure that
$$
\left| {\cal{F}}\left(H\right)\left(\omega\right)\right| \leq \epsilon, \quad |\omega|\leq 1,
$$
which shows \eqref{eq:lower.bound.fourier.tr}.

Finally, construct the signal
$F_{\epsilon}=F^{nc}+F^c_{\left(\Omega\right),\epsilon}$. Combining
\eqref{eq:lower.bound.fourier.tr}, together with
\eqref{eq:fc.omega.px} and \eqref{eq:fc.omega.ax} finishes the proof
of Proposition \ref{prop.perturbation.large} with $k_1=K_1$ and $k_2=K_3$.
\end{proof}

\bibliographystyle{plain}
\bibliography{bib}{}

\newpage
\appendix

\section{Algebraic Prony system}
\label{appendix.prony}

The so-called Prony system of equations relates the parameters
of the signal $F$ as in \eqref{eq.spike.train.signal} and its
\emph{algebraic moments}

\begin{equation}
  \label{eq:prony-moments}
  m_k\left(F\right)=\int F(x) x^k dx = \sum_{j=1}^d a_j x_j^k,\quad k=0,1,\dots,.
\end{equation}

Extending the above to arbitrary complex nodes and amplitudes,
we define the {\it Prony map} $PM:\C^{2d} \rightarrow \C^{2d}$
as follows:
\begin{equation}\label{eq.def.PM}
  PM_k(a_1,\ldots,a_d,w_1,\ldots,w_d)= \sum_{j=1}^{d}a_jw_j^k, \tab k=0,1,\dots,2d-1.
\end{equation}

Now consider the system of equations defined by $PM$,
i.e. with unknowns $\left\{a_j,z_j\right\}_{j=1}^d\in\C^{2d}$ and a
given right hand side
$\mu=(\mu_0,\ldots,\mu_{2d-1}) \in \C^{2d}$,
\begin{align}\label{eq.prony}
  PM_k\left(a_1,\dots,a_d,z_1,\dots,z_d\right)=\mu_k, & & k=0,1,\dots,2d-1.
\end{align} 
The following fact can be found in the literature about Prony
systems and Pad\'e approximation (see
e.g. \cite{batenkov2013geometry} Propositions 3.2 and 3.3).
\begin{proposition}\label{prop.unique.solution.prony}
  If a solution $(a_1,\ldots,a_d,z_1,\ldots,z_d)$ to System
  \eqref{eq.prony} exists with $a_j \ne 0,\ j=1,\ldots,d$ and
  for $1 \le j < k \le d$, $z_j\ne z_k$, it is unique up to a
  permutation of the nodes $\{z_j\}$ and corresponding
  amplitudes $\{a_j\}$.
\end{proposition}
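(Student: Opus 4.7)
The plan is to merge two hypothetical solutions into a single exponential sum of length at most $2d$ whose first $2d$ algebraic moments all vanish, and then to invoke the nonsingularity of a rectangular Vandermonde matrix in distinct nodes to force the sum to be identically zero. This sidesteps the more commonly used Hankel/Prony-polynomial route and keeps the bookkeeping transparent.

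First I would assume two solutions $(a_j,z_j)_{j=1}^d$ and $(a'_j,z'_j)_{j=1}^d$ of \eqref{eq.prony} meeting the hypotheses and producing the same right-hand side $\mu\in\C^{2d}$. Let $w_1,\ldots,w_N$ be an enumeration (without repetition) of the set $\{z_j\}\cup\{z'_j\}\subset\C$, so that $N\le 2d$. For each $i$, let $b_i$ denote the amplitude attached to $w_i$ by the first solution (zero if $w_i\notin\{z_j\}$) and define $b'_i$ analogously for the second solution. Setting $c_i := b_i-b'_i$ and subtracting the two copies of \eqref{eq.prony} coordinate-wise yields the homogeneous system
$$
\sum_{i=1}^{N} c_i\, w_i^{k} \;=\; 0, \qquad k = 0,1,\ldots,2d-1.
$$

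The crucial observation is that this is a rectangular Vandermonde system with $2d$ rows and $N\le 2d$ columns in \emph{pairwise distinct} nodes $w_1,\ldots,w_N$. Any $N\times N$ submatrix obtained by selecting $N$ of the $2d$ rows (say, the first $N$) is a square Vandermonde matrix with nonzero determinant, so the full matrix has column rank $N$; consequently $c_i=0$ for every $i$. Translating this back to the original parameters: for every index $j$ we have $a_j\ne 0$, so the entry $c_i$ corresponding to $w_i=z_j$ gives $b'_i = a_j\ne 0$, which forces $w_i$ to also appear among the $\{z'_k\}$ with the same amplitude $a_j$. By the symmetric argument applied to the second solution, the two $d$-element multisets of (node, amplitude)-pairs coincide, which is precisely uniqueness up to permutation.

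I do not anticipate any real obstacle: the content is classical linear algebra, and the only minor subtlety is that individual $z_j$'s may coincide with some $z'_k$'s, which is handled cleanly by first passing to the union $\{w_i\}$ of distinct complex numbers and working with signed amplitude differences before invoking the Vandermonde rank bound.
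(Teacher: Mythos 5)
Your argument is correct and elementary; note that the paper does not actually prove this proposition but cites it as Propositions 3.2--3.3 of \cite{batenkov2013geometry}, so there is no in-paper proof to compare against. Your route --- pass to the union of distinct nodes, form signed amplitude differences, and kill them by full column rank of a tall Vandermonde matrix in distinct nodes --- is clean and avoids the Hankel/Prony-polynomial machinery.

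One small point deserves tightening. The phrase ``by the symmetric argument applied to the second solution'' implicitly assumes that the second solution $(a'_j,z'_j)$ also has pairwise distinct nodes and nonzero amplitudes, which the proposition does \emph{not} hypothesize (and which the paper's subsequent use in Proposition \ref{prop.global.geo} does not guarantee a priori: the mapped nodes $e^{2\pi i\lambda y'_j}$ of an arbitrary $F'\in\P_d$ could coincide, and amplitudes in $\P_d$ may vanish). When the second solution is degenerate, the quantity $b'_i$ is the \emph{sum} of all amplitudes $a'_k$ with $z'_k=w_i$, so ``appears with the same amplitude $a_j$'' does not immediately give a bijection. The fix is a one-line pigeonhole: from $b_i=b'_i$ and $a_j\neq0$ you already get $\{z_1,\ldots,z_d\}\subseteq\{z'_1,\ldots,z'_d\}$; since the left side has $d$ distinct elements and the right side is a $d$-element multiset, the inclusion forces the $z'_k$ to be pairwise distinct and to be exactly a permutation of the $z_j$, after which $a'_k=b'_i=a_j\neq0$ follows for the unique matching index. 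With that sentence added, the proof is complete and covers degenerate competitors as well.
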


Clearly, the definition of $PM_k$ is valid for arbitrary
integer $k\in\NN$. The next fact is very well-known, and it is
the basis of Prony's method of solving \eqref{eq.prony}.

\begin{proposition}
  \label{prop.prony.rec.rel} 
  Let the sequence $\nu=\left\{\nu_k\right\}_{k\in\NN}$ be
  given by
  $$\nu_k=PM_k\left(a_1,\dots,a_d,z_1,
    \dots,z_d\right).$$
  Then each consecutive $d+1$ elements of $\nu$ satisfy the
  following linear recurrence relation:
  \begin{equation}
    \label{eq:moments.rec.rel}
    \sum_{\ell=0}^{d} \nu_{k+\ell} c_{\ell} = 0,
  \end{equation}
  where the constants $\left\{c_{\ell}\right\}_{\ell=0}^d$ are the
  coefficients of the (monic) polynomial with roots
  $\left\{z_1,\dots,z_d\right\}$ (the ``Prony polynomial''),
  i.e.
  \begin{equation}
    \label{eq:rec.rel.cj.def}
    Q(z)=\prod_{j=1}^d \left(z-z_j\right) \equiv \sum_{\ell=0}^d c_{\ell} z^{\ell}.
  \end{equation}
\end{proposition}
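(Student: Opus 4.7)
The plan is to prove the identity by direct substitution and an interchange of summation, exploiting the defining property of the Prony polynomial, namely that $Q(z_j)=0$ for each $j=1,\dots,d$.

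First, I would fix an arbitrary index $k\in\NN$ and expand the left-hand side of \eqref{eq:moments.rec.rel} using the definition $\nu_{k+\ell}=\sum_{j=1}^d a_j z_j^{k+\ell}$. This gives a finite double sum over $\ell\in\{0,\dots,d\}$ and $j\in\{1,\dots,d\}$, so Fubini is trivial and I may swap the order of summation without any convergence concerns.

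After swapping, I would factor $z_j^k$ out of the inner sum over $\ell$, producing the expression $\sum_{j=1}^d a_j z_j^k \bigl(\sum_{\ell=0}^d c_\ell z_j^\ell\bigr)$. The inner bracket is exactly $Q(z_j)$ by \eqref{eq:rec.rel.cj.def}, and since each $z_j$ is a root of $Q$ by construction, every term vanishes and the whole sum is zero. This yields \eqref{eq:moments.rec.rel} for the fixed $k$, and since $k$ was arbitrary the recurrence holds for every $k\in\NN$.

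There is no real obstacle here; the only thing to be mindful of is that $Q$ is monic of degree $d$ (so $c_d=1$ and the recurrence is genuinely of order $d$, not lower), which follows directly from the product form in \eqref{eq:rec.rel.cj.def}. The amplitudes $a_j$ and the particular value of $k$ play no role in the cancellation, which is why the recurrence is the same regardless of the amplitudes and applies uniformly along the entire sequence.
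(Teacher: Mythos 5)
Your argument is correct and matches the paper's proof almost verbatim: expand $\nu_{k+\ell}$ by definition, swap the two finite sums, factor out $z_j^k$ to reveal $Q(z_j)$, and conclude since each $z_j$ is a root of $Q$. The added remark that $c_d=1$ keeps the recurrence genuinely of order $d$ is a harmless bonus but not needed for the stated identity.
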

\begin{proof}
  Let $k\in\NN$, then
  \begin{align*}
    \sum_{\ell=0}^d \nu_{k+\ell} c_{\ell} &= \sum_{
                                            \ell=0}^d c_{\ell} \sum_{j=1}^d a_j z_j^{k+\ell} \\
                                          &= \sum_{j=1}^d a_j z_j^k Q(z_j) = 0.\qedhere
  \end{align*}
\end{proof}

\begin{proposition}[Prony's method]\label{prop.prony.method}
  Let there be given the algebraic moments $\{m_k(F)\}_{k=0}^{2d-1}$
  of the signal $F=(\av,\xv)$ where the nodes of $\xv$ are pairwise
  distinct and $\|a\|>0$.  Then the parameters $(\av,\xv)$ can be
  recovered exactly by the following procedure:
  \begin{enumerate}
  \item Construct the  $d\times (d+1)$ Hankel matrix
    $H=\left[ m_{i+j}\right]_{0\leq i \leq d-1}^{0\leq j \leq d}$;
  \item Find a nonzero vector $\vec{c}$ in the null-space of $H$;
  \item Find $\xv_j$ to be the roots of the Prony polynomial
    \eqref{eq:rec.rel.cj.def}, whose coefficient vector is $\vec{c}$;
  \item Find the amplitudes $\av$ by solving the linear system
    $V\av=\vec{m}$, where $V$ is the Vandermonde matrix $V=\left[\xv_j^k\right]^{j=1,\dots,d}_{k=0,\dots,d-1}$.
  \end{enumerate}
\end{proposition}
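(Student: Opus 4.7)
My plan is to verify the four steps of the procedure in order, leveraging Proposition \ref{prop.prony.rec.rel} (which already establishes the crucial linear recurrence satisfied by the moments) and a standard Vandermonde factorization argument to pin down the rank of the Hankel matrix.

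First, I would observe that $\ker H$ is nontrivial and in fact contains the coefficient vector of the Prony polynomial. Indeed, by Proposition \ref{prop.prony.rec.rel} applied with $z_j = x_j$, the moment sequence satisfies
\[
  \sum_{\ell=0}^{d} m_{k+\ell}(F)\, c_\ell = 0, \qquad k=0,1,2,\dots,
\]
where $\vec{c}=(c_0,\dots,c_d)$ is the coefficient vector of $Q(z)=\prod_{j=1}^d(z-x_j)$. Specializing to $k=0,1,\dots,d-1$ gives exactly $H\vec{c}=0$, so the true Prony coefficient vector lies in $\ker H$, and in particular Step 2 of the algorithm is well-defined and the polynomial constructed in Step 3 has $\{x_j\}$ among its roots.

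The main content is to show that $\ker H$ is one-dimensional, so that any nonzero vector returned in Step 2 is a scalar multiple of $\vec{c}$ and therefore yields the Prony polynomial uniquely up to the irrelevant overall scale. To this end I would consider the leading $d\times d$ block $H' = [m_{i+j}]_{i,j=0}^{d-1}$ and use the Vandermonde factorization
\[
  H' \;=\; V\,D\,V^{T}, \qquad V=\bigl[x_j^{\,i}\bigr]_{i=0,\dots,d-1}^{j=1,\dots,d}, \qquad D=\operatorname{diag}(a_1,\dots,a_d),
\]
which is immediate from $m_{i+j}=\sum_\ell a_\ell x_\ell^{i+j}$. Since the $x_j$ are pairwise distinct, $V$ is nonsingular, and since (in the relevant regime) all $a_j\neq 0$, $D$ is nonsingular as well, so $\operatorname{rank}(H')=d$. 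Hence $\operatorname{rank}(H)=d$ and $\dim\ker H = 1$, forcing the vector found in Step 2 to be a nonzero multiple of $\vec{c}$. Its roots are therefore exactly $\{x_j\}_{j=1}^d$, proving correctness of Step 3.

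Finally, for Step 4, once the correct nodes $\xv$ are in hand, the Vandermonde matrix $V=[x_j^{k}]_{k=0,\dots,d-1}^{j=1,\dots,d}$ is invertible by distinctness of the nodes, so the linear system $V\av=\vec{m}$ has a unique solution; by the defining identity $m_k=\sum_j a_j x_j^k$, that unique solution is the true amplitude vector. I do not anticipate any real obstacle: the only nontrivial point is the rank computation via the Vandermonde factorization, and multiplicity/ordering ambiguity of the recovered $\{x_j\}$ is harmless since the same permutation is then inherited by $\av$ in Step 4, matching the equivalence class stipulated in Proposition \ref{prop.unique.solution.prony}.
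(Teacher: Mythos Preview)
Your argument is the standard proof of Prony's method and is correct. The paper itself does not give a proof here at all; it simply writes ``See e.g.\ \cite{batenkov2013geometry}'' and moves on. So there is no paper-level argument to compare against, and your self-contained verification via the recurrence from Proposition~\ref{prop.prony.rec.rel} together with the Vandermonde factorization $H'=VDV^{T}$ is exactly the classical route one would find in the cited reference.

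One small point worth tightening: the hypothesis in the statement is $\|\av\|>0$, which in the paper's max-norm convention only guarantees that \emph{some} amplitude is nonzero, not all of them. Your rank argument for $H'$ genuinely needs every $a_j\neq 0$ (otherwise $D$ is singular, $\operatorname{rank}H<d$, and $\ker H$ is no longer one-dimensional, so Step~2 can return a polynomial whose roots are not the $x_j$). You already flag this with ``in the relevant regime,'' and indeed everywhere the paper actually invokes this proposition the amplitudes are all nonzero, so the issue is cosmetic rather than mathematical; but it would be cleaner to state the needed assumption explicitly rather than leave it parenthetical.
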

\begin{proof}
  See e.g. \cite{batenkov2013geometry}.
\end{proof}

\section{Quantitative Inverse Function Theorem}\label{sec.inverse.fucntion}
Here we prove a certain quantitative version of the
inverse function theorem, which applies to holomorphic mappings
$\C^{d} \rightarrow \C^{d}$ (here $d$ is a generic parameter).

\smallskip	

For $\av \in \C^d$ and $r_1,\ldots,r_d>0$, let $H_{r_1,\ldots,r_d}(\av) \subset \C^d$
be the closed polydisc centered at $\av$, 
$$ 
H_{r_1,\ldots,r_d}(\av) = \{ \xv \in \C^d : | \xv_j - \av_j |
\le r_j, \mbox{ for all } j = 1,\dots,d \}.
$$

\smallskip

For $j=1,\ldots,d,$ we denote by $P_j:\C^d \rightarrow \C$ the
orthogonal projection onto the $j^{th}$ coordinate.  With some abuse
of notation we will also treat $P_j$ as the $d\times d$ matrix
representing this projection.

\smallskip

Finally recall Definition \ref{def.r.cube} of the hypercube $Q_r$.

\begin{theorem}\label{thm.inverse.function}
  Let $U \subseteq {\mathbb C}^d$ be open. Let $f: U \rightarrow {\mathbb C}^d$ be a holomorphic injection 
  with an invertible Jacobian $J(\xv)$, for all $\xv \in U$. For $\av \in U$ and $r_1,\ldots,r_d>0$, let $H(\av)=H_{r_1,\ldots,r_d}(\av) \subset U$
  be such that for all $\xv \in H(\av)$,
  $$\sum_{k=1}^d| J^{-1}_{j,k}(\xv)| \le \alpha_j, \tab j=1,\ldots,d.$$ 
  Put $\vec{b}=f(\av)$ and $f(U)=V$. Then:
  \begin{enumerate} 
  \item For $R = \min(\frac{r_1}{\alpha_1},\ldots,\frac{r_d}{\alpha_d})$, $Q_R(\vec{b}) \subseteq f(H(\av))$ 
    and $f^{-1}: V \rightarrow U$ is holomorphic in an open neighborhood of $Q_R(\vec{b})$.
  \item For each $j=1,\ldots,d$, $f_j^{-1}=P_jf^{-1}:Q_{R}(\vec{b})\rightarrow \C^d$ is Lipschitz on $Q_{R}(\vec{b})$ with
    $$
    |f^{-1}_j(\vec{y}'')-f^{-1}_j(\vec{y}')| \le \alpha_j \|\vec{y}''-\vec{y}'\|,
    $$
    for each $\vec{y}',\vec{y}'' \in Q_R(\vec{b})$.
  \end{enumerate} 	
\end{theorem}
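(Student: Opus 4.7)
The plan is to prove both claims by a quantitative path-lifting argument in the holomorphic category. Since $f$ is a holomorphic injection on $U$ with $J(\xv)$ invertible everywhere on $U$, the classical holomorphic inverse function theorem applied at each point produces a local holomorphic inverse in a neighborhood of every point of $V=f(U)$. Global injectivity of $f$ forces these local inverses to agree on overlaps, so they patch into a single well-defined holomorphic map $f^{-1}:V\to U$. The content of the theorem is thus purely quantitative: one must show $Q_R(\vec{b})\subseteq f(H(\av))\subseteq V$ and bound the coordinatewise Lipschitz constants of $f^{-1}$ by the row-sum hypotheses on $J^{-1}$.

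For claim 1, fix $\vec{y}\in Q_R(\vec{b})$ and consider the straight-line segment $\gamma(t)=\vec{b}+t(\vec{y}-\vec{b})$, $t\in[0,1]$. The local inverse at $\av=f^{-1}(\vec{b})$ lifts $\gamma$ to a curve $\tilde{\gamma}$ starting at $\av$; I let $T^{*}\in(0,1]$ be the supremum of $t$ for which $\tilde{\gamma}$ extends continuously with $\tilde{\gamma}([0,t])\subset H(\av)$. Differentiating the identity $f(\tilde{\gamma}(s))=\gamma(s)$ gives $\tilde{\gamma}'(s)=J^{-1}(\tilde{\gamma}(s))(\vec{y}-\vec{b})$, so projecting to coordinate $j$ and integrating,
\begin{equation*}
|P_j\tilde{\gamma}(t)-a_j|\le\int_0^t\sum_{k=1}^d|J^{-1}_{j,k}(\tilde{\gamma}(s))|\cdot|y_k-b_k|\,ds\le t\,\alpha_j\|\vec{y}-\vec{b}\|\le t\,\alpha_j R\le t\,r_j,
\end{equation*}
where I used the row-sum bound on $H(\av)$ and $R\le r_j/\alpha_j$. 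Since the right-hand side is strictly less than $r_j$ for $t<1$, the lift stays in the interior of $H(\av)$, so a fresh application of the local inverse function theorem at $\tilde{\gamma}(t)$ extends the lift past $t$; the set of admissible $t\in[0,1]$ is therefore both open and closed, forcing $T^{*}=1$. Hence $\vec{y}\in f(H(\av))$ and $Q_R(\vec{b})\subseteq f(H(\av))$, and the patched local inverses make $f^{-1}$ holomorphic on an open neighborhood of $Q_R(\vec{b})$.

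For claim 2, pick $\vec{y}',\vec{y}''\in Q_R(\vec{b})$; by convexity of the cube, the segment $\gamma(t)=\vec{y}'+t(\vec{y}''-\vec{y}')$ lies entirely in $Q_R(\vec{b})$, so its lift $\tilde{\gamma}(t)=f^{-1}(\gamma(t))$ is well-defined and holomorphic in a neighborhood of $[0,1]$ by claim 1. The identical chain-rule computation yields
\begin{equation*}
|f^{-1}_j(\vec{y}'')-f^{-1}_j(\vec{y}')|\le\int_0^1\sum_{k=1}^d|J^{-1}_{j,k}(\tilde{\gamma}(t))|\cdot|y''_k-y'_k|\,dt\le\alpha_j\|\vec{y}''-\vec{y}'\|,
\end{equation*}
which is the desired Lipschitz estimate.

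The only genuine subtlety I anticipate is the continuation argument forcing $T^{*}=1$: one must rule out the lift escaping $H(\av)$ or running out of local inverses before reaching the endpoint. The strict inequality $t\,\alpha_j R<r_j$ for $t<1$ confines the lift to the interior of $H(\av)$, while $H(\av)\subset U$ guarantees uniform invertibility of $J$ along the lift, so a standard open--closed argument on $\{t\in[0,1]:\tilde{\gamma}\text{ extends to }[0,t]\text{ in }H(\av)\}$ closes the gap. Once this step is in place, the rest reduces to the straightforward line-integral estimates above.
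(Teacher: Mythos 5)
Your proof is correct and takes a genuinely different route from the paper's for Claim~1. The paper argues by extremality and compactness: it lets $Q_{R'}(\vec{b})$ be the \emph{maximal} cube inside $S=f(H(\av))$, locates a point $\vec{p}\in\partial S\cap\partial Q_{R'}(\vec{b})$, applies the integral mean-value estimate to the single segment from $\vec{b}$ to $\vec{p}$, and exploits the fact that a biholomorphism carries $\partial H(\av)$ onto $\partial S$ to produce a coordinate $\hat{j}$ attaining $r_{\hat{j}}=|f^{-1}_{\hat{j}}(\vec{p})-a_{\hat{j}}|$, from which $R'\ge R$ follows. You instead run a constructive path-lifting continuation: lift each segment from $\vec{b}$ to an arbitrary $\vec{y}\in Q_R(\vec{b})$ through $f^{-1}$, use the same row-sum bound to show the lift stays strictly inside $H(\av)$ for $t<1$, and close with an open--closed argument on the set of admissible $t$. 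Your route avoids the topological fact about boundary images and makes more transparent where $H(\av)\subset U$ and the closedness of the polydisc enter; the paper's route is shorter, deferring all the work to a single extremal segment. Both hinge on exactly the same coordinatewise integral estimate, and Claim~2 is handled identically in both. The only thing you leave slightly implicit is the ``closed'' half of the open--closed argument: one should note that the uniform derivative bound $\|\tilde{\gamma}'(s)\|\le(\max_j\alpha_j)\|\vec{y}-\vec{b}\|$ on $H(\av)$ makes the lift Lipschitz, hence it extends to the endpoint of any admissible interval, and closedness of the polydisc keeps the limit point inside $H(\av)$; this is routine but worth a sentence.
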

\begin{proof}
  First we show that $f(U)=V$ is open and $f^{-1}$ is holomorphic and provides a homeomorphism 
  between $U$ and $V$.
  
  By assumption $f: U \rightarrow V$ is an injection, then $f^{-1}: V \rightarrow U$ is well defined. 
  By assumption $f$ is continuously
  differentiable with non-degenerate Jacobians $J(x)$ for all $x\in U$. Then by the Inverse Function Theorem 
  $V$ is open and $f^{-1}$ is continuously differentiable on $V$.
  We conclude that $f$ is a biholomorphism between $U$ and $V$. 
  \footnote{It is an interesting fact that the condition that $f$ has non-degenerate Jacobians on $U$ can be dropped. 
    Contrary to a real version of Theorem \ref{thm.inverse.function} where this condition is necessary,
    it is true that if $f$ is holomorphic and an injection on the open set $U$ then $f$ is biholomorphism between $U$ and $f(U)$ 
    (see e.g. \cite{range2013holomorphic}, discussion at page 23).}
  
  \smallskip
  
  We now show that for
  $R = \min(\frac{r_1}{\alpha_1},\ldots,\frac{r_d}{\alpha_d})$,
  $Q_R(\vec{b}) \subseteq f(H(\av))$.  $f$ is a homeomorphism between
  $U$ and $V$, hence $S=f(H(\av))$ is a compact subset of $V$.  We
  take $Q_{R'}(\vec{b}) \subseteq S$ as the maximal cube centered at
  $\vec{b}$ that is contained in $S$.
  
  Then, there exists a point $\vec{p}$ such that
  $\vec{p} \in \partial S \cap \partial Q_{R'}(\vec{b})$. Put
  $\vec{h}=\vec{p}-\vec{b}$. $f^{-1}$ is continuously differentiable
  on $V \supset Q_{R'}(\vec{b})$, we can therefore apply the Mean
  Value Theorem in integral form and obtain (here the integral is
  applied to each component of the inverse Jacobian matrix)
  $$
  f^{-1}(\vec{b}+\vec{h})-f^{-1}(\vec{b}) = \left(\int_{0}^1
    J^{-1}(\vec{b}+t\vec{h})dt\right)\vec{h}.
  $$ 
  Then for each coordinate $j=1,\ldots,d,$ 
  \begin{equation}\label{eq.coordinate.mean.value}
    f_j^{-1}(\vec{b}+\vec{h})-f_j^{-1}(\vec{b}) = \left(\int_{0}^1 P_jJ^{-1}(\vec{b}+t\vec{h})dt\right)\vec{h}. 
  \end{equation} 
  $f$ is a homeomorphism between $U$ and $V$ hence $f^{-1}$ maps the boundary
  of $S$ into boundary of $f^{-1}(S)=Q_r(\vec{a})$. Therefore there exists a coordinate $\hat{j} \in \{1,\ldots,d\}$
  such that 
  $$\left|f_{\hat{j}}^{-1}(\vec{b}+\vec{h})-f_{\hat{j}}^{-1}(\vec{b})\right| = r_{\hat{j}}.$$ 
  Then by equation \eqref{eq.coordinate.mean.value}
  $$
  r_{\hat{j}}=\left|f_{\hat{j}}^{-1}(\vec{b}+\vec{h})-f_{\hat{j}}^{-1}(\vec{b})\right| = 
  \left|\left(\int_{0}^1 P_{\hat{j}}J^{-1}(\vec{b}+t\vec{h})dt\right)\vec{h}\right|
  \le \alpha_{\hat{j}} \|\vec{h}\|= \alpha_{\hat{j}} R'.
  $$ 
  Hence $R' \ge \frac{r_{\hat{j}}}{\alpha_{\hat{j}}} \ge \min(\frac{r_1}{\alpha_1},\ldots,\frac{r_d}{\alpha_d}) = R$.
  We get that 
  $$Q_{R}(\vec{b}) \subseteq Q_{R'}(\vec{b}) \subseteq S = f(H(\av)).$$ 
  
  Since we already argued that $V \supset f(H(\vec{a})) \supseteq Q_{R}(\vec{b})$ is open then 
  clearly $f^{-1}$ is holomorphic in an open neighborhood of $Q_{R}(\vec{b})$.
  This proves item (1) of Theorem \ref{thm.inverse.function}.
  
  \smallskip
  
  The second item of the Theorem is proved with a similar argument: 
  let $\vec{y}'',\vec{y}' \in Q_{R}(\vec{b})$ and put $\vec{h}'=\vec{y}''-\vec{y}'$. Applying again the Mean Value Theorem
  $$
  \left|f_j^{-1}(\vec{y}'+\vec{h}')-f_j^{-1}(\vec{y}')\right| = 
  \left|\left(\int_{0}^1 P_jJ^{-1}(\vec{y}'+t\vec{h}')dt\right)\vec{h}'\right|
  \le \alpha_j \|\vec{h}'\|.
  $$
  This proves item (2) of the Theorem.
\end{proof}

\section{Norm bounds on the inverse Jacobian
  matrix}\label{appendix.norm.bounds}

Let $F=(\av,\xv) \in \barP$,
$\av=(a_1,\ldots,x_d),\ \xv=(x_1,\ldots,x_d)$.  Put
$z_j = z_{j}(\lambda) = e^{2\pi i \lambda x_j},\ j=1,\ldots,d$. By
direct computation, the Jacobian matrix
$J=J_\lambda(F)=J_\lambda(\av,\xv)$, of $\FMG$ at $F$ is given by
\begin{equation}\label{eq.jacobian.factorization}
  {\small
  J_\lambda(\av,\xv)=
  \begin{bmatrix}
    1 & .. & 1 & 0 & .. & 0\\
    z_1 & .. & z_d & 1 & .. & 1\\
    z_1^2 & .. & z_d^2 & 2z_1 & .. & 2z_d\\
    \vdots & \ddots & \vdots & \vdots & \ddots & \vdots\\
    z_1^{2d-1} & .. & z_d^{2d-1} &
    (2d-1)z_1^{2d-2} & .. & (2d-1)z_d^{2d-2}
  \end{bmatrix}
  \begin{bmatrix}
    I_d & 0\\
    0 & D
  \end{bmatrix},}
\end{equation}
where $D$ is a $d \times d$ diagonal matrix, $D_{j,j}=a_j 2\pi i\lambda z_j,\ j=1,\ldots,d$,
and $I_d$ is the $d\times d$ identity matrix.

Denote the left hand matrix in the factorization
\eqref{eq.jacobian.factorization} by
$U_{2d}=U_{2d}(z_1,\allowbreak\ldots,z_d)$.  The matrix $U_{2d}$ is an
instance of a confluent Vandermonde matrix, whose inverses have been
extensively studied in
\cite{gautschi1962inverses,gautschi1963inverses,batenkov2016stability}. In
particular, the elements of $U_{2d}^{-1}$ can be constructed using the
coefficients of polynomials from an appropriate Hermite interpolation
scheme.  Consequently, we have the following result due to
\cite{gautschi1963inverses}.

\begin{theorem}[Gautschi, \cite{gautschi1963inverses}, eqs. (3.10), (3.12)]\label{thm.Gautschi}
  For $z_1,\ldots,z_d \in \C$ pairwise distinct, put
  $$U^{-1}_{2d}(z_1,\ldots,z_d)=\begin{bmatrix} A\\B\end{bmatrix},$$
  where $A,B$ are $d\times 2d$. Then we have the following upper
  bounds on the 1-norm of the rows of the blocks $A,B$
  \begin{align}
    \sum_{k=1}^{2d} |A_{j,k}| &\le (1+2(1+|z_j|)|\Delta_j|)\Gamma_j,\tab j=1,..,d, \label{eq.row.bound.amp} \\
    \sum_{k=1}^{2d} |B_{j,k}| &\le (1+|z_j|)\Gamma_j,\tab j=1,..,d, \label{eq.row.bound.nodes}
  \end{align}
  where $$\Delta_j = \sum_{\ell=1,\ell\ne j}^d \frac{1}{|z_j - z_{\ell}|},\tab 
  \Gamma_j = \left(\prod_{\ell=1,\ell\ne j}^d \frac{1+|z_{\ell}|}{|z_j-z_{\ell}|}\right)^2 .$$			 
\end{theorem}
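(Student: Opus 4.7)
The plan is to recognize $U_{2d}$ as a confluent Vandermonde matrix whose transpose evaluates a polynomial $p(z)=\sum_{k=0}^{2d-1}\tilde c_k z^k$ together with its derivative at $z_1,\ldots,z_d$: namely $U_{2d}^T\tilde c=(p(z_1),\ldots,p(z_d),p'(z_1),\ldots,p'(z_d))^T$. Inverting, $\tilde c=U_{2d}^{-T}v$, so the rows of $U_{2d}^{-1}$ are precisely the coefficient vectors of the Hermite interpolating basis polynomials $H_j,K_j$ of degree $\leq 2d-1$, characterized by $H_j(z_\ell)=\delta_{j\ell}$, $H_j'(z_\ell)=0$, $K_j(z_\ell)=0$, $K_j'(z_\ell)=\delta_{j\ell}$. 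Matching the column ordering in \eqref{eq.jacobian.factorization} (amplitude columns first, derivative columns second), the $j$-th row of $A$ is the coefficient vector of $H_j$ and the $j$-th row of $B$ is the coefficient vector of $K_j$; thus the target bounds \eqref{eq.row.bound.amp}--\eqref{eq.row.bound.nodes} become bounds on the coefficient $\ell_1$ norm $\|p\|_1:=\sum_k|c_k|$ of $H_j$ and $K_j$ respectively.

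I would then write the Hermite basis in closed form using the Lagrange fundamental polynomials $L_j(z)=\prod_{\ell\neq j}(z-z_\ell)/(z_j-z_\ell)$ of degree $d-1$:
$$K_j(z)=(z-z_j)L_j(z)^2,\qquad H_j(z)=\bigl(1-2(z-z_j)L_j'(z_j)\bigr)L_j(z)^2,$$
verifying the interpolation conditions by direct substitution (using $L_j(z_k)=\delta_{jk}$ and one differentiation).

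The bounds will follow from two elementary facts: (i) submultiplicativity of the coefficient $\ell_1$ norm, $\|pq\|_1\leq\|p\|_1\|q\|_1$, which is an immediate consequence of the triangle inequality applied to the Cauchy convolution; and (ii) $\|z-z_\ell\|_1=1+|z_\ell|$. These give $\|L_j\|_1\leq\prod_{\ell\neq j}(1+|z_\ell|)/|z_j-z_\ell|$ and hence $\|L_j^2\|_1\leq\Gamma_j$, which yields \eqref{eq.row.bound.nodes} at once as $\|K_j\|_1\leq(1+|z_j|)\Gamma_j$. For \eqref{eq.row.bound.amp} I would in addition use logarithmic differentiation and $L_j(z_j)=1$ to obtain $L_j'(z_j)=\sum_{\ell\neq j}(z_j-z_\ell)^{-1}$, whence $|L_j'(z_j)|\leq\Delta_j$ and $\|1-2(z-z_j)L_j'(z_j)\|_1\leq 1+2(1+|z_j|)\Delta_j$, giving $\|H_j\|_1\leq(1+2(1+|z_j|)\Delta_j)\Gamma_j$. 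There is no substantive obstacle: once the Hermite-interpolation identification of the rows of $U_{2d}^{-1}$ is in hand, everything reduces to a mechanical coefficient-$\ell_1$ calculation. The only care needed is in correctly tracking the block-row-to-column correspondence dictated by \eqref{eq.jacobian.factorization}.
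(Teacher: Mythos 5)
Your proposal is correct and essentially reconstructs Gautschi's original argument, which the paper simply cites (Theorem \ref{thm.Gautschi} is taken from \cite{gautschi1963inverses} without a self-contained proof). The identification of the rows of $U_{2d}^{-1}$ with the coefficient vectors of the Hermite cardinal polynomials $H_j$ and $K_j$, the closed forms $K_j=(z-z_j)L_j^2$ and $H_j=(1-2(z-z_j)L_j'(z_j))L_j^2$, the logarithmic-differentiation identity $L_j'(z_j)=\sum_{\ell\neq j}(z_j-z_\ell)^{-1}$, and the reduction to submultiplicativity of the coefficient $\ell_1$ norm together with $\|z-z_\ell\|_1=1+|z_\ell|$ are precisely the ingredients of the standard proof, and all the bookkeeping (block/row ordering induced by the factorization \eqref{eq.jacobian.factorization}) is handled correctly. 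The only cosmetic remark is that $\Delta_j\ge 0$ by definition, so the absolute value signs around it in \eqref{eq.row.bound.amp} are redundant, but this does not affect anything.
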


\begin{proof}[Proof of Proposition \ref{prop.uniform.jacobian.bounds}]
  
  By the factorization \eqref{eq.jacobian.factorization}
  $$J_\lambda(F)=U_{2d}(z_1,\ldots,z_d)
  \begin{bmatrix}
    I_d & 0\\
    0 & D
  \end{bmatrix},
  $$
  where $z_1=e^{2\pi i \lambda x_1},\ldots,\allowbreak z_d=e^{2\pi i \lambda x_d}$ and 
  $D=D(z_1,\ldots,z_d)$ is the $d \times d$ diagonal matrix,
  $D_{j,j}=a_j 2\pi i\lambda z_j,\ j=1,\ldots,d$.

  By assumption, the mapped nodes $\{z_j\}$ are pairwise distinct, and
  so it immediately follows that $J_{\lambda}(F)$ is non-degenerate.

  \smallskip 

  Put $U^{-1}_{2d}=U^{-1}_{2d}(z_1,\ldots,z_d)=\begin{bmatrix} A\\B\end{bmatrix}$, where $A,B$ are $d\times 2d$.
  Put $\tilde{B}=D^{-1}B$. Then
  \begin{equation}\label{eq.inv.jacobian}
    J_\lambda^{-1}(F)= \begin{bmatrix} A\\ \tilde{B}\end{bmatrix}.  		
  \end{equation} 
  
  \smallskip
  
  By Theorem \ref{thm.Gautschi}
  \begin{align}
    \sum_{k=1}^{2d} |A_{j,k}| &\le (1+2(1+|z_j|)|\Delta_j|)\Gamma_j,& j=1,..,d, \label{eq.row.bound.amp.2} \\
    \sum_{k=1}^{2d} |B_{j,k}| &\le (1+|z_j|)\Gamma_j,& j=1,..,d, \label{eq.row.bound.nodes.2}
  \end{align}
  where
  $$\Delta_j = \sum_{\ell=1,\ell\ne j}^d \frac{1}{|z_j -
    z_{\ell}|},\tab \Gamma_j = \left(\prod_{\ell=1,\ell\ne j}^d
    \frac{1+|z_{\ell}|}{|z_j-z_{\ell}|}\right)^2 .$$
  
  \smallskip
  
  \begin{itemize}
  \item \underline{Non-cluster node} Let $\ell$ be such that
    $x_{\ell} \in \xv\setminus\xv^c$.
  % We now bound the norm of each row of $J_\lambda^{-1}(F)$ at an index corresponding to a non-cluster node.
  
  \smallskip 
  
  By assumptions we have
  \begin{align*}
    |z_{\ell}-z_j| & \ge \tilde{\eta},& \forall x_{\ell} \in \xv\setminus\xv^c, x_j\in \xv, \ell\ne j.
  \end{align*}
  Then we  obtain 
  \begin{equation}\label{eq.delta.non.cluster.Delta}
    \Delta_{\ell} = \sum_{j=1,j\ne \ell}^d \frac{1}{|z_{\ell} - z_j|} 
    \le \frac{d-1}{
      \tilde{\eta}}=K_5(\tilde{\eta},d),
  \end{equation}
  while
  \begin{align}\label{eq.gamma.non.cluster.Gamma}
    \begin{split}
      \Gamma_{\ell} = \left(\prod_{j=1,j\ne \ell}^d \frac{1+|z_j|}{|z_{\ell}-z_j|}\right)^2 \le & 
      \left(3^{d-1} \prod_{j=1,j\ne \ell}^d
        \frac{1}{|z_{\ell}-z_j|}\right)^2\\ 
      \le & 
      \left(3^{d-1}\frac{\tilde{\eta}^{-d+1}}{\left(\lfloor\frac{d-p}{2}\rfloor!\right)^2}\right)^2
      \\
      = & \left(\left(\frac{3}{\tilde{\eta}}\right)^{d-1}\frac{1}{\left(\lfloor\frac{d-p}{2}\rfloor!\right)^2}\right)^2\\
      = & K_{6}(\tilde{\eta},d,p).
    \end{split}
  \end{align}
  
  \smallskip
  
  Inserting equations \eqref{eq.delta.non.cluster.Delta} and
  \eqref{eq.gamma.non.cluster.Gamma} into \eqref{eq.row.bound.amp.2} and \eqref{eq.row.bound.nodes.2}, we get
  \begin{align}\label{eq.bound.amp.non.cluster}
    \sum_{k=1}^{2d} |A_{\ell,k}| \le
    (1+2(1+|z_{\ell}|)|\Delta_{\ell}|)\Gamma_{\ell} \le (1+6K_5)K_6=K_1(\tilde{\eta},d,p), 
  \end{align}
  and
  \begin{align}\label{eq.bound.nodes.non.cluster}
    \sum_{k=1}^{2d} |B_{\ell,k}| \le
    (1+|z_{\ell}|)\Gamma_{\ell} \le 3K_6=K_7(\tilde{\eta},d,p), 
  \end{align}
  for each $\ell$ such that $x_{\ell} \in \xv\setminus \xv^c$.

  \smallskip
  
  Now we are ready to bound the norms of rows of the blocks $A,\tilde{B}$
  for each non-cluster node index. 
  
  For the block $A$, such bound is given in equation \eqref{eq.bound.amp.non.cluster}.
  
  For the block $\tilde{B}$, we have, using equation \ref{eq.bound.nodes.non.cluster},
  \begin{align}
    \sum_{k=1}^{2d} |\tilde{B}_{\ell,k}| = \sum_{k=1}^{2d} |(a_{\ell} 2\pi i\lambda z_l)^{-1}||B_{\ell,k}| \le
    \frac{2 K_7}{\pi m} \frac{1}{\lambda}=K_{2}(m,\tilde{\eta},d,p)\frac{1}{\lambda}, 
  \end{align}						 	 
  for each $\ell$ such that $x_{\ell} \in \xv\setminus \xv^c$.

  This completes the proof of equations \eqref{eq.prop.1} and \eqref{eq.prop.2} of Proposition
  \ref{prop.uniform.jacobian.bounds}.
  
  \smallskip
    \item \underline{Cluster node}

  We now bound the norm of each row of $J_\lambda^{-1}(F)$ at an index corresponding to a cluster node.
  
  \smallskip 
  
  By assumptions 
  \begin{align*}
    |z_{j}-z_{k}| & \ge \tilde{h} ,& \forall x_j,x_k, \in \xv^{c}, j\ne k,\\
    |z_j-z_{\ell}| &\ge \tilde{\eta},
                                   & \forall x_j \in \xv^{c}, x_{\ell} \in \xv\setminus \xv^c. 
  \end{align*}
  Then for each $j$ such that $x_j \in \xv^c$
  \begin{equation}\label{eq.delta.cluster.1}
    \Delta_j = \sum_{\ell=1,\ell\ne j}^d \frac{1}{|z_j - z_{\ell}|} 
    \le \frac{d-1}{\tilde{h}},
  \end{equation}
  while
  \begin{align}\label{eq.gamma.cluster.1}
    \begin{split}
      \Gamma_j = \left(\prod_{\ell=1,\ell\ne j}^d \frac{1+|z_{\ell}|}{|z_j-z_{\ell}|}\right)^2 \le & \left(
        3^{d-1}\prod_{\ell=1,\ell\ne j}^d
        \frac{1}{|z_j-z_{\ell}|}\right)^2\\ 
      \le &
      \left(3^{d-1} \frac{\tilde{\eta}^{-d+p}
          \tilde{h}^{-p+1}}
        {\left(\lfloor\frac{d-p}{2}\rfloor!\right)^2}\right)^2
      \\
      = & K_{8}(\tilde{\eta},d,p)\tilde{h}^{-2p+2},
    \end{split}
  \end{align}
  where $K_{8}(\tilde{\eta},d,p)= \left(3^{d-1} \frac{\tilde{\eta}^{-d+p}}
    {\left(\lfloor\frac{d-p}{2}\rfloor!\right)^2}\right)^2$.
  
  \smallskip
  
  Inserting equations \eqref{eq.delta.cluster.1} and
  \eqref{eq.gamma.cluster.1} into \eqref{eq.row.bound.amp.2} and \eqref{eq.row.bound.nodes.2}, we get
  \begin{align}\label{eq.bound.amp.cluster}
    \begin{split}
      \sum_{k=1}^{2d} |A_{j,k}| \le
      (1+2(1+|z_j|)|\Delta_j|)\Gamma_j & \le 7(d-1)K_8\tilde{h}^{-2p+1}\\
      &=K_{3}(\tilde{\eta},d,p)\tilde{h}^{-2p+1},
    \end{split}
  \end{align}
  \begin{align}\label{eq.bound.nodes.cluster}
    \begin{split}
      \sum_{k=1}^{2d} |B_{j,k}| \le
      (1+|z_j|)\Gamma_j & \le 3K_8\tilde{h}^{-2p+2}\\
      & =K_{9}(\tilde{\eta},d,p)\tilde{h}^{-2p+2},
    \end{split}
  \end{align}
  for each $j$ such that $x_j \in \xv^c$.
  
  \smallskip
  
  We now bound the norms of rows of the blocks $A,\tilde{B}$
  for each cluster node index. 
  
  For the block $A$, the bound was given in equation \eqref{eq.bound.amp.cluster}.
  
  For the block $\tilde{B}$, we have, using equation \ref{eq.bound.nodes.cluster},
  \begin{align}
    \begin{split}
      \sum_{k=1}^{2d} |\tilde{B}_{j,k}| = \sum_{k=1}^{2d} |(a_j 2\pi i\lambda z_j)^{-1}||B_{j,k}| & \le
      \frac{2K_{9}}{\pi m}\frac{1}{\lambda}\tilde{h}^{-2p+2}\\
      & =K_{4}(\tilde{\eta},d,p,m)\frac{1}{\lambda} \tilde{h}^{-2p+2},
    \end{split}
  \end{align}						 	 
  for each $j$ such that $x_j \in \xv^c$.
  
  \smallskip
  
  This completes the proof of equations \eqref{eq.prop.3} and \eqref{eq.prop.4} of Proposition
  \ref{prop.uniform.jacobian.bounds}.
    \end{itemize}

\end{proof}

\section{Proof of Proposition \ref{prop.one.to.one}}\label{appendix.proof.one.to.one}

\begin{proof}
  
  Let the map $g=g_{\lambda}: \barP \simeq \C^{2d} \rightarrow \C^{2d}$ be defined as 
  \begin{align}\label{eq.def.Gg}
    g_k(a_1,\ldots,a_d,x_1,\ldots,x_d) &= a_k,& k=1,\ldots,d,\\
    g_{d+k}(a_1,\ldots,a_d,x_1,\ldots,x_d) &= e^{2\pi i \lambda x_{k}},& k=1,\ldots,d.\nonumber					
  \end{align}
  
  Consider the definition of the Prony map $PM$ from
  \eqref{eq.def.PM}. We thus have
  \begin{equation}\label{eq.FM.Lambda.0.composition}
    FM_{\lambda} = PM \circ g_{\lambda}.				
  \end{equation}

  Put
  $$W=g_{\lambda}(H^{\mathrm{o}}_{m,\frac{\tau h}{2 \pi}}(F)) = g_{\lambda}(U).$$ 			
  We will show that $g_\lambda$ is injective on $U$ and that $PM$ is injective on $W$. 
  \smallskip
  
  First we show that $PM$ is injective on $W$.

  Proposition \ref{prop.unique.solution.prony} gives sufficient conditions 
  for $PM$ to be one to one on a subset of $\C^{2d}$,
  the next Proposition asserts that these conditions hold for $W$.
  
  \begin{proposition}\label{prop.geo.local.W}
    Let $\lambda \in \Lambda(\xv)$. Then for each
    $\vec{v}',\vec{v}'' \in W =
    g_{\lambda}(H^{\mathrm{o}}_{m,\frac{\tau h}{2 \pi}}(F))\allowbreak
    =g_{\lambda}(U)$, with $\vec{v}'=(\av',\vec{z}')$,
    $\av'=(a'_1,\allowbreak \ldots,a'_d)$, $\vec{z}'=(z'_1,\ldots,z'_d)$,
    $\vec{v}''=(\av'',\vec{z}'')$, $\av''=(a''_1,\ldots,a''_d)$,
    $\vec{z}''=(z''_1,\allowbreak\ldots,z''_d)$, and $\vec{v}'\ne \vec{v}''$, it holds
    that:
    \begin{enumerate} 
    \item\label{it.geo.1} $a'_j \ne 0$ for $j= 1,\ldots,d.$ 
    \item\label{it.geo.2} $z'_j \ne z'_k$ for each $1 \le j<k \le d$. 
    \item\label{it.geo.3} $z'_j \ne z''_k$ for all $1 \le j<k \le d$.
    \end{enumerate} 
  \end{proposition}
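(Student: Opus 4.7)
The plan is to verify the three items of Proposition \ref{prop.geo.local.W} in turn, using two ingredients: (i) the explicit size of the polydisc $U = H^{\mathrm{o}}_{m,\tau h/(2\pi)}(F)$, which controls $|a'_j - a_j|$ and $|x'_j - x_j|$; and (ii) admissibility of $\lambda \in \Lambda(\xv)$, which via Definition \ref{def.good.blowup.set} and Lemma \ref{lem.ang.dist.eucl.} supplies Euclidean lower bounds on the pairwise distances between the unperturbed mapped nodes $z_j = e^{2\pi i \lambda x_j}$.

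For item (1), each coordinate of $\av'$ satisfies $|a'_j - a_j| < m$; under the per-coordinate reading of the amplitude assumption $|a_j| \ge m$ (which is the reading actually needed throughout Section~\ref{sec.error.sets}, cf.\ Proposition \ref{prop.uniform.jacobian.bounds.clustered.signal}), the reverse triangle inequality immediately yields $|a'_j| > 0$. For item (2), fix a pair $j < k$. The polydisc bound $|x'_j - x_j| < \tau h/(2\pi)$, together with $\lambda \le \Omega/(2d-1)$ and the elementary estimate of $|e^{2\pi i \lambda w} - 1|$ for small complex $w$, gives $|z'_j - z_j| < \lambda \tau h$; the exponential correction from complex $w$ is absorbed because $\Omega h \le 1/(20d)$ keeps $2\pi \lambda |w|$ uniformly tiny. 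I then split into two cases. If $x_j, x_k \in \xv^c$, Definition \ref{def.good.blowup.set} combined with Lemma \ref{lem.ang.dist.eucl.} gives $|z_j - z_k| \ge 4 \lambda \tau h$, and the reverse triangle inequality produces $|z'_j - z'_k| > 2\lambda \tau h > 0$. Otherwise at least one of $x_j, x_k$ is a non-cluster node and the same tools give $|z_j - z_k| \ge 2/(\pi d^2)$, while the numerical inequality $2\lambda \tau h \le 1/(10 d(2d-1)) < 2/(\pi d^2)$ (immediate from $\Omega h \le 1/(20d)$) leaves a strictly positive margin.

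Item (3) will then be handled by exactly the same case analysis applied to $|z'_j - z''_k|$ for $j \ne k$: the perturbation bounds now read $|z'_j - z_j| < \lambda\tau h$ and $|z''_k - z_k| < \lambda\tau h$, while the unperturbed lower bounds on $|z_j - z_k|$ coming from $\Lambda(\xv)$ and Lemma \ref{lem.ang.dist.eucl.} are unchanged, so the same arithmetic gives strict positivity. The apparent restriction to $j < k$ in the statement is upgraded to all $j \ne k$ by swapping the roles of $\vec{v}'$ and $\vec{v}''$, which effectively takes care of the opposite orientation and therefore suffices (in the ambient proof of Proposition~\ref{prop.one.to.one}) to exclude the non-trivial permutations of Proposition~\ref{prop.unique.solution.prony}. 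I expect the only real ``obstacle'' here to be arithmetic bookkeeping: the polydisc radius $\tau h/(2\pi)$, the Lipschitz factor $2\pi \lambda$, the separation bounds from $\Lambda(\xv)$, and the normalization $\Omega h \le 1/(20d)$ all fit together so as to leave a positive remainder after the reverse triangle inequality, and the normalization $\Omega h \le 1/(20d)$ has evidently been calibrated exactly for this purpose; no deeper idea appears to be required.
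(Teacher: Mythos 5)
Your proof is correct and takes essentially the same route as the paper's: a Lipschitz/mean-value bound on $|z'_j - z_j|$ coming from the polydisc radius, a lower bound on the pairwise distances $|z_j - z_k|$ from $\lambda \in \Lambda(\xv)$ via Lemma \ref{lem.ang.dist.eucl.}, and a triangle-inequality comparison (the paper handles all pairs uniformly with $|z_j-z_k|\ge 4\lambda\tau h$, while you split into cluster/cluster and mixed cases, which is only a cosmetic difference). One small bookkeeping caveat: because the polydisc lives in $\barP$ and the nodes may move into the complex plane, the paper's mean-value estimate yields $|z'_j - z_j| < \lambda\tau h\, e^{\lambda h} < 2\lambda\tau h$ rather than $<\lambda\tau h$, so in the cluster/cluster case the clean margin of $2\lambda\tau h$ you claim is not available and one should instead conclude $z'_j\ne z'_k$ by contradiction from the strictness of the polydisc inequality; the proposition nonetheless follows unchanged.
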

  \begin{proof}
    Let $\lambda \in \Lambda(\xv)$ and let $\vec{v}',\vec{v}'' \in g_{\lambda}(H^{\mathrm{o}}_{m,\frac{\tau h}{2 \pi}}(F))$
    as specified in Proposition \ref{prop.geo.local.W}.
    
    The first assertion is apparent from the fact that
    $\|\av'-\av\|<m$ and the assumption that $|a_j|\ge m $ for
    $j=1,\ldots,d$.
    
    \smallskip
    
    We now prove assertions \ref{it.geo.2} and \ref{it.geo.3}.
    
    \smallskip
    
    Let $\vec{z}=(z_1,\ldots,\allowbreak z_d)$, with $z_1=e^{2\pi i \lambda x_1},\ldots,\allowbreak 
    z_{d}=e^{2\pi i \lambda x_d }$. 
    
    \smallskip
    
    As a first step we argue that for each pair of mapped nodes $z_j, z_k, 1 \le j < k \le d$,
    \begin{align}\label{eq.geo.local.equclidian.dist}
      |z_{j}-z_{k}| & \ge 4\lambda \tau h,
      & 1 \le j < k \le d.
    \end{align}
    
    Indeed with the assumption that $\Omega h \le \frac{1}{20d}$ we have that
    \begin{equation}\label{eq.geo.local.angles}
      \frac{\pi}{2} > \frac{1}{d^2} > 2\pi \lambda \tau h.
    \end{equation}
    By \eqref{eq.geo.local.angles} and since $\lambda \in \Lambda(\xv)$
    \begin{equation}\label{eq.geo.local.angles.2}
      \angle(z_j,z_k) \ge 2\pi \lambda \tau h.					
    \end{equation}
    Then by \eqref{eq.geo.local.angles}, \eqref{eq.geo.local.angles.2} and \eqref{eq.angle.to.euclidien}
    \begin{align*}
      |z_{j}-z_{k}| & \ge 4\lambda \tau h.
    \end{align*} 
    
    Next we claim that 
    \begin{equation}\label{eq.local.geo.W}
      W \subset H^{\mathrm{o}}_{m,2\lambda\tau h}(\av,\vec{z}) = 
      \left\{ (\av',\vec{z}') \in \C^{2d} \ :
        \|\av'-\av\| < m,\ \|\vec{z}'-\vec{z}\| < 2\lambda\tau h \right\}.
    \end{equation}
    Let $(\av''',\xv''') \in H^{\mathrm{o}}_{m,\frac{\tau h}{2 \pi}}(F)$. To show \eqref{eq.local.geo.W}, 
    we need to verify that $g_\lambda(\av''',\xv''') \in H^{\mathrm{o}}_{m,2\lambda\tau h}(\av,\vec{z})$.	 
    For this purpose put $g_{\lambda}(\av''',\xv''')=(\av''',\vec{z}''')$, 
    $\vec{z}'''=(e^{2\pi i \lambda x'''_1},\ldots, e^{2\pi i \lambda x'''_d})$. Then using 
    the integral mean value bound, for any $j=1,\ldots,d$,	
    \begin{align*}
      \left|e^{2\pi i \lambda x'''_j} - e^{2\pi i \lambda x_j}\right| & \le \max_{c \in \{ x_j + t(x_j'''-x_j) : t \in [0,1]\}}
                                                                        \left|\frac{d}{dx}e^{2\pi i \lambda x}\Big|_c\right| \frac{\tau h}{2 \pi}\\
                                                                      & \le \lambda\tau h e^{\lambda h }\\
                                                                      & < 2\lambda\tau h,
    \end{align*}
    where in the last step we used the assumption $\Omega h \le \frac{1}{20d}$ and the fact that $\lambda \le \frac{\Omega}{2d-1}$,
    which then implies that $e^{\lambda h } < 2$. This in turn proves \eqref{eq.local.geo.W}.
    
    \smallskip 
    
    We now prove assertion \ref{it.geo.2}. 
    
    Let $1 \le j<k \le d$ and 
    assume by contradiction that $z'_j=z'_k$. 
    By \eqref{eq.local.geo.W}, $(\av',\vec{z}') \in H^{\mathrm{o}}_{m,2\lambda\tau h }(\av,\vec{z})$ then
    $|z_j-z'_j|<2\lambda\tau h $ and $|z_k-z'_j|=|z_k-z'_k|<2\lambda\tau h $.
    Then
    $$|z_j-z_k| \le |z_j-z'_j| + |z_k-z'_j| < 4\lambda\tau h ,$$
    which is a contradiction to \eqref{eq.geo.local.equclidian.dist}. 
    
    \smallskip 
    
    Finally we prove assertion \ref{it.geo.3}.
    
    Assume by contradiction that for $1 \le j<k \le d$, $z'_j = z''_k$. 
    By \eqref{eq.local.geo.W} $|z_j-z'_j| < 2\lambda\tau h$. By assumption $|z_k-z_j'| = |z_k-z_k''|$ then
    by \eqref{eq.local.geo.W} $|z_k-z_j'| < 2\lambda\tau h$. Using these 
    $$|z_j-z_k| \le |z_j-z'_j| + |z_k-z_j'| < 4\lambda\tau h,$$
    which is a contradiction to \eqref{eq.geo.local.equclidian.dist}.
    
    This completes the proof of Proposition \ref{prop.geo.local.W}.
  \end{proof}
  Now by Propositions \ref{prop.geo.local.W} and \ref{prop.unique.solution.prony} we have that $PM$ is 
  injective on $W$.
  
  \smallskip
  
  We now show that $g_\lambda$ is injective on $U$.
  
  \begin{proposition}\label{prop.g.injective}
    For each $\lambda >0 $, the map $g_{\lambda}$ is injective in the polydisc
    $H^{\mathrm{o}}_{m,\frac{1}{2\lambda}}(F)$.
  \end{proposition}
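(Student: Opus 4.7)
The plan is to exploit the product structure of $g_\lambda$ directly. By definition \eqref{eq.def.Gg}, the map acts as the identity on the amplitude coordinates and componentwise through the scalar complex exponential $x \mapsto e^{2\pi i \lambda x}$ on the node coordinates. So the proof decouples into two independent checks, and essentially the whole content of the proposition is that the radius $\tfrac{1}{2\lambda}$ in the node direction was chosen as exactly half the fundamental period $1/\lambda$ of this exponential, which is precisely what is needed to guarantee local injectivity.

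First I would dispose of the amplitudes: suppose $(\av',\xv'),(\av'',\xv'') \in H^{\mathrm{o}}_{m,\frac{1}{2\lambda}}(F)$ satisfy $g_\lambda(\av',\xv') = g_\lambda(\av'',\xv'')$. Comparing the first $d$ coordinates on each side immediately gives $\av' = \av''$, so no further analysis of the amplitude block is needed.

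Next I would handle the nodes. For each $k = 1,\ldots,d$, the $(d+k)$-th coordinate of the equality reads $e^{2\pi i \lambda x'_k} = e^{2\pi i \lambda x''_k}$. Using the standard fact that $e^z = e^w$ for $z,w \in \C$ holds if and only if $z - w \in 2\pi i \Z$, this is equivalent to $\lambda(x'_k - x''_k) \in \Z$, i.e.\ $x'_k - x''_k = n_k/\lambda$ for some integer $n_k$. The decisive step is then the triangle inequality: since both $x'_k$ and $x''_k$ lie strictly within distance $\tfrac{1}{2\lambda}$ of $x_k$,
\[
|x'_k - x''_k| \le |x'_k - x_k| + |x''_k - x_k| < \frac{1}{\lambda},
\]
which combined with $x'_k - x''_k = n_k/\lambda$ and $n_k \in \Z$ forces $n_k = 0$, hence $x'_k = x''_k$. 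Combined with $\av' = \av''$ this yields $(\av',\xv') = (\av'',\xv'')$.

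There is no real obstacle here; the argument is essentially a one-liner once the $2\pi i$-periodicity of $\exp$ is made explicit. The only subtlety worth remarking on is that $\xv$ lies in $\C^d$ rather than $\R^d$ (recall the extended space $\bar{\P}_d$), so the periodicity condition $z - w \in 2\pi i \Z$ must be applied in its complex form, but the same triangle-inequality bookkeeping still yields $n_k = 0$ because the discs are open of radius $\tfrac{1}{2\lambda}$.
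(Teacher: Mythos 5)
Your proof is correct and follows essentially the same route as the paper's: both decouple the amplitude and node coordinates, and both use the $1/\lambda$-periodicity of $x\mapsto e^{2\pi i\lambda x}$ together with the triangle inequality (valid in the complex norm) to force the integer shift $n_k$ to vanish inside the open polydisc of radius $\tfrac{1}{2\lambda}$. The paper phrases the node step by writing out the full preimage set $S_j$ explicitly, while you invoke $e^z=e^w\iff z-w\in 2\pi i\Z$ directly, but the content is identical.
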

  \begin{proof}
    Let $(\av',\xv'),(\av'',\xv'') \in H^{\mathrm{o}}_{m,\frac{1}{2\lambda}}(F)$ such that 
    $g(\av'',\xv'')\allowbreak=g(\av',\xv')$. We will show that $(\av',\xv')=(\av'',\xv'')$. 
    
    For the amplitudes coordinates $k=1,\ldots,d$,
    $g_k(a_1,\ldots,a_d,x_1,\ldots,x_d)=a_k$ therefore $\av''=\av'$.
    
    For coordinates $d+1,\ldots,2d$, 
    $$g_{d+j}(a_1,\ldots,a_d,x_1,\ldots,x_d)= g_{d+j}(x_{j}) = e^{2\pi i \lambda x_j}, \tab j=1,\ldots,d.$$
    Fix a certain $1 \le j \le d$ and set $x'_j = \alpha'_j + \beta'_j i$, $\alpha'_j,\beta'_j \in \R$. 
    The set of complex numbers $w=\alpha + \beta i$ 
    such that $g_{d+j}(w)= g_{d+j}(x'_{j}) = e^{2\pi i \lambda x'_{j}}$ is equal to 
    $$S_j = \left\{\alpha + \beta i : \beta = \beta'_j,\ \alpha = \alpha'_j + \frac{\ell}{\lambda}, \ \forall \ell\in{\mathbb Z} \right\}.$$
    Since $(\av',\xv'),(\av'',\xv'') \in H^{\mathrm{o}}_{m,\frac{1}{2\lambda}}(F)$ implies that $|x'_j-x_j''| <
    \frac{1}{\lambda}$ then $x_j''=x_j'$ and because $j$ was chosen arbitrarily we have $\xv''=\xv'$.
  \end{proof} 
  
  By assumption $\lambda \le \frac{\Omega}{2d-1}$ and $\Omega h \le \frac{1}{20 d}$ then $\frac{1}{\lambda} > h$.
  Using the former, $U = H^{\mathrm{o}}_{m,\frac{\tau h}{2 \pi}}(F) \subset H^{\mathrm{o}}_{m,\frac{1}{2\lambda}}(F) $
  then by Proposition \ref{prop.g.injective} $g_\lambda$ is injective on $U$.
  
  We have shown that $g_\lambda$ is injective on $U$ and that $PM$ is injective on $W = g_\lambda(U)$ then by 
  \eqref{eq.FM.Lambda.0.composition} $\FMG$ is injective on $U$. 
  
  This completes the proof of Proposition \ref{prop.one.to.one}. 
\end{proof}

\section{Proof of Proposition \ref{prop.global.geo}}\label{appendix.global.geo}
\begin{proof}
  First observe that if $F' \in \P_d$ is of the form $F'=(\av'^{\pi},\xv'^{\pi}) + \frac{1}{\lambda}\lv$, with 
  $\pi \in \Pi_d$ and $\lv \in \Z^{d}$, and $(\av',\xv')\in A_{\epsilon,\lambda}(F)$ then
  \begin{align*}
    \FMG(F')&=\FMG\left(\left(\av'^{\pi},\xv'^{\pi}+\frac{1}{\lambda}\lv\right)\right)\\
            &=\sum_{j=1}^d \av'_{\pi(j)}e^{2\pi i \lambda (\xv_{\pi(j)}'
              +\frac{\lv_j}{\lambda})}\\
            &=\sum_{j=1}^d \av'_{\pi(j)}e^{2\pi i \lambda \xv_{\pi(j)}'}\\
            &=\sum_{j=1}^d \av'_{j}e^{2\pi i \lambda \xv_{j}'}\\
            &=\FMG\left((\av',\xv')\right).
  \end{align*}	
  Since by definition of $A_{\epsilon,\lambda}(F)$ (see equation
  \eqref{eq.def.AG} ), $(\av',\xv') \in A_{\epsilon,\lambda}(F)$ implies
  that $(\av',\xv') \in \EG$, then the above shows that
  $$\EG \supseteq \left(\bigcup_{\pi \in \Pi_d} \bigcup_{\lv \in \Z^d} A_{\epsilon,\lambda}^{\pi}(F)+\frac{1}{\lambda}\lv
  \right)\bigcap \P_d.$$
  
  \smallskip
  
  For the other direction, let $F'=(\av',\vec{y}') \in \EG$ with $\av'=(a'_1,\ldots,a'_d)$
  and $\vec{y}'=(y'_1,\ldots,y'_d)$. Put $\vec{\mu}' = \FMG(F')$, then
  $\vec{\mu}' \in Q_{\epsilon}(\vec{\mu}_{\lambda})$ (with $\vec{\mu}_{\lambda} = \FMG(F)$ as above). 
  
  By definition of the set $A_{\epsilon,\lambda}(F)$, there exists a signal $F'' \in A_{\epsilon,\lambda}(F)$ such that 
  $\FMG(F'') = \vec{\mu}'$, and put $F''=(\av'',\xv'')$ with $\av''=(a''_1,\ldots,a''_d)$
  and $\xv''=(x''_1,\allowbreak \ldots,x''_d)$.
  
  Recall that by \eqref{eq.FM.Lambda.0.composition} (see
  \eqref{eq.def.PM} and \eqref{eq.def.Gg})
  $$\FMG = PM \circ g_\lambda.$$
  Put $g_\lambda(F'')=(\av'',\vec{z}'')$ with
  $\vec{z}''=(z''_1,\ldots,z''_d)$, $z_j'' = e^{2\pi i \lambda x''_j}$
  for $j=1,\ldots,d$. By Proposition \ref{prop.geo.local.W} each point
  in $W = g_\lambda(U)$ has non-vanishing amplitudes and pairwise
  distinct nodes.  We have that
  $F'' \in A_{\epsilon,\lambda}(F) \subseteq U$ and hence
  $(\av'',\vec{z}'')$ satisfies the above properties.  Then by
  Proposition \ref{prop.unique.solution.prony} the set of all
  solutions to the equation $PM\left((\av,\vec{z})\right)=\vec{\mu}'$
  is given by
  \begin{equation}\label{eq.permutations.solutions}
    \left\{ (\av''^{\pi},\vec{z}''^{\pi}) : \pi \in \Pi_d \right\}.		 	
  \end{equation}
  By \eqref{eq.permutations.solutions} there exists $\pi \in \Pi_d$ 
  such that 
  $$g_\lambda(F')=g_\lambda\left((\av',\vec{y}')\right)=(\av''^{\pi},\vec{z}''^{\pi}).$$ 
  
  Finally since $x''_1,\ldots,x''_d$ are real, the set of all
  solutions to the equation
  $g_\lambda\left((\av,\xv)\right)=(\av''^{\pi},\vec{z}''^{\pi})$ is
  given by
  $$\left\{(\av''^{\pi},\xv''^{\pi}+\frac{1}{\lambda}\lv) : \lv\in \Z^d\right\}.$$
  By the above, $F'$ is of the form
  $\left(\av''^{\pi},\xv''^{\pi}+\frac{1}{\lambda}\lv\right)$ for some
  $\pi \in \Pi_d$ and $\lv \in \Z^d$.
  
  This concludes the proof of Proposition \ref{prop.global.geo}.		 	
\end{proof} 

\section{Proof of Proposition \ref{prop.elimiate.period}}\label{appendix.upper.bound.tech}
  Within the course of the proof we will make appropriate assumptions of the form
  $\frac{C'}{\eta} \le \Omega \le \frac{C''}{h}$, with $C',C''$ being constants depending only on $d$, 
  for which some arguments of the proof hold. It is to be understood that 
  $K_9$ is the maximum of the constants $C'$ and $K_{10}$ is the minimum of
  the constants $C''$.  

Assume that $\Omega \ge \frac{2(2d-1)}{\eta}$. Then 
the length of the interval 
$\left[\frac{1}{2}\frac{\Omega}{2d-1},\frac{\Omega}{2d-1}\right]$ 
is larger than $\frac{1}{\eta}$ and by Proposition \ref{prop.good.blowup} there exists 
an interval $I \subseteq
\left[\frac{1}{2}\frac{\Omega}{2d-1},\frac{1}{2}\frac{\Omega}{2d-1}+\frac{1}{\eta}\right]$
such that
\begin{equation}\label{eq.def.lambda.1}
  I\subset \Lambda(\xv), \tab |I| = (2d^2 \eta)^{-1}.
\end{equation} 
Fix 
$$I_1 = [\lambda_1,\lambda_1+(2d^2\eta)^{-1}] \subseteq \Lambda(\xv) \cap
\left[\frac{1}{2}\frac{\Omega}{2d-1},\frac{1}{2}\frac{\Omega}{2d-1}+\frac{1}{\eta}\right]
$$
to be the sub-interval of $\Lambda(\xv) \cap
\left[\frac{1}{2}\frac{\Omega}{2d-1},\frac{1}{2}\frac{\Omega}{2d-1}+\frac{1}{\eta}\right]$ 
with the minimal starting point $\lambda_1$ which satisfies \eqref{eq.def.lambda.1}.
We will show that there exists $\lambda \in I_1$ that satisfies \eqref{eq.what.we.intend.to.show}.	

\smallskip 

We require the following intermediate results.

\smallskip 

As in Section \ref{section.decimation} we denote by $\nu$ the Lebesgue measure on $\R$.
\begin{lemma}\label{lem.real.intervals.1}
  Let $\frac{1}{2} \le a<1$ and $I=[a,1]$. 
  Then for each $\epsilon,\alpha,c \in \R$ such that $0 < \alpha \le 1$,
  $0 < \epsilon \le \frac{1}{100} \alpha$ and $|c| \ge 8 \frac{\epsilon}{\alpha |I|}$, it holds that
  $$ \nu \big( \left\{x\in I : 
    \exists k\in \Z \mbox{ such that } \left| kx - c\right|\le \epsilon \right\}\big) < \alpha |I|.$$
\end{lemma}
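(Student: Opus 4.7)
\textbf{Proof plan for Lemma \ref{lem.real.intervals.1}.} Let $S$ denote the set whose measure we wish to bound, and decompose it as $S = \bigcup_{k \in \Z} S_k$ where $S_k = \{x \in I : |kx - c| \leq \epsilon\}$. The plan is to eliminate the $k=0$ and $k<0$ contributions by sign considerations, then estimate the contribution from positive $k$ in an interval of length roughly $c\cdot|I|$ by a telescoping integral bound. The hypothesis $|c|\ge 8\epsilon/(\alpha|I|)\ge 8\epsilon$ (using $\alpha|I|\le 1$) in particular gives $|c|>\epsilon$, so $S_0=\emptyset$. By symmetry in the sign of $c$ I would take $c>0$. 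For $k<0$ and $x\in I\subseteq[\tfrac12,1]$ one has $kx<0$, so $|kx-c| = -kx+c\ge c>\epsilon$, and hence $S_k=\emptyset$ in that case.

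For $k>0$ the set $S_k$ is $I\cap[(c-\epsilon)/k,(c+\epsilon)/k]$, which is either empty or an interval of length at most $2\epsilon/k$. It is non-empty only when $(c-\epsilon)/k\le 1$ and $(c+\epsilon)/k\ge a$, i.e.\ $k\in[c-\epsilon,(c+\epsilon)/a]$; write $k_1=\lceil c-\epsilon\rceil$ and $k_2=\lfloor(c+\epsilon)/a\rfloor$ for the integer endpoints. By subadditivity,
\begin{equation*}
\nu(S)\le \sum_{k=k_1}^{k_2} \frac{2\epsilon}{k}.
\end{equation*}
I would use the standard integral comparison $\sum_{k=k_1}^{k_2}\frac{1}{k}\le \frac{1}{k_1}+\ln(k_2/k_1)$. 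Since $c\ge 8\epsilon$ gives $k_1\ge c-\epsilon\ge c/2$, one has $1/k_1\le 2/c$. For the log term, $\ln(k_2/k_1)\le \ln\!\bigl(\tfrac{c+\epsilon}{a(c-\epsilon)}\bigr)=\ln(1/a)+\ln\!\bigl(\tfrac{c+\epsilon}{c-\epsilon}\bigr)$, and using $\ln(1+y)\le y$ together with $a\ge 1/2$, this is at most $(1-a)/a+4\epsilon/c\le 2|I|+4\epsilon/c$. Combining,
\begin{equation*}
\nu(S)\le \frac{4\epsilon}{c}+4\epsilon|I|+\frac{8\epsilon^{2}}{c}.
\end{equation*}

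Finally, I would plug in the standing assumption $|c|\ge 8\epsilon/(\alpha|I|)$, which gives $\epsilon/c\le \alpha|I|/8$, so the first summand is at most $\alpha|I|/2$ and the third is at most $\epsilon\alpha|I|\le \alpha^{2}|I|/100$. Using $\epsilon\le\alpha/100$ the middle summand is at most $\alpha|I|/25$. Adding, $\nu(S)\le(\tfrac{1}{2}+\tfrac{1}{25}+\tfrac{1}{100})\,\alpha|I|<\alpha|I|$, proving the lemma.

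The routine part is the geometric decomposition. The main delicate point is that the naive bound ``number of $k$'s $\times$ max length'' already produces a constant of order $1$ times $\alpha|I|$, so one genuinely needs the telescoping/logarithmic estimate (rather than a crude count) to keep the leading constant below $1$; it is the $1/k_1$ endpoint term, combined with the sharpness of the hypothesis $|c|\ge 8\epsilon/(\alpha|I|)$, that produces the dominant $\alpha|I|/2$ and forces the exact numerical constant $8$ in the hypothesis.
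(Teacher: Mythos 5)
Your proof is correct, and the core mechanism is the same as the paper's: reduce to $c>0$ and positive $k$, bound $\nu(S)$ by the sum $\sum_k 2\epsilon/k$ over the finitely many relevant $k$, and estimate that harmonic-type sum by a logarithm plus a leading $1/k_1$ endpoint term, which carries the dominant $\alpha|I|/2$ contribution and explains the constant $8$. The only cosmetic difference is that the paper splits into cases ($0<c<2$, then $N\le 2$ versus $N\ge 3$ with base-$2$ Harmonic-number bounds), whereas your integral comparison handles all $c$ uniformly, which is a mild simplification rather than a different route.
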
	 	
\begin{lemma}\label{lem.real.intervals.2}
  Consider the interval $[a,b] \subset (0,\infty)$ and let $S \subseteq [a,b]$ be a union of $N$ 
  disjoint sub-intervals 
  $S=\bigcup_{i=1}^{N} [a_i,b_i]$.  Set $I^{-1}=[\frac{1}{b},\frac{1}{a}]$ and $S^{-1} = \bigcup_{i=1}^{N} [\frac{1}{b_i},\frac{1}{a_i}]$.
  Then 
  $$\frac{\nu(S)}{\nu(I)} \le \frac{b}{a} \frac{\nu(S^{-1})}{\nu(I^{-1})} .$$ 
\end{lemma}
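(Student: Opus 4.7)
The plan is to exploit the explicit formula for the length of the image of each subinterval under the involution $\phi : x \mapsto 1/x$, and then compare the resulting sum with the exact length of $I^{-1}$.

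First, I would write out the length of $\phi([a_i,b_i]) = [1/b_i, 1/a_i]$ exactly, namely
\[
\nu\bigl([1/b_i,1/a_i]\bigr) \;=\; \frac{1}{a_i} - \frac{1}{b_i} \;=\; \frac{b_i - a_i}{a_i b_i}.
\]
Since the subintervals $[a_i,b_i]$ are disjoint subsets of $[a,b]$, their images under $\phi$ are also disjoint (as $\phi$ is strictly monotone on $(0,\infty)$), so $\nu(S^{-1}) = \sum_i (b_i-a_i)/(a_i b_i)$. The key observation is the crude uniform bound $a_i b_i \le b^2$, valid because $a_i, b_i \in [a,b]$. Summing yields
\[
\nu(S^{-1}) \;\ge\; \frac{1}{b^2}\sum_{i=1}^{N}(b_i - a_i) \;=\; \frac{\nu(S)}{b^{2}}.
\]

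Next, I would compute $\nu(I^{-1}) = 1/a - 1/b = (b-a)/(ab) = \nu(I)/(ab)$ directly. Dividing the previous inequality by this exact identity gives
\[
\frac{\nu(S^{-1})}{\nu(I^{-1})} \;\ge\; \frac{\nu(S)/b^{2}}{\nu(I)/(ab)} \;=\; \frac{a}{b}\,\frac{\nu(S)}{\nu(I)},
\]
which rearranges to the claimed inequality $\nu(S)/\nu(I) \le (b/a)\cdot \nu(S^{-1})/\nu(I^{-1})$.

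There is essentially no obstacle here: the statement is a one-line Jacobian comparison for the map $x \mapsto 1/x$ on the positive real line, where the only nontrivial ingredient is the uniform bound $a_ib_i \le b^2$ used to absorb the varying Jacobian $1/x^2$ into the constant $1/b^2$, while the full-interval length is handled by an exact equality. The asymmetry of the bounds (upper bound $b^2$ for the subintervals, exact formula for $I$) is precisely what produces the factor $b/a$ rather than $b^2/a^2$.
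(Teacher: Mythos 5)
Your proof is correct and uses the same key idea as the paper's: bound $a_i b_i \le b^2$ to control the Jacobian of $x\mapsto 1/x$ on each subinterval, while using the exact identity $\nu(I^{-1})=\nu(I)/(ab)$ for the full interval. The only difference is cosmetic — you sum first and then divide, whereas the paper writes the per-subinterval ratio inequality and then sums.
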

\begin{proposition}\label{prop.small.c}
	There exists constants $K_{11}, K_{12}$ depending only on $d$ such that for $\frac{K_{11}}{\eta}\le \Omega \le
	\frac{K_{12}}{h}$ the following holds. For each $3h < |c| \le \frac{\eta}{6}$, there exists an interval $I \subset
	\Lambda(\xv)$ of length $|I| = (2 d^2 \eta)^{-1}$ such that for all $\lambda \in I $ and for all $k \in \Z$ 
	\begin{equation}\label{eq.small.c}
		\left| c-\frac{k}{\lambda}\right| > 3 h.
	\end{equation}
\end{proposition}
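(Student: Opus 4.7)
The plan is to locate, inside the large interval $I_{\mathrm{big}} = \left[\Omega/(2(2d-1)),\ \Omega/(2d-1)\right]$, a sub-interval $I \subset \Lambda(\xv)$ of length $L = (2d^2\eta)^{-1}$ that also avoids the Diophantine bad set
$$B_c = \left\{\lambda \in I_{\mathrm{big}} : \exists k \in \Z \text{ with } |c\lambda - k| \le 3h\lambda\right\}.$$
I would split the analysis into two cases according to the size of $|c|$, since the admissible range includes values of $|c|$ that are too small for Lemma \ref{lem.real.intervals.1} to bite directly.

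In the small-$|c|$ case $3h < |c| \le (2d-1-3K_{12})/\Omega$, a sufficiently small choice of $K_{12}$ guarantees $c\lambda + 3h\lambda < 1$ for every $\lambda \in I_{\mathrm{big}}$, so only the integer $k=0$ could possibly produce the bad inequality; but $|c-0|=|c|>3h$ rules it out. Hence $B_c$ is empty, and any length-$L$ sub-interval of $\Lambda(\xv) \cap I_{\mathrm{big}}$ supplied by Proposition \ref{prop.good.blowup} works (one such interval exists as soon as $\Omega \ge 2(2d-1)/\eta$).

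For the large-$|c|$ case, I would first bound $\nu(B_c)$ by changing variables to $x = 1/\lambda$ and then normalizing via $y = x\Omega/(2(2d-1))$; this transports $I_{\mathrm{big}}$ onto $[1/2,1]$ and turns the bad condition into $|ky - c'| \le \epsilon'$ with $c' = c\Omega/(2(2d-1))$ and $\epsilon' = 3h\Omega/(2(2d-1))$. With $\alpha$ chosen of order $d^{-2}$, the case hypothesis on $|c|$ together with a small-enough $K_{12}$ secures both assumptions $\epsilon' \le \alpha/100$ and $|c'| \ge 16\epsilon'/\alpha$ of Lemma \ref{lem.real.intervals.1}; the resulting normalized bad-measure bound pulls back through Lemma \ref{lem.real.intervals.2} (endpoint ratio $b/a = 2$) to $\nu(B_c)/|I_{\mathrm{big}}| < 2\alpha$. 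Enumerating the integers $k$ for which the slab $|c\lambda-k|\le 3h\lambda$ meets $I_{\mathrm{big}}$ further shows that $B_c$ consists of at most $N_2 \le c\Omega/(2(2d-1)) + O(1)$ components.

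Finally I would conclude by a counting argument: partition $I_{\mathrm{big}}$ into $M = \lfloor |I_{\mathrm{big}}|\eta \rfloor$ length-$1/\eta$ sub-intervals and, in each, select a candidate length-$L$ interval $I_i^{\ast} \subset \Lambda(\xv)$ from Proposition \ref{prop.good.blowup}. A component of $B_c$ of length $|J|$ meets at most $|J|\eta + 2$ of the partitioning sub-intervals, so at most $\nu(B_c)\eta + 2 N_2$ of the $I_i^{\ast}$ can touch $B_c$. Substituting the above bounds, the number of untouched candidates is $\gtrsim \Omega\eta/(2d-1)$ as soon as $\Omega \ge K_{11}/\eta$ with $K_{11}$ a sufficiently large multiple of $2d-1$, and any untouched $I_i^{\ast}$ serves as the required $I$. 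The main technical obstacle is reconciling the three smallness constraints---the two hypotheses of Lemma \ref{lem.real.intervals.1}, the case-split boundary on $|c|$, and the positivity of the counting margin---which together pin down admissible values of $K_{11}$ and $K_{12}$.
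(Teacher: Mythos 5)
Your proof is correct and takes a genuinely different route from the paper's. The paper argues directly: it sets $T=c\lambda_1$, cases on the fractional part $\beta=T-\lfloor T\rfloor$ (namely $\beta\in[\tfrac18,\tfrac78]$, $\beta\le\tfrac18$, $\beta\ge\tfrac78$), and in each case explicitly exhibits a window of offsets $\alpha$ for which $\lambda(\alpha)=\lambda_1+\alpha$ satisfies \eqref{eq.small.c}, using elementary estimates on $k\bigl(\tfrac{1}{\lambda_1}-\tfrac{1}{\lambda_1+\alpha}\bigr)$ and, in the middle case, a separate application of Proposition \ref{prop.good.blowup} to the resulting $\alpha$-window. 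It does \emph{not} invoke Lemma \ref{lem.real.intervals.1} or Lemma \ref{lem.real.intervals.2} inside this proof. You instead define the Diophantine bad set $B_c$, split on the size of $|c|$ rather than on the arithmetic of $c\lambda_1$, dispose of the tiny-$|c|$ regime by noting the slab $[c\lambda-3h\lambda,\,c\lambda+3h\lambda]$ lies in $(0,1)$ so contains no integer, and for larger $|c|$ re-import Lemmas \ref{lem.real.intervals.1}--\ref{lem.real.intervals.2} (already available for Proposition \ref{prop.elimiate.period}) to bound $\nu(B_c)$ together with a count of the components of $B_c$; a pigeonhole over one candidate $I_i^{\ast}$ per length-$1/\eta$ block, each supplied by Proposition \ref{prop.good.blowup}, then produces an untouched $I_i^{\ast}$. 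Your approach is more uniform and avoids the paper's three-way case analysis on $\beta$, at the cost of the extra component-counting bookkeeping and a slightly heavier dependency on constants (all three constraints you list are indeed compatible: $\alpha$ of order $d^{-2}$, $K_{12}\lesssim (2d-1)/d^{2}$, and $K_{11}$ a suitable multiple of $2d-1$, as your margin $\tfrac13-\alpha>0$ and the $O(1)$ overhead require); the paper's version, while more case-heavy, needs nothing beyond the elementary inequality \eqref{eq.alpha.slow.down} and Proposition \ref{prop.good.blowup}. Both give constants $K_{11},K_{12}$ of the same quality and depending only on $d$.
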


We now complete the proof of Proposition \ref{prop.elimiate.period} using
the claims above, and provide their proofs thereafter.

\bigskip

{\bf Step 1:}

\bigskip

First it is shown, 
using Lemma \ref{lem.real.intervals.1} and Lemma \ref{lem.real.intervals.2}, 
that there exists $\gs \in I_1$ such that for all pair of distinct nodes $i,j$ 
with not both $x_i, x_j$ in $\xv^c$, it holds that
\begin{equation}\label{eq.nodes.mod.lambda}
  \left|x_i-x_j+\frac{n}{\gs}\right| > (32d^4)^{-1}\frac{1}{\lambda_1}, \tab \mbox{for all } n\in \Z.
\end{equation}

Put 
$$ I^{-1}_1=\left[\frac{1}{\lambda_1+(d^2 2 \eta)^{-1}},\frac{1}{\lambda_1}\right],\tab \tilde{I}^{-1}_1= 
\lambda_1 I^{-1}_1=\left[\frac{\lambda_1}{\lambda_1+(d^2 2 \eta)^{-1}},1\right].$$

Fix any distinct indices $i,j$ such that not both $x_i,x_j$ are in $\xv^c$. Put $c_{i,j} = x_i-x_j$ and observe that under the cluster assumption
\begin{equation}\label{eq.cij}
  |c_{i,j}| \ge \eta.
\end{equation}
Put $I=\tilde{I}^{-1}_1$, $c = c_{i,j} \lambda_1$, $\epsilon = (32d^4)^{-1}$ and $\alpha = \frac{1}{d^2}$.
We now validate that under appropriate assumptions on the size of $\Omega$ 
we have that $I,c,\epsilon,\alpha$ satisfy the conditions
of Lemma \ref{lem.real.intervals.1}.
Put $a$ as the left end point of the interval $I$ then 
with $\Omega \ge \frac{2}{\eta d}$ we have that $a\ge \frac{1}{2}$. 
With $d\ge 2$ by assumption we have that $\epsilon = \frac{1}{32 d^4} < \frac{1}{100d^2}$.
With $\Omega \ge \frac{2}{\eta d}$ we have that
\begin{equation}\label{eq.length.I.1}
  |I|=|\tilde{I}^{-1}_1| \ge (4d^2 \eta \lambda_1)^{-1}.			
\end{equation}
Now with \eqref{eq.cij} and \eqref{eq.length.I.1} we have that $|c| \ge 8 \frac{\epsilon}{\alpha |I|}$.
Having validated the conditions of Lemma \ref{lem.real.intervals.1} hold for $I,c,\epsilon,\alpha$ we 
now invoke it and get that 
$$
  \nu \big( \left\{ t \in \tilde{I}^{-1}_1 : 
    \exists k\in \Z \mbox{ such that } \left| kt - c_{i,j}\lambda_1 \right|\le (32d^4)^{-1} \right\}\big) < \frac{1}{d^2} |\tilde{I}^{-1}_1|.
$$
Then
$$
  \nu \big( \left\{ t \in I^{-1}_1 : 
    \exists k\in \Z \mbox{ such that } \left| kt - c_{i,j}\right|\le (32d^4)^{-1}\frac{1}{\lambda_1} \right\}\big) < \frac{1}{d^2} |I^{-1}_1|.
$$
Now we apply Lemma \ref{lem.real.intervals.2} and conclude from the above that 
\begin{equation}\label{eq.appen.upper.3}
  \nu \big( \left\{ \lambda \in I_1 : 
    \exists k\in \Z \mbox{ such that } \left| \frac{k}{\lambda} - c_{i,j}\right|\le (32d^4)^{-1}\frac{1}{\lambda_1} \right\}\big) < \frac{2}{d^2} |I_1|.
\end{equation}
Define the set 
% $$E= \left\{ \lambda \in I_1 : 
%   \exists k\in \Z \; \exists i,j 
%   \mbox{ with not both }x_i,x_j \in x^c: \left| \frac{k}{\lambda} - c_{i,j}\right|\le (8d^4)^{-1}\frac{1}{\lambda_1} \right\},
% $$
% that is 
$$E=\bigcup_{\substack{1\le i<j\le d\\ \neg (x_i\in \xv^c \wedge x_j \in \xv^{c})}}\left\{ \lambda \in I_1 : 
  \exists k\in \Z \mbox{ such that } \left| \frac{k}{\lambda} - c_{i,j}\right|\le (32d^4)^{-1}\frac{1}{\lambda_1} \right\}.$$ 
Then using \eqref{eq.appen.upper.3} and the union bound 
\begin{equation}\label{eq.g.s}
  \nu \big(E\big) < \binom{d}{2}\frac{2}{d^2}|I_1|<|I_1|.
\end{equation}
We conclude from \eqref{eq.g.s} that there exists $\lambda^{*} \in I_1$ which satisfies \eqref{eq.nodes.mod.lambda}.

\bigskip

{\bf Step 2:}

\bigskip

Now we show that in fact $\lambda^{*}$ satisfies \eqref{eq.what.we.intend.to.show}, i.e. it satisfies the condition 
of Proposition \ref{prop.elimiate.period}. 

\smallskip

Let
$(\tilde{\pi},\tilde{\lv}) \in \Pi_d \times
(\Z^d\setminus\{\vec{0}\})$.  We will show that there exists
$\lambda_{\tilde{\pi},\tilde{\lv}} \in \Lambda(\xv)$ such that for all
$\pi \in \Pi_d$ and for all $\lv \in \Z^d$
\begin{equation}\label{eq.empty.intersection}
  \left(A^{\tilde{\pi}}_{R,\lambda^*}(F) + \frac{1}{\lambda^*} \tilde{\lv}\right) \cap
  \left(A^{\pi}_{R,\lambda_{\tilde{\pi},\tilde{\lv}}}(F) + \frac{1}{\lambda_{\tilde{\pi},\tilde{\lv}}} \lv\right) = \emptyset.
\end{equation}

Proposition \ref{prop.elimiate.period} will then follow
by Proposition \ref{prop.global.geo}.

\smallskip

We can assume without loss of generality that $\tilde{\pi}=id$. Accordingly
we put $A^{\tilde{\pi}}_{R,\lambda^*}(F) = A_{R,\lambda^*}(F)$ and we will
prove that there exists $\lambda_{\tilde{\lv}} \in \Lambda(\xv)$ such that for all $\pi \in \Pi_d$ and
for all $\lv \in \Z^d$
\begin{equation}\label{eq.empty.intersection.1}
  \left(A_{R,\lambda^*}(F) + \frac{1}{\lambda^*} \tilde{\lv}\right) \cap
  \left(A^{\pi}_{R,\lambda_{\tilde{\lv}}}(F) + \frac{1}{\lambda_{\tilde{\lv}}} \lv\right) = \emptyset.
\end{equation}

Fix $i$ such that $\tilde{\lv}_i \ne 0$ and set $n=\tilde{\lv}_i$. Assume that $x_i \in \xv^{c}$, 
and one can verify that the case where $x_i \in \xv\setminus\xv^c$ is proved using a similar argument
to the one that is given below.

In the cases considered below we will use the following fact about 
the ``radius'' of the set $A_{R,\lambda}(F)$ for each $\lambda \in \Lambda(\xv)$, 
established in Proposition \ref{prop.lipshitz}. 
For each $F'=(\av',\xv')\in A_{R,\lambda}(F)$ with $\xv'=(x'_1,\ldots,x'_d)$,
\begin{align}\label{eq.A.R.radious}
  \left|x_j' - x_j\right| & \le \tilde{C}_{1} \frac{1}{\Omega}(\Omega \tau h)^{-2p+2}R
                            \le h,& j=1,\ldots,d.
\end{align}

We consider the following mutually exclusive and collectively exhaustive cases: 

\bigskip 

{\bf Case 1:} $\frac{n}{\lambda^*}\le \frac{\eta}{6}$.

\bigskip

Put $c=\frac{n}{\lambda^*}$. Then under the assumption of this case and 
with $\Omega \ge \frac{d}{3h}$ we have
that $3h <|c| \le \frac{\eta}{6}$. We can therefore apply Proposition \ref{prop.small.c} for $c$
and (under appropriate further assumptions on $\Omega$) get that 
there exists an interval $I_2 \subset \Lambda(\xv)$ of
length $|I_2| = (2 d^2 \eta)^{-1}$, 
such that for all $\lambda \in I_2 $ and for all $k \in \Z$ it holds that 
\begin{equation}\label{eq.appen.case.1.cluster.property}
	\left|c-\frac{k}{\lambda}\right| = \left|\frac{n}{\lambda^*}-\frac{k}{\lambda}\right| > 3 h.
\end{equation}
Put
$$ I_2=[\lambda_2, \lambda_2 +(d^2 2 \eta)^{-1}], \tab I^{-1}_2=\left[\frac{1}{\lambda_2+(d^2 2
\eta)^{-1}},\frac{1}{\lambda_2}\right],\tab
\tilde{I}^{-1}_2= \lambda_2 I^{-1}_2.$$

Let $1 \le j \le d$ be any index such that $x_j \in \xv\setminus \xv^c$.
Put $c_j=(x_i + \frac{n}{\lambda^*} -x_j)$.
Then 
\begin{equation}\label{eq.appen.case.1.non.cluster}
	|c_j|=|x_i + \frac{n}{\lambda^*} -x_j| \ge |x_i-x_j| - \frac{n}{\lambda^*} \ge \eta -\frac{n}{\lambda^*}  
	\ge \eta -\frac{\eta}{6} \ge  \frac{5}{6}\eta,
\end{equation} 
where in the second inequality we used the fact that $x_j$ is a non-cluster node and in the third
inequality we used the assumption of case 1. 

Put $I=I^{-1}_2$, $c=c_j \lambda_2$, 
$\epsilon=2h\lambda_2$ and $\alpha=\frac{1}{2d}$. 
By \eqref{eq.appen.case.1.non.cluster} we have that $|c|\ge \frac{5}{6}\eta \lambda_2$.
Using the former, one can validate that there exists positive constants $C'(d),C''(d)$ such that if 
$\frac{C'(d)}{\eta}\le \Omega \le \frac{C''(d)}{h}$, then $I,c,\epsilon,\alpha$ meet the conditions of
Lemma \ref{lem.real.intervals.1}. We then invoke Lemma \ref{lem.real.intervals.1} and get that
$$
  \nu \big( \left\{ t \in \tilde{I}^{-1}_2 : 
    \exists k\in \Z \mbox{ such that } \left| kt - c_j\lambda_2 \right|\le 2h\lambda_2   \right\}\big)
    < \frac{1}{2d} |\tilde{I}^{-1}_2|.
$$ 
Then
$$
  \nu \big( \left\{ t \in I^{-1}_2 : 
    \exists k\in \Z \mbox{ such that } \left| kt - c_j \right|\le 2 h  \right\}\big)
    < \frac{1}{2d} |I^{-1}_2|.
$$ 
By the above and using Lemma \eqref{lem.real.intervals.2}
\begin{equation}\label{eq.appen.case.1.measure}
	\nu \big( \left\{ \lambda \in I_2 : 
    \exists k\in \Z \mbox{ such that } \left|\frac{k}{\lambda}-c_j\right|\le 2 h
    \right\}\big) < \frac{1}{d} |I_2|.
\end{equation}
Define the set
$$
  E=\bigcup_{\substack{1\le j\le d,\\ x_j\notin \xv^c}}\left\{ \lambda \in I_2: 
    \exists k\in \Z \mbox{ such that } \left| \frac{k}{\lambda} - c_j\right|\le 2h \right\}.
$$
Using the union bound and \eqref{eq.appen.case.1.measure}
\begin{equation}
  \nu(E) < |I_2|. 
\end{equation}
We conclude from the above that there exists $\lambda \in I_2$ such that for any non-cluster
node $x_j$ and for any $k \in \Z$
$$
	\left|x_i + \frac{n}{\lambda^*} -x_j-\frac{k}{\lambda}\right|> 2h.
$$ 
On the other hand we have that for all $k \in \Z$ (see \eqref{eq.appen.case.1.cluster.property})
$$\left|\frac{n}{\lambda^*}-\frac{k}{\lambda}\right| > 3 h.$$
  
Fix $\lambda_{\tilde{\lv}} = \lambda$. Then using the above, for any $\pi \in \Pi_d$ and any $k \in \Z$,
if $x_{\pi(i)}$ is a cluster node then
\begin{equation}\label{eq.small.n.cluster.to.cluster}
	\left|x_i+\frac{n}{\lambda^*} -
	x_{\pi(i)}-\frac{k}{\lambda_{\tilde{\lv}}}\right|\ge
	\left|\frac{n}{\lambda^*}-\frac{k}{\lambda_{\tilde{\lv}}}\right|-\left|x_i- x_{\pi(i)}\right| >3h-h=2h,
\end{equation}
and if $x_{\pi(i)}$ is a non-cluster node then
\begin{equation}\label{eq.small.n.cluster.to.non.cluster}
	\left|x_i+\frac{n}{\lambda^*} -
	x_{\pi(i)}-\frac{k}{\lambda_{\tilde{\lv}}}\right|> 2h.
\end{equation}

Now by combing \eqref{eq.A.R.radious}, \eqref{eq.small.n.cluster.to.cluster}
and \eqref{eq.small.n.cluster.to.non.cluster}, we get that $\lambda_{\tilde{\lv}}$ satisfies \eqref{eq.empty.intersection.1} .
This completes the proof of case 1.

\bigskip 

{\bf Case 2:} $\frac{n}{\lambda^*}> \frac{\eta}{6}$ and $\forall y \in \xv\setminus\xv^c  
: |x_i+\frac{n}{\lambda^*} -y| > \frac{\eta}{6}.$

\bigskip

We show that in this case there exists $\lambda \in I_1$ such that $\lambda_{\tilde{\lv}} = \lambda$ satisfies
\eqref{eq.empty.intersection.1}.

Put (as above)
$$ I^{-1}_1=\left[\frac{1}{\lambda_1+(d^2 2 \eta)^{-1}},\frac{1}{\lambda_1}\right],\tab \tilde{I}^{-1}_1= 
\lambda_1 I^{-1}_1=\left[\frac{\lambda_1}{\lambda_1+(d^2 2 \eta)^{-1}},1\right].$$
Put $I = \tilde{I}^{-1}_1$, $c=\frac{n}{\lambda^*}\lambda_1$, $\epsilon=3 h \lambda_1$ and $\alpha = \frac{1}{4}$.
By the assumptions of this case we have $\frac{n}{\lambda^*}> \frac{\eta}{6}$, then $c=\frac{n}{\lambda^*}\lambda_1 > \frac{\eta }{6}
\lambda_1$. Using the former, one can validate that there exist positive constants $C'(d),C''(d)$ such that if 
$\frac{C'(d)}{\eta}\le \Omega \le \frac{C''(d)}{h}$, then $I,c,\epsilon,\alpha$ meet the conditions of
Lemma \ref{lem.real.intervals.1}. We then invoke Lemma \ref{lem.real.intervals.1} and get that
$$
  \nu \big( \left\{ t \in \tilde{I}^{-1}_1 : 
    \exists k\in \Z \mbox{ such that } \left| kt - \frac{n}{\lambda^*}\lambda_1  \right|\le 3 h \lambda_1 \right\}\big)
    < \frac{1}{4} |\tilde{I}^{-1}_1|.
$$  
Then
$$
  \nu \big( \left\{ t \in I^{-1}_1 : 
    \exists k\in \Z \mbox{ such that } \left| kt - \frac{n}{\lambda^*} \right|\le 3 h  \right\}\big)
    < \frac{1}{4} |I^{-1}_1|.
$$ 
By the above and using Lemma \eqref{lem.real.intervals.2}
\begin{equation}\label{eq.apen.case.2.cluster.measure}
	\nu \big( \left\{ \lambda \in I_1 : 
    \exists k\in \Z \mbox{ such that } \left| \frac{k}{\lambda} - \frac{n}{\lambda^*} \right|\le 3 h \right\}\big) 
  	< \frac{1}{2} |I_1|.
\end{equation}

Now for any index $j$ such that $x_j$ is a non-cluster node put $c_j = x_i +\frac{n}{\lambda^*} - x_j$.
Put $I = \tilde{I}^{-1}_1$, $c=c_j\lambda_1$, $\epsilon=2 h \lambda_1$ and $\alpha = \frac{1}{4d}$.
Then by the assumptions of this case $|c|>\frac{\eta}{6}\lambda_1$ and with this 
one can validate that there exist positive constants $C'(d),C''(d)$ such that if 
$\frac{C'(d)}{\eta}\le \Omega \le \frac{C''(d)}{h}$, then $I,c,\epsilon,\alpha$ meet the conditions of
Lemma \ref{lem.real.intervals.1}. Invoking it 
and using Lemma \eqref{lem.real.intervals.2} we have that
\begin{equation}\label{eq.apen.case.2.non.cluster.measure}
	\nu \big( \left\{ \lambda \in I_1 : 
    \exists k\in \Z \mbox{ such that } \left| \frac{k}{\lambda} - c_j \right|\le 2 h \right\}\big) 
  < \frac{1}{2d} |I_1|.
\end{equation}
Define the set
$$
  E=\bigcup_{\substack{1\le j\le d,\\ x_j\notin \xv^c}}\left\{ \lambda \in I_1: 
    \exists k\in \Z \mbox{ such that } \left| \frac{k}{\lambda} - c_j\right|\le 2h \right\}.
$$
Using the union bound and \eqref{eq.apen.case.2.non.cluster.measure}
\begin{equation}\label{eq.appen.non.cluster.nodes.tool}
  \nu(E) < \frac{1}{2} |I_1|. 
\end{equation}

Now combing \eqref{eq.apen.case.2.cluster.measure} and \eqref{eq.appen.non.cluster.nodes.tool}
we get that there exists $\lambda \in I_1$ such that for all $k \in \Z$
\begin{align*}
	 \left| \frac{k}{\lambda} - \frac{n}{\lambda^*} \right|&> 3 h, &\\
	  \left|x_i +\frac{n}{\lambda^*} -x_j - \frac{k}{\lambda}\right|&>2h, & \forall x_j \in \xv\setminus \xv^c.
\end{align*} 
Finally setting $\lambda_{\tilde{\lv}}=\lambda$ we get from the above and \eqref{eq.A.R.radious}
that $\lambda_{\tilde{\lv}}$ satisfies \eqref{eq.empty.intersection.1}.    
 
\bigskip 

{\bf Case 3:} $\frac{n}{\lambda^*}> \frac{\eta}{6}$ and $\exists y \in \xv\setminus \xv^c  
: |x_i+\frac{n}{\lambda^*}-y| \le \frac{\eta}{6}.$

\bigskip

First we note that 
since the non-cluster nodes are each separated from any other node 
by at least $\eta$, there can be at most one node 
$y \in \xv\setminus \xv^c$ such that $|x_i+\frac{n}{\lambda^*} - y| \le \frac{\eta}{6}$.    
Therefore let $j$ be the index of the non-cluster node for which we have $|x_i+\frac{n}{\lambda^*} - x_j| \le \frac{\eta}{6}$. 
By the choice of $\lambda^*$ we also have that $|x_i+\frac{n}{\lambda^*} - x_j| > (32d^4)^{-1}\frac{1}{\lambda_1}$ (see \eqref{eq.nodes.mod.lambda}). 
We conclude that 
$$(32d^4)^{-1}\frac{1}{\lambda_1} \le |x_i+\frac{n}{\lambda^*} - x_j|\le  \frac{\eta}{6},$$
and for $\Omega \le \frac{1}{96d^3 h}$ we then have that 
$$3h < |x_i+\frac{n}{\lambda^*} - x_j|\le  \frac{\eta}{6}.$$

We now invoke Proposition \ref{prop.small.c} and get that there exists an interval $I_3 \in \Lambda(\xv)$ 
of length $|I_3| = (2 d^2 \eta)^{-1}$ such that for all $\lambda \in I_3$ and for all $k \in \Z$
\begin{equation}\label{eq.I.2.property}
	\left|x_i+\frac{n}{\lambda^*} - x_j -\frac{k}{\lambda}\right| > 3 h.
\end{equation}	

Put 
$$ I_3=[\lambda_3,\lambda_3+(2 d^2 \eta)^{-1}],\tab I^{-1}_3=\left[\frac{1}{\lambda_3+(d^2 2 \eta)^{-1}},\frac{1}{\lambda_3}\right],\tab \tilde{I}^{-1}_3= 
\lambda_3 I^{-1}_3.$$

For each index $1 \le \ell \le d, \ell \ne j$  
put $c_{\ell} = x_i+\frac{n}{\lambda^*} - x_{\ell}$ 
and note that 
$$|c_{\ell}| = |x_i+\frac{n}{\lambda^*} -x_j +x_j - x_{\ell}|\ge |x_j-x_{\ell}| - |x_i+\frac{n}{\lambda^*} - x_j| \ge  \frac{5}{6}\eta.$$ 
Put
$I=\tilde{I}_3^{-1}$, $c=c_{\ell} \lambda_3$, $\epsilon = 2h\lambda_3$ and $\alpha = \frac{1}{2d}$.
Then with the above $|c|\ge  \frac{5}{6}\eta \lambda_3$
and then following similar computations as in the previous cases (see cases 1,2), one can validate that
$I,c,\epsilon,\alpha$ meet the conditions of Lemma \ref{lem.real.intervals.1} for 
$\frac{C'}{\eta} \le \Omega \le \frac{C''}{h}$ where $C',C''$ are constants depending only on $d$.
Invoking Lemma \ref{lem.real.intervals.1}   
with $I,c,\epsilon,\alpha$ we get that
$$
  \nu \big( \left\{ t \in \tilde{I}^{-1}_3 : 
    \exists k\in \Z \mbox{ such that } \left| kt - c_{\ell}\lambda_3 \right|\le 2 h \lambda_3 \right\}\big)
    < \frac{1}{2d} |\tilde{I}^{-1}_3|.
$$  
Then
$$
  \nu \big( \left\{ t \in I^{-1}_3 : 
    \exists k\in \Z \mbox{ such that } \left| kt - c_{\ell} \right|\le 2 h  \right\}\big)
    < \frac{1}{2d} |I^{-1}_3|.
$$ 
By the above and using Lemma \eqref{lem.real.intervals.2}
\begin{equation}\label{eq.apen.case.3.all.nodes.measure}
	\nu \big( \left\{ \lambda \in I_3 : 
    \exists k\in \Z \mbox{ such that } \left|\frac{k}{\lambda}-c_{\ell}\right|\le 2 h
    \right\}\big) < \frac{1}{d} |I_3|.
\end{equation}

Define the set
$$
  E=\bigcup_{\substack{1\le \ell \le d,\ \ell \ne j}}\left\{ \lambda \in I_3: 
    \exists k\in \Z \mbox{ such that } \left| \frac{k}{\lambda} - c_{\ell}\right|\le 2h \right\}.
$$
Using the union bound and \eqref{eq.apen.case.3.all.nodes.measure}
$$
  \nu(E) < |I_3|. 
$$
We conclude from the above that there exists $\lambda \in I_3$ such that
for all $k \in \Z$ and for any index $1 \le \ell \le d, \ell\ne j$,
\begin{equation}\label{eq.appen.case.3.all.but.one}
	\left|x_i+\frac{n}{\lambda^*}-x_{\ell} -\frac{k}{\lambda}\right|>2h.
\end{equation}
Put $\lambda_{\tilde{\lv}}=\lambda$. 
Recall that $I_3$ satisfies \eqref{eq.I.2.property}. 
Then with \eqref{eq.I.2.property} and \eqref{eq.appen.case.3.all.but.one} 
$\lambda_{\tilde{\lv}}$ satisfies that for
all $k \in \Z$ and for any index $1 \le \ell \le d$ $$
	\left|x_i+\frac{n}{\lambda^*}-x_{\ell} -\frac{k}{\lambda_{\tilde{\lv}}}\right|>2h.
$$    
Using the above and \eqref{eq.A.R.radious} we get that 
that $\lambda_{\tilde{\lv}}$ satisfies
\eqref{eq.empty.intersection.1}. \qed

\bigskip

We now prove the intermediate claims: Lemma \ref{lem.real.intervals.1}, Lemma \ref{lem.real.intervals.2} and
Proposition \ref{prop.small.c}.  	

\smallskip

\begin{proof}[Proof of Lemma \ref{lem.real.intervals.1}]
  Let $a,\epsilon,\alpha,c$ and $I=[a,1]$ as specified in Lemma \ref{lem.real.intervals.1}.
  Without loss of generality we assume that $c>0$, consequently it is sufficient
  to prove that 
  $$					
  \nu \big( \left\{x\in I : 
    \exists k\in \mathbb{N} \mbox{ such that } \left| kx - c\right|\le \epsilon \right\}\big) 
  <\alpha |I|.
  $$
  If $0 < c < 2 $ then one can verify that 
  $$
  \nu \big( \left\{x\in I :\exists k\in \mathbb{N} \mbox{ such that } \left| kx - c\right|\le \epsilon \right\}\big)
  \le 2\epsilon.
  $$ 
  Then under this condition and with the assumption that $c \ge 8 \frac{\epsilon}{\alpha |I|}$,
  we have that $2\epsilon < \alpha |I|$, therefore
  $$
  \nu \big( \left\{x\in I :\exists k\in \mathbb{N} \mbox{ such that } \left| kx - c\right|\le \epsilon \right\}\big)
  \le 2\epsilon < \alpha |I|.
  $$
  We now prove the case $c\ge 2$.
  
  \smallskip 
  
  Let $N \in \mathbb{N}$ be the unique integer such that
  \begin{equation}\label{eq.lemma.intervals.1.1}
    \frac{c}{\lfloor c \rfloor + N}\le a < \frac{c}{\lfloor c \rfloor + N-1}.
  \end{equation}
  Then 
  \begin{align}\label{eq.lemma.intervals.1.2}
    \nu \big( \left\{x\in I : 
    \exists k\in \Z \mbox{ such that } \left| kx - c\right|\le \epsilon \right\}\big) \le 
    \sum_{k=0}^{N} \frac{2\epsilon}{\lfloor c \rfloor +k} =
    2\epsilon \sum_{k=0}^{N} \frac{1}{\lfloor c \rfloor +k}.  				
  \end{align}
  If $N\le 2$ then with $c \ge 8 \frac{\epsilon}{\alpha |I|}$ 	
  \begin{align*}
    2\epsilon \sum_{k=0}^{N} \frac{1}{\lfloor c \rfloor +k} \le 2\epsilon \sum_{k=0}^{2} \frac{1}{\lfloor c \rfloor +k}
    < 8\frac{\epsilon}{c}\le \alpha |I|.  				
  \end{align*}
  Combining \eqref{eq.lemma.intervals.1.2} with the above proves the claim for this case.
  
  \smallskip
  
  We are left to prove the case $N \ge 3, c\ge 2$.
  
  \smallskip
  
  For $H_n$ the $n^{th}$ partial sum of the Harmonic series we have that
  $$ \log(n) + \gamma < H_n < \log(n+1) + \gamma,$$
  where $\log$ is the base $2$ logarithm.
  Then 
  \begin{align}\label{eq.lemma.intervals.1.3}
    \begin{split}
      2\epsilon \sum_{k=0}^{N} \frac{1}{\lfloor c \rfloor +k} & \le 
      2\epsilon\left( \log(\lfloor c \rfloor +N + 1) - \log(\lfloor c \rfloor-1)\right) \\
      & = 2\epsilon \log\left(\frac{\lfloor c \rfloor +N + 1}{\lfloor c \rfloor-1}\right)\\
      & = 2\epsilon \log\left(1+\frac{N+2}{\lfloor c \rfloor-1}\right).		
    \end{split}
  \end{align}
  Using \eqref{eq.lemma.intervals.1.1} and since by assumption $a\ge \frac{1}{2}$ we
  have that 
  \begin{equation}\label{eq.lemma.intervals.1.4}
    N \le \lfloor c \rfloor + 2.				
  \end{equation}
  Then by \eqref{eq.lemma.intervals.1.1} and \eqref{eq.lemma.intervals.1.4} (and assuming $N\ge 3$, $c\ge 2$)
  \begin{equation}\label{eq.lemma.intervals.1.5}
    |I| = 1-a \ge \frac{N-2}{\lfloor c \rfloor + N-1}\ge \frac{N-2}{2\lfloor c \rfloor+1} \ge 
    \frac{1}{5}\frac{(N+2)}{2\lfloor c \rfloor+1} \ge \frac{1}{25}\frac{(N+2)}{\lfloor c \rfloor-1}.				
  \end{equation}
  Inserting \eqref{eq.lemma.intervals.1.5} into \eqref{eq.lemma.intervals.1.3} 	
  and using the assumption that $100\epsilon \le \alpha$
  \begin{align}\label{eq.lemma.intervals.1.6}
    \begin{split}
      2\epsilon \log\left(1+\frac{N+2}{\lfloor c \rfloor-1}\right) & \le
      2\epsilon \log\left(1+25|I|\right) \\ 					
      & = 2\epsilon \log(e)\ln\left(1+25|I|\right)\\
      & < 100\epsilon|I|\\
      & \le \alpha |I|,
    \end{split}
  \end{align} 
  which then proves the claim using \eqref{eq.lemma.intervals.1.2} and \eqref{eq.lemma.intervals.1.3}.
  
  This completes the proof of Lemma \ref{lem.real.intervals.1}. 
\end{proof}

\begin{proof}[Proof of Lemma \ref{lem.real.intervals.2}]
  For any sub-interval $[c,d] \subseteq I$ we have that
  \begin{equation}
    \frac{\nu\left([c,d]\right)}{\nu(I)}=\frac{d-c}{b-a}=\frac{cd}{ab}\frac{\frac{1}{c}-\frac{1}{d}}{\frac{1}{a}-\frac{1}{b}}\le \frac{b}{a}
    \frac{\nu(\left[\frac{1}{d},\frac{1}{c}\right])}{\nu(I^{-1})}.
  \end{equation}
  Using the above
  $$\frac{\nu(S)}{\nu(I)} = \sum_{i}\frac{\nu([a_i,b_i])}{\nu(I)} \le 
  \frac{b}{a} \sum_{i}\frac{\nu([\frac{1}{b_i},\frac{1}{a_i}])}{\nu(I^{-1})}
  =\frac{b}{a}\nu(S^{-1}).$$

  This completes the proof of Lemma \ref{lem.real.intervals.2}.
\end{proof}	

\begin{proof}[Proof of Proposition \ref{prop.small.c}]
	Without loss of generality assume that $c>0$ and put $T=c\lambda_1$.
	
	We will use the following inequality repeatably below. 
	For each $k \ge 0$ and $0\le \alpha \le \lambda_1$ we have 
	\begin{equation}\label{eq.alpha.slow.down}
		\frac{k\alpha}{2\lambda_1^2}\le k\left(\frac{1}{\lambda_1}-\frac{1}{\lambda_1+\alpha}\right)\le \frac{k\alpha}{\lambda_1^2}.
	\end{equation}
	
	Put $\beta = T-\lfloor T \rfloor$ and consider the following cases: 
	
	\smallskip
	
	{\bfseries Case 1}: $\frac{1}{8} \le \beta \le \frac{7}{8}$.
	
	We show that in this case $I=I_1 \subset \Lambda(\xv)$
        satisfies \eqref{eq.small.c} provided that
        $\Omega h < \frac{d}{96}$ and $\Omega \ge \frac{4}{d\eta}$. To
        see this recall that
        $I_1 = [\lambda_1,\lambda_1+(2d^2\eta)^{-1}]$.  Put
        $\lambda(\alpha)=\lambda_1 + \alpha$,
        $0 \le \alpha \le (2d^2\eta)^{-1}$.
        We have that for each integer $k \le \lfloor T \rfloor$
	$$\left|c- \frac{k}{\lambda(\alpha)} \right|=\frac{T}{\lambda_1}- \frac{k}{\lambda(\alpha)}\ge \frac{\beta}{\lambda_1}
	\ge \frac{1}{8\lambda_1}.$$ 
	On the other hand, for each integer $k \ge \lceil T \rceil$ 
	\begin{align}\label{eq.large.T}
		\begin{split}
			\left|c- \frac{k}{\lambda(\alpha)} \right| &\ge \frac{k-T}{\lambda_1}-k\left(\frac{1}{\lambda_1}-
			\frac{1}{\lambda(\alpha)}\right)\\ &\ge  \frac{k-T}{\lambda_1} - \frac{k\alpha}{\lambda_1^2}=
			(k-T)\left(\frac{1}{\lambda_1}-\frac{\alpha}{\lambda_1^2}\right)-\frac{T\alpha}{\lambda_1^2}\\
			&\ge (1-\beta) \left(\frac{1}{\lambda_1}-\frac{\alpha}{\lambda_1^2}\right) -\frac{T\alpha}{\lambda_1^2} \\
			&\ge\frac{1}{8}\left(\frac{1}{\lambda_1}-\frac{\alpha}{\lambda_1^2}\right) -\frac{T\alpha}{\lambda_1^2},
		\end{split}
	\end{align}
	where in the second inequality we used \eqref{eq.alpha.slow.down}. Using 
	$\Omega \ge \frac{4}{d\eta} \Rightarrow \frac{\alpha}{\lambda_1} \le \frac{1}{2}$,
	$\frac{T}{\lambda_1} \le \frac{\eta}{6}$ and $\Omega h < \frac{d}{96}$ we have that
	$$
		\frac{1}{8}\left(\frac{1}{\lambda_1}-\frac{\alpha}{\lambda_1^2}\right) -\frac{T\alpha}{\lambda_1^2}
		\ge \frac{1}{16 \lambda_1}-\frac{1}{32 \lambda_1}=\frac{1}{32 \lambda_1} > 3h.
	$$
	We conclude from the above that for $\frac{1}{8} \le \beta \le
	\frac{7}{8}$ (and under the assumptions on $\Omega$ and $\Omega h$) $I=I_1 \subset \Lambda(\xv)$ satisfies
	\eqref{eq.small.c}.
	
	\smallskip
	
	{\bfseries Case 2}: $\beta \le \frac{1}{8}$.
	
	First if $\lfloor T \rfloor = 0 $ we show that $I=I_1 \subset \Lambda(\xv)$ satisfies \eqref{eq.small.c}
	for $\Omega h \le \frac{d}{8}$.
	For $k=0$
	$$\left|c-\frac{k}{\lambda}\right|=c > 3h.$$
	For $k>0$ and $\lambda \in I_1$   
	$$\left|c-\frac{k}{\lambda}\right| = \left|\frac{\beta}{\lambda_1}-\frac{k}{\lambda}\right|\ge 
	\frac{1}{\lambda} - \frac{\beta}{\lambda_1} \ge \frac{1}{2\lambda_1} -
	\frac{1}{8\lambda_1}=\frac{3}{8\lambda_1}>3h,$$
	where in the last inequality we used the assumption that $\Omega h \le \frac{d}{8}$. 
	
	Now assume that $\lfloor T \rfloor > 0 $ and consider the next inequalities
	\begin{align}
		T \left(\frac{1}{\lambda_1}-\frac{1}{\lambda(\alpha)}\right)&> 3h\label{eq.T.alpha.ineq.1}, \\
		\lfloor T\rfloor \left(\frac{1}{\lambda_1}-\frac{1}{\lambda(\alpha)}\right)& < \frac{1}{4
		\lambda_1}\label{eq.T.alpha.ineq.2}.
	\end{align}
	We show that if for $0\le \alpha \le \lambda_1$, $\lambda(\alpha)$ satisfies both \eqref{eq.T.alpha.ineq.1} and \eqref{eq.T.alpha.ineq.2}
	then  $\lambda(\alpha)$ satisfies \eqref{eq.small.c}, provided that $\Omega h \le \frac{d}{24}$.
	
	For any integer $k \le \lfloor T \rfloor$ we have using \eqref{eq.T.alpha.ineq.1} that
	$$ \frac{T}{\lambda_1}-\frac{k}{\lambda(\alpha)} \ge T \left(\frac{1}{\lambda_1}-\frac{1}{\lambda(\alpha)}\right) > 3h.$$
	For any integer $k > \lfloor T \rfloor$  
	\begin{align*}
		\frac{k}{\lambda(\alpha)}-\frac{T}{\lambda_1}&\ge  \frac{\lfloor T
		\rfloor}{\lambda(\alpha)}-\frac{T}{\lambda_1} + \frac{1}{\lambda(\alpha)}\\
			&\ge -\lfloor T\rfloor\left(\frac{1}{\lambda_1}-\frac{1}{\lambda(\alpha)}\right)
				-\frac{\beta}{\lambda_1}+\frac{1}{\lambda(\alpha)}\\
			&> -\frac{1}{4\lambda_1} -\frac{\beta}{\lambda_1} +\frac{1}{\lambda(\alpha)}\\
			&\ge -\frac{3}{8\lambda_1} +\frac{1}{2\lambda_1}\\
			&\ge \frac{1}{8\lambda_1}\\
			&\ge 3h,
	\end{align*}
	where in the $3^{rd}$ inequality we used \eqref{eq.T.alpha.ineq.2}, in the $4^{th}$ inequality
	we used both $\beta \le \frac{1}{8}$ and $0\le \alpha \le \lambda_1$, and 
	in last inequality we used $\Omega h \le \frac{d}{24}$.
	
	We then conclude that when $\Omega h$ is small enough, each
        $\lambda(\alpha)$ with $0\le \alpha \le \lambda_1$ which
        satisfies both \eqref{eq.T.alpha.ineq.1} and
        \eqref{eq.T.alpha.ineq.2} satisfies \eqref{eq.small.c}.  We
        now solve \eqref{eq.T.alpha.ineq.1} and
        \eqref{eq.T.alpha.ineq.2} for $\alpha$.  By
        \eqref{eq.alpha.slow.down}
        $\frac{T\alpha}{2\lambda_1^2} > 3h \Rightarrow T
        \left(\frac{1}{\lambda_1}-\frac{1}{\lambda(\alpha)}\right)> 3h
        $, then each $0\le \alpha \le \lambda_1$ such that
	$$\alpha > \frac{6\lambda_1^2 h}{T}$$
	satisfies \eqref{eq.T.alpha.ineq.1}.
	By \eqref{eq.alpha.slow.down} $\frac{\lfloor T \rfloor \alpha}{\lambda_1^2} < \frac{1}{4\lambda_1} \Rightarrow
	\lfloor T \rfloor \left(\frac{1}{\lambda_1}-\frac{1}{\lambda(\alpha)}\right) < \frac{1}{4\lambda_1}$,
	then each $0\le \alpha \le \lambda_1$ such that
	$$ \alpha < \frac{\lambda_1}{4 \lfloor T \rfloor},$$
	satisfies \eqref{eq.T.alpha.ineq.2}.	
	
	We conclude from the above that for 
	$$\alpha \in \left(\frac{6\lambda_1^2 h}{T},\frac{\lambda_1}{4 \lfloor T \rfloor}\right)= I_3,$$
	$\lambda(\alpha)$ satisfies \eqref{eq.small.c}.
					
	Now we recall that by Proposition \ref{prop.good.blowup}, every interval $I' \subset
	\left[\frac{1}{2}\frac{\Omega}{2d-1},\frac{\Omega}{2d-1}\right]$ of size $\frac{1}{\eta}$ contains a sub-interval $I$ 
	of size $(2d^2\eta)^{-1}$ such that $I \subset \Lambda(\xv)$.
	Put $I_4 = \lambda_1 + I_3$ and $I_5 = I_4 \cap \left[\frac{1}{2}\frac{\Omega}{2d-1},\frac{\Omega}{2d-1}\right]$. We
	will now validate that $|I_5| > \frac{1}{\eta}$ for $\Omega h < \frac{d}{72}$. 
	To prove that we show that 
	$$\lambda_1 + \frac{6\lambda_1^2 h}{T} + \frac{1}{\eta} < \min\left(\lambda_1+\frac{\lambda_1}{4 \lfloor T
	\rfloor},\frac{\Omega}{2d-1}\right).$$ 
	First we show that $\lambda_1 + \frac{6\lambda_1^2 h}{T} + \frac{1}{\eta} < \lambda_1+\frac{\lambda_1}{4 \lfloor T
	\rfloor}$:
	$$ \frac{\lambda_1}{4 \lfloor T \rfloor} - \frac{6\lambda_1^2
	h}{T}\ge \frac{\lambda_1}{T}\left(\frac{1}{4}-6\lambda_1h\right)\ge 
			\frac{6}{\eta} \left(\frac{1}{4}-6\lambda_1h\right)>\frac{1}{\eta},$$
	where in the penultimate inequality we used the proposition assumption that $\frac{\eta}{6}\ge
	c=\frac{T}{\lambda_1}$ and in the last inequality we used $\Omega h < \frac{d}{72}$.
	Next we show that $\lambda_1 + \frac{6\lambda_1^2 h}{T} + \frac{1}{\eta} < \frac{\Omega}{2d-1}$
	for $\Omega > \frac{5(2d-1)}{\eta}$ and $\Omega h < \frac{d}{72}$:
	\begin{equation*}
		\begin{split}
		\lambda_1 + \frac{6\lambda_1^2 h}{T} + \frac{1}{\eta}\le
		\lambda_1\left(1+6\lambda_1 h\right)+ \frac{1}{\eta}\le \frac{13}{12}\lambda_1+\frac{1}{\eta}\\
		\le \frac{13}{12}\left(\frac{\Omega}{2(2d-1)}+\frac{1}{\eta}\right)+\frac{1}{\eta}<\frac{\Omega}{2d-1}.
		\end{split}
	\end{equation*}
	We conclude that $|I_5| > \frac{1}{\eta}$ and $I_5 \subset
	\left[\frac{1}{2}\frac{\Omega}{2d-1},\frac{\Omega}{2d-1}\right]$ then by Proposition 
	\ref{prop.good.blowup} $I_5$ contains a sub-interval $I$ of size $(2d^2\eta)^{-1}$ such that $I \subset \Lambda(\xv)$.
	Since by construction $I_5$ satisfies \eqref{eq.small.c} this completes the proof of the
	case $\beta \le \frac{1}{8}$ of Proposition \eqref{prop.small.c}.
	
	\smallskip
	
	We are left to prove the case $\frac{7}{8} \le \beta$. This case is proved similarly to the 
	case $\beta \le \frac{1}{8}$. We therefore omit the proof of this case.
\end{proof}

\end{document}